      \OR\ifentrytype{incollection}\OR\ifentrytype{inproceedings}%
      \OR\ifentrytype{inreference}} {\printtext[title]{%
\definecolor{darkblue}{rgb}{0.13,0.13,0.39}%{0.03,0.03,0.265}
\newtheorem{thm}{Theorem}[section] 
\newtheorem{lem}[thm]{Lemma}
\newtheorem{prop}[thm]{Proposition} 
\theoremstyle{definition}  \newtheorem*{rem*}{Remark}
 \newcounter{assum}
\newcommand{\I}{{\rm i}} \newcommand{\pp}{\mathbb{P}} 
  \newcommand{\rr}{\mathbb{R}}
\newcommand{\nn}{\mathbb{N}} \newcommand{\zz}{\mathbb{Z}} \newcommand{\aip}{\mathcal{A}_2}
\newcommand{\p}{\partial}
\newcommand{\uno}[1]{\mathbf{1}_{#1}}
\newcommand{\vs}{\vspace{6pt}}
\newcommand{\wt}{\widetilde}
\newcommand{\K}{{\sf K}_{\Ai}}  
\newcommand{\qand}{\quad\text{and}\quad}
\newcommand{\qqand}{\qquad\text{and}\qquad}
\newcommand{\ts}{\hspace{0.1em}}
\newcommand{\tsm}{\hspace{-0.1em}}
\newcommand{\ttsm}{\hspace{-0.05em}}
\DeclareMathOperator{\Ai}{Ai}
\DeclareMathOperator{\tr}{tr}
\newcommand{\inv}[1]{\frac{1}{#1}}
\newcommand{\KGN}{{\sf K}_{{\rm Herm},N}}
\newcommand{\wtKGN}{{\wt {\sf K}}_{{\rm Herm},N}}
\newcommand{\KLN}{{\sf K}_{{\rm Lague},N}}
\newcommand{\wtKLN}{{\wt {\sf K}}_{{\rm Lague},N}}
\DeclareMathOperator*{\argmax}{argmax}
\def\dash---{\kern.16667em---\penalty\exhyphenpenalty\hskip.16667em\relax}
\numberwithin{equation}{section}
\let\oldmarginpar\marginpar
\renewcommand\marginpar[1]{\-\oldmarginpar[\raggedleft\footnotesize #1]%
  {\raggedright{\small\textsf{#1}}}}
\begin{document}

\title{Non-intersecting Brownian bridges and the Laguerre Orthogonal Ensemble}

\author[G.~B.~Nguyen]{Gia Bao Nguyen} 
\address{G.~B.~Nguyen\\
  Centro de Modelamiento Matem\'atico\\
  Universidad de Chile\\
  Av. Beauchef 851, Torre Norte\\
  Santiago\\
  Chile} \email{bnguyen@dim.uchile.cl}

\author[D.~Remenik]{Daniel Remenik}
\address{D.~Remenik\\
  Departamento de Ingenier\'ia Matem\'atica and Centro de Modelamiento Matem\'atico\\
  Universidad de Chile\\
  Av. Beauchef 851, Torre Norte\\
  Santiago\\
  Chile} \email{dremenik@dim.uchile.cl}

\maketitle

\begin{abstract}
  We show that the squared maximal height of the top path among $N$ non-intersecting Brownian bridges starting and ending at the origin is distributed as the top eigenvalue of a random matrix drawn from the Laguerre Orthogonal Ensemble. 
  This result can be thought of as a discrete version of K. Johansson's result that the supremum of the Airy$_2$ process minus a parabola has the Tracy-Widom GOE distribution, and as such it provides an explanation for how this distribution arises in models belonging to the KPZ universality class with flat initial data.
  The result can be recast in terms of the probability that the top curve of the stationary Dyson Brownian motion hits an hyperbolic cosine barrier.
\end{abstract}

\section{Introduction and Main Results}\label{sec:intro}

\subsection{Motivation and background}\label{sub:motiv}

The \emph{Kardar-Parisi-Zhang (KPZ) universality class} describes a broad collection of models, including stochastic interface growth on a one-dimensional substrate, polymer chains directed in one dimension and fluctuating transversally in the other due to a random potential, driven lattice gas models,
reaction-diffusion models in two-dimensional random media, and randomly forced Hamilton-Jacobi equations.
Although there is no precise definition of the KPZ universality class, it can be identified at the roughest level by its unusual $t^{1/3}$ scale of fluctuations (decorrelating at a spatial scale of $t^{2/3}$).
The asymptotic distribution of the fluctuations, in the long time limit $t\to \infty$, is conjectured to depend only on the initial (or boundary) condition imposed on each particular model.

There are three special classes of initial data which stand out because of their scale invariance, usually referred to as \emph{curved}, \emph{flat} and \emph{stationary}.
Based on exact computations for a few models which enjoy a special determinantal structure, the distribution of the asymptotic fluctuations in these three cases is known explicity.
One of the most intriguing aspects of the KPZ universality class is that these limiting fluctuations are given in terms of objects coming from random matrix theory (RMT).
This is particularly evident in the cases of curved and flat initial data: the asymptotic fluctuations are given, respectively, by the Tracy-Widom GUE and GOE distributions \cite{tracyWidom,tracyWidom2}.
The first of these two distributions describes the asymptotic fluctuations of the largest eigenvalue of a random Hermitian matrix with Gaussian entries (the Gaussian Unitary Ensemble), while the second one is the analog in the real symmetric case (the Gaussian Orthogonal Ensemble); both will be introduced explicitly later on.
For more background on this aspect of the KPZ universality class we refer the reader to the reviews \cite{corwinReview,quastelRem-review}; for some other perspectives we refer additionally to \cite{quastelCDM,borodinPetrovReview,quastelSpohn}.

It is very natural in this context to wonder about what lies behind the connection between the KPZ class and RMT.
Perhaps the most basic relationship one may seek is to find a model which lies in the KPZ universality class and which, at the same time, is naturally expressed as an object in RMT.
As it turns out, in the case of the GUE (corresponding to curved initial data in the KPZ class) this can be achieved by considering a simple model: non-intersecting Brownian bridges (which we will introduce in detail in Section \ref{sec:nibm}).
This model is, on the one hand, one of the simplest and most studied models belonging to the KPZ class, while on the other hand it is equivalent to Dyson Brownian motion, a process which describes the evolution of the eigenvalues of a GUE matrix whose entries undergo independent Ornstein-Uhlenbeck diffusions.
A straightforward consequence of this equivalence is that the positions of the $N$ non-intersecting Brownian paths at a single time are distributed as the eigenvalues of a GUE matrix of size $N$, and this leads directly to analog statements about their asymptotic fluctuations.
Interestingly, the scope of this relationship extends also to looking at the entire paths of these processes.
For instance, if one scales the top path of Dyson Brownian motion (or non-intersecting Brownian bridges) appropriately, then in the limit one obtains the Airy$_2$ process, which is known to describe the spatial fluctuations of models in the KPZ class with curved initial data.
Beyond the basic relationship which we have just described, more recent developments in the area known as integrable probability have led to other, arguably deeper, ways of understanding the connection (see for instance \cite{borodinPetrovReview,borodinGorinBetaCorners}).

The situation in the case of GOE, which corresponds to flat initial data in the KPZ class, is much less clear.
In fact, essentially no results are available, and it has been a question of interest for several years now, both for probabilists and for physicists, to understand whether a relationship similar to the one available for the GUE case is available for GOE, or whether the appearance of the Tracy-Widom GOE distribution in the KPZ class is not much more than a coincidence.

The fact that the GOE/flat link is much more difficult to understand is actually not surprising given that, as it is widely accepted, for most (if not all) models both in the KPZ class and in RMT, the GOE/flat case is considerably more difficult to analyze than the GUE/curved one.
This is because many aspects of the integrability of these models which are present in the latter case, and lead to relatively simple exact formulas, are lost in the former one.
It should be noted moreover that, in a certain sense, the GOE/flat connection is necessarily more tenuous than the GUE/curved one.
In fact, if one considers the GOE version of Dyson Brownian motion then it is natural to expect (as conjectured in \cite{borFerPrahSasam}), by analogy with the GUE case, that the top path would converge, under appropriate scaling, to the Airy$_1$ process, which is the analog of the Airy$_2$ process for models in the KPZ class with flat initial data.
Nevertheless, \cite{bornemannFerrariSpohnAiry1} provided convincing numerical evidence showing that this is not the case.

The main goal of this article is to provide an explanation of how the GOE/flat link arises.
We will achieve this by considering the model of non-intersecting Brownian bridges mentioned above but focusing now on a different quantity, namely the maximal height attained by the top path.
Our main result will show that the distribution of the maximal height coincides with that of the largest singular value of a large rectangular matrix with Gaussian entries, or in other words, with the square root of the largest eigenvalue of a matrix from the Laguerre Orthogonal Ensemble, i.e. a real Wishart matrix.
We remark that this identity will be established at the pre-asymptotic level (that is, for a finite number of paths and for a finite matrix), which is interesting in itself as we will explain in Section \ref{sub:the_airy_2_process_and_goe}.
The connection with the GOE is established through the known RMT fact that the square root of the top eigenvalue of a real Wishart matrix converges under the right scaling to a Tracy-Widom GOE random variable.
The way in which this result fits into the context of the KPZ universality class with flat initial data can be understood in terms of certain variational problems, and will be explained in Section \ref{sub:main_results}.

In the next two subsections we will change a bit our perspective to focus in more detail on the model of non-intersecting Brownian bridges, as well as on the Airy$_2$ process and on some previous results which relate it to the Tracy-Widom GOE distribution.

\subsection{Non-intersecting Brownian bridges}\label{sec:nibm}

The model of \emph{non-intersecting Brownian bridges} corresponds to considering a collection of $N$ Brownian bridges $(B_1(t),\dotsc,B_N(t))$, all starting from zero at time $t=0$ and ending at zero at time $t=1$, and conditioning them (in the sense of Doob) to not intersect in the region $t\in(0,1)$.
We will always assume that the paths are ordered so that $B_1(t)<\dotsm<B_N(t)$ for $t\in(0,1)$.
This model (which in the physics and combinatorics literatures is sometimes referred to as \emph{watermelons without a wall}) and variants of it have been studied extensively in the last decade, see for instance \cite{tracyWidomDysonBM,adlerVanMoerbeke-PDEs,baikSuidan-nirw,feierl,kobIzumKat,delvauxKuijlaarsZhang,ferrariVeto-Tacnode,liechty-nibmdope,johansson-BMtacnode} among many others. 
The model can be thought of as a limit of non-intersecting random walks, which in the physics literature are known as \emph{vicious walkers}, and were introduced by \citet{fisher} (under an additional conditioning on the walks staying positive) as a model for wetting and melting.

The interest in studying systems of non-intersecting paths, both in the statistical physics and probability literatures, is due in large part to their intimate connection with RMT and the KPZ universality class.
As an example, it has been shown that as the number of paths $N\to\infty$, and under proper scaling, several variants of these models converge to \emph{universal} processes, such as the sine, Airy, Pearcey and tacnode processes.
Universal here means that the same limiting processes arise for a wide class of other models (for more on this aspect see \cite{johansson-BMtacnode,adlerFerrarivanMoerbeke} and references therein).
The first two of these universal processes also arise naturally in RMT.
For instance, and as we already mentioned, for fixed $t\in(0,1)$ the distribution of $(B_1(t),\dotsc,B_N(t))$ coincides (modulo some scaling) with that of the eigenvalues of a random matrix drawn from the Gaussian Unitary Ensemble (GUE) and converge, under suitable scaling at the edge of the GUE spectrum, to the Airy point process.

A particular aspect which has been subject of intense research has been the study of the maximal height attained by the highest path of a collection of non-intersecting paths.
In the physics literature, \cite{SMCR,feierl2,RS1,RS2} obtain various expressions for the distribution of this maximum.
As in the case of the limiting processes mentioned above, their main motivation lies in the computation of the asymptotic distribution in the $N\to\infty$ limit, which for many different models is conjectured to be given by the Tracy-Widom GOE distribution.
This was achieved in the physics literature using non-rigorous methods (see for instance \cite{forrester}, which further establishes connections with Yang-Mills theory).
For the case of non-intersecting Brownian motions on the half-line (with either absorbing or reflecting boundary condition at zero) this was rigorously proved by \citet{liechty-nibmdope}.

In this paper we will focus on the distribution of the maximal height of a \emph{finite} number of non-intersecting Brownian bridges.
More precisely, for fixed $N$ we are interested in the distribution of the random variable
\begin{equation}\label{eq:MN}
\mathcal{M}_N=\max_{t\in[0,1]}B_N(t).
\end{equation}
As we already mentioned, under proper centering and scaling $\mathcal{M}_N$ should converge in distribution as $N\to\infty$ to a Tracy-Widom GOE random variable.
The question in which we will be interested here is whether there is a finite $N$ version of this result.
Rather surprisingly, and as we mentioned already above, we will find that the answer is yes.
But before stating the result, and in order to provide additional motivation (and in particular explain why this is in itself a natural question), let us discuss in some detail the GOE result in the $N\to\infty$ regime.

\subsection{The Airy$_2$ process and GOE} % (fold)
\label{sub:the_airy_2_process_and_goe}

The Airy${}_2$ process $\aip$ was introduced by \citet{prahoferSpohn} in the study of the
scaling limit of a discrete polynuclear growth (PNG) model.
It is expected to govern the asymptotic spatial fluctuations in a wide variety of random growth models on a one-dimensional substrate with curved initial conditions, and the point-to-point free energies of directed random polymers in $1+1$ dimensions.
For its definition and a detailed discussion of its properties and relevance we refer the reader to \cite{quastelRem-review}; let us just mention that the Airy$_2$ process is non-Markovian and stationary, with marginal distributions given by the Tracy-Widom GUE distribution.

The Airy$_2$ process is also known to arise in the setting of \emph{(geometric) last passage percolation}.
Here one considers a family $\big\{w(i,j)\}_{i,j\in\zz^+}$ of independent geometric random variables with parameter
$q$ (i.e. $\pp(w(i,j)=k)=q(1-q)^{k}$ for $k\geq0$) and let $\Pi_N$ be the collection of up-right paths of length $N$, that is, paths $\pi=(\pi_0,\dotsc,\pi_n)$ such that $\pi_i-\pi_{i-1}\in\{(1,0),(0,1)\}$.
The \emph{point-to-point last passage time} is defined, for $M,N\in\zz^+$, by 
\[L^{\rm point}(M,N)=\max_{\pi\in\Pi_{M+N}:(0,0)\to(M,N)}\sum_{i=0}^{M+N}w(\pi_i),\]
where the maximum is taken over all up-right paths connecting the origin to $(M,N)$.
\citet{johanssonShape} proved that there are explicit constants $c_1$ and $c_2$, depending only on $q$, such that $\pp\big(L^{\rm point}(N,N)\leq c_1N+c_2N^{1/3}r\big)\longrightarrow F_{\rm GUE}(r)$ as $N\to\infty$, with $F_{\rm GUE}$ the Tracy-Widom GUE distribution.
Next one defines a process $t\mapsto H_N(t)$ by linearly interpolating the values given by scaling $L^{\rm point}(N,M)$ through the relation
\begin{equation}\label{eq:lpp-pt-scaling}
L^{\rm point}(N+k,N-k)=c_1N+c_2N^{1/3}H_n(c_3N^{-2/3}k),
\end{equation}
where $c_3$ is another explicit constant which depends only on $q$.
\citet{johansson} went on to show that
\begin{equation}\label{eq:johAiry2}
  H_N(t) \longrightarrow \aip(t)-t^2
\end{equation}
in distribution, in the topology of uniform convergence on compact sets.
On the other hand one can define the \emph{point-to-line last passage time} by
\begin{equation}
L^{\rm line}(N)=\max_{k=-N,\dots,N}L^{\rm point}(N+k,N-k).\label{eq:lpplptline}
\end{equation}
From the definition and Johansson's result \eqref{eq:johAiry2} it follows that
\[c_2^{-1}N^{-1/3}[L^{\rm line}(N)-c_1N]\longrightarrow\sup_{t\in\rr}\{\aip(t)-t^2\}\]
in distribution.
But it was known separately \cite{baikRains} that the quantity on the left converges in distribution to a Tracy-Widom GOE random variable, from which Johansson deduced in \cite{johansson} the remarkable fact that
\begin{equation}\label{eq:johGOE}
  \pp\left(\max_{t\in\rr}\,(\aip(t)-t^2)\leq r\right)=F_{\rm GOE}(4^{1/3}r),
\end{equation}
where $F_{\rm GOE}$ denotes the Tracy-Widom GOE distribution (an explicit formula for $F_{\rm GOE}$ will be given in Section \ref{sub:goe_and_loe}).
A more direct proof of \eqref{eq:johGOE} was given in \cite{cqr}, based on formulas for the hitting probabilities for the Airy$_2$ process.
This method has led to several other results about the Airy$_2$ and related processes (see e.g. \cite{mqr} or the review \cite{quastelRem-review}) and it is the one we will use in this paper in the context of non-intersecting Brownian bridges.

The relation between the Airy$_2$ process and the study of $\mathcal{M}_N$ lies in the fact that, suitably rescaled, the top path of a collection of non-intersecting Brownian bridges converges to the Airy$_2$ process minus a parabola: 
\begin{equation}\label{eq:bbAiry2}
  2N^{1/6}\Big(B_N\big(\tfrac12(1+N^{-1/3}t)\big)-\sqrt{N}\Big)\longrightarrow\aip(t)-t^2
\end{equation}
in the sense of convergence in distribution in the topology of uniform convergence on compact sets.
This result is well-known in the sense of convergence of finite-dimensional distributions; the stronger convergence stated here was proved in \cite{corwinHammond}.
In view of this result, a similar argument as the one leading to \eqref{eq:johGOE} together with \eqref{eq:johGOE} itself gives the following:

\begin{thm}\label{thm:bbGOE}
  \begin{equation}\label{eq:bbGOE}
    \lim_{N\to\infty}\pp\Big(2N^{1/6}\big(\max_{t\in[0,1]}B_N(t)-\sqrt{N}\big)\leq r\Big)=F_{\rm GOE}(4^{1/3}r).
  \end{equation}
\end{thm}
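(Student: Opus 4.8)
The plan is to upgrade the process-level convergence \eqref{eq:bbAiry2} of \cite{corwinHammond} to convergence of the \emph{global} maximum of the top path, and then read off the limiting law from Johansson's identity \eqref{eq:johGOE}; this is exactly the scheme by which \eqref{eq:johGOE} itself was deduced from \eqref{eq:johAiry2} in \cite{johansson} and \cite{cqr}. Set $\wt B_N(t)=2N^{1/6}\big(B_N\big(\tfrac12(1+N^{-1/3}t)\big)-\sqrt N\big)$, so that \eqref{eq:bbAiry2} says $\wt B_N(t)\longrightarrow\aip(t)-t^2$ in distribution in the topology of uniform convergence on compact sets, and note the exact identity $2N^{1/6}(\cm_N-\sqrt N)=\sup_{|t|\le N^{1/3}}\wt B_N(t)$, valid because $\tfrac12(1+N^{-1/3}t)$ runs over $[0,1]$ as $t$ runs over $[-N^{1/3},N^{1/3}]$. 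Since $F_{\rm GOE}$ is continuous, it suffices to show $2N^{1/6}(\cm_N-\sqrt N)\Longrightarrow\sup_{t\in\rr}(\aip(t)-t^2)$, after which \eqref{eq:johGOE} identifies the limit.

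The easy half is the bound on the $\limsup$. For fixed $L>0$ the map $f\mapsto\sup_{[-L,L]}f$ is continuous on $C([-L,L])$, so \eqref{eq:bbAiry2} and the continuous mapping theorem give $\sup_{|t|\le L}\wt B_N(t)\Longrightarrow\sup_{|t|\le L}(\aip(t)-t^2)$; since $2N^{1/6}(\cm_N-\sqrt N)\ge\sup_{|t|\le L}\wt B_N(t)$ once $N\ge L^3$, this yields for every $r$
\[\limsup_{N\to\infty}\pp\big(2N^{1/6}(\cm_N-\sqrt N)\le r\big)\ \le\ \pp\big(\sup_{|t|\le L}(\aip(t)-t^2)\le r\big).\]
Because $\aip$ has continuous sample paths and $\aip(t)-t^2\to-\infty$ almost surely as $|t|\to\infty$, the right-hand side decreases as $L\to\infty$ to $\pp\big(\sup_{t\in\rr}(\aip(t)-t^2)\le r\big)=F_{\rm GOE}(4^{1/3}r)$, which is the desired upper bound.

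The matching lower bound is the crux, and the step I expect to be the main obstacle, because it needs uniform-in-$N$ control of the top path $B_N$ away from $s=\tfrac12$: one must show that the maximum of $B_N$ is attained inside the $O(N^{-1/3})$ window around $s=\tfrac12$ with probability tending to $1$ as the window widens, i.e. that for every $r$
\[\lim_{L\to\infty}\ \limsup_{N\to\infty}\ \pp\Big(\max_{|s-\frac12|\ge\frac12 LN^{-1/3}}B_N(s)\ \ge\ \sqrt N+\tfrac{r}{2N^{1/6}}\Big)=0.\]
I would prove this by splitting $\{|s-\tfrac12|\ge\tfrac12 LN^{-1/3}\}$ at a fixed $\delta>0$. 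On $\{|s-\tfrac12|\ge\delta\}$ the one-time marginal $B_N(s)$ has the law of $\sqrt{s(1-s)}$ times the largest eigenvalue of an $N\times N$ GUE matrix (normalized so that $B_N(\tfrac12)$ concentrates at $\sqrt N$), hence concentrates around $2\sqrt{Ns(1-s)}\le\sqrt N\sqrt{1-4\delta^2}$, a macroscopic $\Theta(\delta^2\sqrt N)$ below $\sqrt N$, with Tracy-Widom-type fluctuations of order $N^{-1/6}$; a union bound over a fine mesh together with a uniform modulus-of-continuity estimate for $B_N$ (both available from the Brownian Gibbs property of \cite{corwinHammond}, or from standard comparison and tail inequalities for non-intersecting paths) then keeps $\sup_{|s-1/2|\ge\delta}B_N(s)$ far below $\sqrt N+\tfrac{r}{2N^{1/6}}$. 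On the intermediate range $\tfrac12 LN^{-1/3}\le|s-\tfrac12|\le\delta$, passing to the variable $t$ of \eqref{eq:bbAiry2} (where $L\le|t|\le2\delta N^{1/3}$) and invoking a uniform-in-$N$ tail bound of the parabolic, Gaussian-type form $\pp\big(\sup_{L\le|t|\le2\delta N^{1/3}}\{\wt B_N(t)+\tfrac12 t^2\}\ge x\big)\le Ce^{-cx^{3/2}}$, again a consequence of the Brownian Gibbs analysis of \cite{corwinHammond}, the term $-t^2$ drives $\sup_{L\le|t|\le2\delta N^{1/3}}\wt B_N(t)$ below $r$ once $L$ is large, uniformly in large $N$. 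Granting the displayed localization estimate, the events $\{2N^{1/6}(\cm_N-\sqrt N)\le r\}$ and $\{\sup_{|t|\le L}\wt B_N(t)\le r\}$ differ by a set of probability $o_L(1)$ uniformly in large $N$, so running the continuous-mapping argument in the reverse direction and letting $L\to\infty$ (using continuity of $F_{\rm GOE}$ to discard a possible atom at $r$) gives $\liminf_{N\to\infty}\pp\big(2N^{1/6}(\cm_N-\sqrt N)\le r\big)\ge F_{\rm GOE}(4^{1/3}r)$; with the previous paragraph this establishes \eqref{eq:bbGOE}. We note that \eqref{eq:bbGOE} will also follow \emph{a posteriori} from the finite-$N$ identity proved in this paper together with the known edge scaling limit of the Laguerre Orthogonal Ensemble, which circumvents the localization estimate altogether.
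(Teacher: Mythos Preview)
Your proposal follows precisely the route the paper alludes to in the sentence immediately preceding the theorem (``a similar argument as the one leading to \eqref{eq:johGOE} together with \eqref{eq:johGOE} itself''), and your two-sided limsup/liminf scheme with a localization step is the natural way to make that sentence rigorous. The paper, however, does not develop this argument; it simply gestures at it and then proves Theorem~\ref{thm:bbGOE} by two other, more self-contained routes that you yourself identify: first (see the paragraph after Theorem~\ref{thm:bbLOE}) as a corollary of the finite-$N$ identity $\pp(\sqrt 2\,\cm_N\le r)=F_{{\rm LOE},N}(2r^2)$ together with the LOE soft-edge limit \eqref{eq:LOElim}; and second (see the paragraph after \eqref{eq:GOE2}) by rescaling the Fredholm determinant in Proposition~\ref{prop:dbm-formula} and using the classical convergence of the rescaled Hermite kernel $\wtKGN$ to the Airy kernel $\K$.

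The trade-off is exactly the one you flag. Your approach is conceptually direct and uses only pre-existing results (\cite{corwinHammond} and \eqref{eq:johGOE}), but it hinges on the localization estimate, whose proof via Brownian Gibbs tail bounds and a mesh/modulus-of-continuity argument is plausible but is real work that neither you nor the paper actually carries out. The paper's two routes sidestep localization entirely: route (b) reduces everything to an exact finite-$N$ identity plus a known RMT limit, and route (c) reduces it to trace-class convergence of kernels, both of which are cleaner to make fully rigorous. Your closing remark already captures this, so your proposal and the paper are in agreement about which argument is ultimately the cleanest.
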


It is this version of Johansson's result \eqref{eq:johGOE} which provided the original motivation for our paper.
We remark that, as a by-product of our results, we obtain a more direct derivation of \eqref{eq:bbGOE}.

% subsection the_airy_2_process_and_goe (end)

\subsection{GOE and LOE} % (fold)
\label{sub:goe_and_loe}

In this section we will quickly introduce the two ensembles of random matrices which are most relevant to our results.
The first one is the \emph{Gaussian Orthogonal Ensemble (GOE)}. 
Let $\mathcal{N}(a,b)$ denote a Gaussian random variable with mean $a$ and variance $b$.
An $N\times N$ \emph{GOE matrix} is a symmetric matrix $A$ such that
$A_{ij}=\mathcal{N}\hspace{-0.1em}(0,1)$ for $i>j$ and $A_{ii}=\mathcal{N}\hspace{-0.1em}(0,2)$, where all the Gaussian variables are independent (subject to the symmetry condition).
The term orthogonal refers to the fact that the distribution of a GOE matrix is invariant under conjugation by an orthogonal matrix.
The GOE can be regarded as the probability measure on the space of $N\times N$ real symmetric matrices $A$ with density
$\frac{1}{C_N}e^{-\frac1{4}\tr(A^2)}$ for some normalization constant $C_N$.
The joint density of the eigenvalues $(\lambda_1,\dotsc,\lambda_N)$ of a GOE matrix can be explicitly computed, and is given by
\[\frac{1}{Z_N}\prod_{i=1}^Ne^{-\frac{1}{4}\lambda^2_i}\prod_{1\leq i<j\leq N}|\lambda_i - \lambda_j|\]
for some other normalization constant $Z_N$.
The weights $e^{-\lambda_i^2/4}$ appearing in this formula are the weights associated to the Hermite polynomials in the theory of orthogonal polynomials; for this reason, the Gaussian ensembles such as the GOE are sometimes also referred to as \emph{Hermite ensembles}.
The Wigner semicircle law \cite{wigner} states that the empirical eigenvalue density for the GOE has approximately a semicircle
distribution on the interval $[-2\sqrt{N},2\sqrt{N}]$.
The fluctuations of the spectrum at its edges are of order $N^{-1/6}$ and give rise to the \emph{Tracy-Widom GOE distribution}: denoting by $\lambda_{\rm GOE}(N)$ the largest eigenvalue of an $N\times N$ GOE matrix, we have \cite{tracyWidom2}
\begin{equation}\label{eq:GOElim}
\lim_{N\to\infty}\pp\big(\lambda_{\rm GOE}(N)\leq2\sqrt{N}+N^{-1/6}r\big)=F_{\rm GOE}(r)
\end{equation}
with
\begin{equation}\label{eq:GOE}
F_{\rm GOE}(r)=\det({\sf I}-{\sf P}_0{\sf B}_r{\sf P}_0)_{L^2(\rr)},
\end{equation}
where ${\sf P}_r$ denotes the projection onto the interval $(r,\infty)$ (i.e. ${\sf P}_rf(x)=f(x)\uno{x>r}$ for $f\in L^2(\rr)$), ${\sf B}_r$ is the integral operator acting on $L^2(\rr)$ with kernel
\begin{equation}
  {\sf B}_r(x,y)=\Ai(x+y+r),\label{eq:defB0}
\end{equation}
and $\Ai$ denotes the Airy function.
The determinant in \eqref{eq:GOE} means the Fredholm determinant on the Hilbert space $L^2(\rr)$.
For the definition, properties and some background on Fredholm determinants, which can be thought of as the natural generalization of the usual determinant to infinite dimensional Hilbert spaces, we refer the reader to \cite[Section 2]{quastelRem-review}.
We remark that \eqref{eq:GOE} is not the original formula provided in \cite{tracyWidom2} (which is written in terms of Painlev\'e II transcendents instead of Fredholm determinants); this formula is essentially due to \cite{sasamoto}, and was proved in \cite{ferrariSpohn}.
Note also that one can choose a slightly different scaling (with the entries of a GOE matrix having variances $N$ off the diagonal and $2N$ on the diagonal) so that the edge of the spectrum is at $2N$ and the fluctuations are of order $N^{1/3}$, which leads to a scaling in \eqref{eq:GOElim} of the same order as that in \eqref{eq:lpp-pt-scaling}.

We turn now to the \emph{Laguerre Orthogonal Ensemble (LOE)}. Let $X$ be an $n\times N$ matrix whose entries are i.i.d. $\mathcal{N}\hspace{-0.1em}(0,1)$, where we assume $n\geq N$.
Then the matrix $M=X^{\sf T}\tsm X$ is said to be an $N\times N$ LOE matrix (often referred to also as a \emph{Wishart matrix}). 
In applications to statistics, one thinks of the rows of $X$ as containing $n$ independent samples of an $N$-variate Gaussian population (with covariance matrix given by the identity), so that $\frac{1}{n}M$ corresponds to the sample covariance matrix.
The joint density of the eigenvalues of $M$ is also explicit in this case, and is given by
\[\frac{1}{Z_N}\prod_{1\leq i<j\leq N}|\lambda_i - \lambda_j|\prod_{i=1}^N \lambda_i^{a}\ts e^{-\lambda_i/2},\]
where the parameter $a$ is defined to be $a=(n-N-1)/2$.
The weights $\lambda_i^{a}e^{-\lambda_i/2}$ appearing in this case are the ones associated to the (generalized) Laguerre polynomials, which explains the name of this family of random matrices.
By the Mar\v{c}enko-Pastur law \cite{marcenkoPastur} the eigenvalues of $M$ are concentrated on the interval $[0,4N]$.
Under our scaling, if $a=(n-N-1)/2$ is fixed (and independent of $N$) then the fluctuations at the \emph{soft edge} $4N$ are of order $N^{1/3}$, and have the same limiting distribution as in the GOE case \cite{johnstone}: denoting by $\lambda_{\rm LOE}(N)$ the largest eigenvalue of the LOE matrix, we have
\begin{equation}\label{eq:LOElim}
  \lim_{N\to\infty}\pp\big(\lambda_{\rm LOE}(N)\leq4N+2^{4/3}N^{1/3}r\big)=F_{\rm GOE}(r).
\end{equation}
The scaling at the \emph{hard edge} at the origin is different and gives rise to a different limit distribution, but we will not need it in this paper.

In all that follows we will be interested exclusively in the case $a=0$.

% subsection goe_and_loe (end)

\subsection{Main results} % (fold)
\label{sub:main_results}

We are ready now to state the main result of this paper.
Let $M$ be an LOE matrix with $a=0$, that is, $M=X^{\sf T}\tsm X$ with $X$ an $(N+1)\times N$ matrix with independent $\mathcal{N}\hspace{-0.1em}(0,1)$ entries.
For this choice of $a$ we will denote by $F_{{\rm LOE},N}$ the distribution function of the largest eigenvalue of $M$,
\begin{equation}\label{eq:def-FLOEN}
  F_{{\rm LOE},N}(r) = \pp\big(\lambda_{\rm LOE}(N)\leq r).
\end{equation}
Recall the definition in \eqref{eq:MN} of $\mathcal{M}_N$ as the maximum height of a collection of $N$ non-intersecting Brownian bridges. 

\begin{thm}\label{thm:bbLOE}
  Let $B_1(t)<\dotsm<B_N(t)$ be a collection of non-intersecting Brownian bridges as above. Then for all $r\geq0$ we have
  \begin{equation}\label{eq:bbLOE}
  \pp\!\left(\max_{t\in[0,1]}\sqrt{2}B_N(t)\leq r\right) = F_{{\rm LOE},N}(2r^2).
  \end{equation}
  In other words, $4\mathcal{M}_N^{2}$ is distributed as the largest eigenvalue of an LOE matrix or, alternatively, $2\mathcal{M}_N$ is distributed as the largest singular value of the $(N+1)\times N$ matrix $X$ introduced above.
\end{thm}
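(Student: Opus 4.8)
The proof proposal is to compute both sides of \eqref{eq:bbLOE} explicitly as Fredholm determinants and check they agree. The left-hand side is a hitting probability for the top path of non-intersecting Brownian bridges against a fixed (constant, in the right coordinates) barrier; I would express it using the extended kernel for non-intersecting Brownian bridges together with the general framework relating one-sided maxima of determinantal line ensembles to Fredholm determinants of a kernel acting on $L^2$ of the curve. Concretely, non-intersecting Brownian bridges started and ended at $0$ form, at each fixed time, a determinantal point process whose correlation kernel is built from Hermite polynomials (after the standard affine change of variables $B_i(t) = \sqrt{t(1-t)}\,\xi_i$, where $(\xi_1,\dots,\xi_N)$ are GUE eigenvalues); the event $\{\max_t B_N(t)\le m\}$ is a no-touching event against the moving barrier $t\mapsto m$, which one handles via the Karlin--McGregor / path-integral (Markov) trick: condition on the configuration at a single time, say $t=1/2$, and absorb the constraint on the rest of the path into a kernel. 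This is exactly the mechanism used in \cite{cqr} for the Airy$_2$ process, and here it should produce, for the left side of \eqref{eq:bbLOE}, a Fredholm determinant of the form $\det(\mathsf I - \mathsf K)$ on $L^2$ of a suitable space, with $\mathsf K$ expressed through the heat semigroup killed upon exiting $(-\infty,m)$ and the Hermite kernel.

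**Key steps.** First, I would set up the hitting-probability formula: using the Brownian bridge transition densities and the reflection principle for the killed semigroup, write $\pp(\mathcal M_N \le m)$ as a Fredholm determinant involving the operator that propagates the (Hermite) point process from time $0^+$ through the absorbing barrier at height $m$ and back to time $1^-$. Second, I would simplify this using the explicit Hermite/Mehler kernel structure, reducing everything to finite-dimensional linear algebra: the Brownian-bridge determinantal structure means the relevant operator has finite rank $N$, so the Fredholm determinant collapses to an $N\times N$ determinant whose entries are explicit integrals of Hermite functions against the reflected heat kernel. Third, and in parallel, I would write the right-hand side $F_{\mathrm{LOE},N}(2r^2)$: the largest eigenvalue of the $N\times N$ Wishart matrix $X^{\sf T}X$ with $X$ of size $(N+1)\times N$ has joint eigenvalue density with weight $\lambda^{0}e^{-\lambda/2}$ (the $a=0$ case), so $\pp(\lambda_{\mathrm{LOE}}(N)\le s)$ is itself an $N\times N$ determinant (a gap probability for the Laguerre ensemble), expressible via Laguerre polynomials. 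Fourth, I would match the two $N\times N$ determinants after the substitution $s = 2r^2 = 4m^2$: the change of variables $\lambda = 4m^2$ turns Laguerre weights with $a=0$ into expressions involving $e^{-2m^2}$ and even-index Hermite-type polynomials on the half-line, which is precisely the combinatorial identity ($L_n^{(\pm1/2)}$ relating to Hermite polynomials $H_{2n}$ and $H_{2n+1}$) that should make the entries coincide term by term, or up to a determinant-preserving row/column operation and an overall constant that is forced to be $1$ by normalization at $r\to\infty$.

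**The main obstacle.** The hard part will be carrying out the path-integral/killing computation for the Brownian bridges cleanly — i.e., getting from the moving barrier $t\mapsto m$ and the Doob-transformed (non-intersecting, pinned) measure to a workable finite-rank kernel without sign or normalization errors, since one must track the $h$-transform (the Vandermonde), the two pinning points at $0$, and the absorption at $m$ simultaneously. A convenient route is to push the starting and ending points to $0$ as a limit of distinct points (so Karlin--McGregor applies directly), then take the confluent limit; alternatively one can use the known extended kernel for non-intersecting Brownian bridges from the literature and plug into the hitting-probability formula of the type in \cite{cqr}, which relocates the difficulty to identifying that kernel's one-time marginal after killing with the Laguerre gap kernel. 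Either way, the conceptual content is the identity between (i) an absorbed/reflected Hermite determinant and (ii) a Laguerre determinant, and verifying that identity — via the classical quadratic transformations between Hermite and Laguerre polynomials together with the reflection principle at $0$ built into the $\sqrt{2}B_N$ and $2r^2$ scalings — is where the real work lies. Once the two $N\times N$ determinants are shown equal and both sides are checked to be genuine distribution functions (right-continuous, tending to $1$), \eqref{eq:bbLOE} follows; the reformulations in terms of singular values of $X$ and of the hyperbolic-cosine barrier for stationary Dyson Brownian motion are then immediate restatements via the affine change of time/space.
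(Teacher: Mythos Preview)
Your overall architecture matches the paper's: express the left side as a Fredholm determinant via a CQR-type path-integral/killing argument for the top curve, collapse it to an $N\times N$ matrix built from Hermite functions, express the right side as a finite determinant built from Laguerre functions, and then prove the resulting matrix identity. The paper does exactly this, working in the equivalent Dyson Brownian motion coordinates (where the flat barrier becomes $r\cosh t$, which after a further time change is linear, so the reflection principle applies); this produces the clean formula
\[
\pp\!\left(\max_{t\in[0,1]}\sqrt2\,B_N(t)\le r\right)=\det\!\left({\sf I}-\KGN\varrho_r\KGN\right)_{L^2(\rr)}=\det(I-H),\qquad H_{jk}=\int_\rr\varphi_j(x)\varphi_k(2r-x)\,dx.
\]
So your Steps 1--2 are on the right track.

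There are, however, two genuine gaps in Steps 3--4 that would block the argument as written.

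\emph{First}, the LOE is a $\beta=1$ ensemble, hence Pfaffian, not determinantal; the statement that ``$\pp(\lambda_{\mathrm{LOE}}(N)\le s)$ is itself an $N\times N$ determinant (a gap probability for the Laguerre ensemble), expressible via Laguerre polynomials'' is not correct in the naive sense. The paper bypasses this by passing to the \emph{square}: by the Forrester--Rains superposition/decimation relation, the even-labelled points of two superimposed independent LOE's form a determinantal process with an explicit Laguerre-type kernel $\wtKLN$, so that
\[
F_{\mathrm{LOE},N}(2r^2)^2=\det(I-L-R_1\otimes R_2)
\]
for an explicit symmetric $L$ and rank-one piece $R_1\otimes R_2$ built from Laguerre functions. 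The target identity then becomes $\det(I-H)^2=\det(I-L-R_1\otimes R_2)$, not a direct equality of single determinants.

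\emph{Second}, the proposed mechanism for the matching --- the quadratic transformations $L_n^{(\pm1/2)}\leftrightarrow H_{2n},H_{2n+1}$ --- is for the wrong Laguerre parameter and does not produce the needed relation. Here $a=0$, and what is actually required is a convolution identity expressing $H_{ij}=\int\varphi_i(x)\varphi_j(2r-x)\,dx$ as a signed sum of Laguerre functions $\psi_k(2r^2)$ (with $a=0$). The paper proves this directly and uses it to conjugate $H$ to an explicit lower-antitriangular matrix $\wt H$ in Laguerre variables; the core of the argument is then a chain of algebraic facts about $\wt H$ (notably $\wt H^2=L$, $R_1=\wt H u$, $R_2=(I-\wt H)v$, and a first-order ODE in $r$ for $\wt H$) from which the squared identity is deduced by differentiating $\log\det$ and matching at $r\to\infty$. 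The rank-one correction $R_1\otimes R_2$ does not disappear under row/column operations; handling it is exactly where the differential argument is needed. Your ``coincide term by term up to a row/column operation'' expectation will not survive contact with this rank-one term.
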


Let us quickly verify that the scaling in this result is consistent with the one in Theorem \ref{thm:bbGOE} and \eqref{eq:LOElim}.
Theorem \ref{thm:bbLOE} says that $\mathcal{M}_N\,{\buildrel ({\rm d}) \over =}\,\sqrt{\lambda_{\rm LOE}(N)/4}$.
By \eqref{eq:LOElim}, this implies that
\[\mathcal{M}_N = \sqrt{N+2^{-2/3}N^{1/3}\zeta_{\rm GOE}+o(N^{1/3})}
=\sqrt{N}+2^{-5/3}N^{-1/6}\zeta_{\rm GOE}+o(N^{-1/6}),\]
where $\zeta_{\rm GOE}$ is a Tracy-Widom GOE random variable.
In other words, $2N^{1/6}\big(\mathcal{M}_N-\sqrt{N}\big)=4^{-1/3}\zeta_{\rm GOE}+o(1)$, which is exactly the content of Theorem \ref{thm:bbGOE}.
In particular, Theorem \ref{thm:bbGOE} follows as a corollary of \eqref{eq:LOElim} and \eqref{eq:bbLOE}.

We take a brief pause now and go back to an issue left open at the end of Section \ref{sub:motiv}, which is the question of why $\mathcal{M}_N$ should be interpreted as a flat initial data object in the KPZ universality class.
In a way, the convergence of $\mathcal{M}_N$ a Tracy-Widom GOE random variable should be taken, in itself, as enough evidence of this fact.
But the connection goes a bit further, and can be understood in terms of certain variational formulas.
For example, in the context of last passage percolation (LPP), the point-to-line last passage times \eqref{eq:lpplptline} leading to GOE fluctuations are defined in terms of the maximum of point-to-point last passage times \eqref{eq:lpp-pt-scaling}, which in turn lead to GUE fluctuations.
The parallel with \eqref{eq:bbLOE} is direct.
The exact same relationship can be established at the level of many other polymer models (of which LPP is a zero-temperature version), and at the level of the totally asymmetric exclusion process (which can be mapped to LPP).

This straightforward way of expressing flat initial data quantities in terms of their curved initial data analogues is not as explicit in the case of some other models, such as the partially asymmetric exclusion process, which have less (or at least a more complicated) algebraic structure, but it is interesting to note that it does hold at the level of another of the most important members of the KPZ universality class, the KPZ equation.
Without going into much detail, the KPZ equation can be understood by studying the stochastic heat equation (SHE), which is linear.
It turns out that the flat initial data for the KPZ equation corresponds to starting the SHE with initial condition $Z_0\equiv1$, and thus by linearity the flat solution can be obtained by convolving the constant function $1$ with the solution of the SHE starting with $Z_0=\delta_0$, which corresponds to curved initial data.
Note that the relationship in this last case is not written directly in terms of a variational problem as described before, but one can check that (at least conjecturally, by essentially appealing to a version of Laplace's method) one recovers a variational problem as time $t\to \infty$.
For much more on this see \cite{quastelRem-review,cqrKPZfxpt,quastelRemHowFlat}.

Coming back to the description of our main results, Theorem \ref{thm:bbLOE} is equivalent to a statement about the probability that the top path of Dyson Brownian motion hits an hyperbolic cosine barrier, and it is that version of the result which we will prove.
Consider an $N\times N$ random matrix drawn from the \emph{Gaussian Unitary Ensemble}, that is, a (complex-valued)
Hermitian matrix $A$ such that $A_{ij}=\mathcal{N}\tsm(0,1/4)+\I\ts\mathcal{N}\tsm(0,1/4)$ for $i>j$ and $A_{ii}=\mathcal{N}\tsm(0,1/2)$, where all the Gaussian variables are independent (subject to the Hermitian condition).
Note that (for later convenience) we have chosen a somewhat different scaling for the Gaussian variables here compared with our definition of the GOE matrices.
Now suppose that we let $A$ evolve by letting each Gaussian variable in the construction diffuse according to independent copies of the Ornstein-Uhlenbeck process $X_t$ defined as the solution of the SDE
\[dX_t=-X_t\ts dt+\sigma\ts dW_t,\]
where $W_t$ is a standard Brownian motion and $\sigma=\frac1{\sqrt2}$ for off-diagonal entries and $\sigma=1$ on the diagonal.
We write the eigenvalues of this matrix at time $t$ as $(\lambda_1(t),\dotsc,\lambda_N(t))$, with $\lambda_i(t)$ increasing with $i$.
This eigenvalue diffusion is known as the \emph{stationary (GUE) Dyson Brownian motion}, and it defines an ensemble of almost surely non-intersecting curves indexed by $\rr$.
It can alternatively be written as the solution of a certain $N$-dimensional SDE, and it is not hard to check that it is stationary, with marginals at any time $t$ given by the eigenvalue distribution of an $N\times N$ GUE matrix.

\begin{thm}\label{thm:dbmLOE}
Let $(\lambda_1(t),\dotsc,\lambda_N(t))$ be the stationary Dyson Brownian motion defined above and let $F_{{\rm LOE},N}$ be defined as in \eqref{eq:def-FLOEN}. Then
\begin{equation}\label{eq:dbmLOE}
  \pp(\lambda_N(t)\leq r\ttsm\cosh(t)~\forall\,t\in\rr) = F_{{\rm LOE},N}(2r^2).
\end{equation}
\end{thm}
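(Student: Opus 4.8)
The plan is to prove the equivalent formulation in Theorem~\ref{thm:dbmLOE}, by writing both sides as Fredholm determinants on $L^2$ and identifying them, using the hitting‑probability (continuum statistics) method of~\cite{cqr} adapted to a finite number of non‑intersecting Brownian bridges. \emph{Step 1: reduction to a barrier problem for Brownian bridges.} Stationary Dyson Brownian motion is, after a deterministic change of time and space, nothing but the ensemble of $N$ non‑intersecting Brownian bridges on $[0,1]$: the Ornstein--Uhlenbeck process is a time‑changed, space‑scaled Brownian motion, and under the substitution $t=\tfrac12\log\tfrac{s}{1-s}$ (equivalently $s=\tfrac1{1+e^{-2t}}$) one has the elementary identity $\sqrt{s(1-s)}=\tfrac1{2\cosh t}$, which is the source of the $\cosh$. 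A standard computation then shows that, with the normalization of the GUE chosen in the statement, the two ensembles are related by $\sqrt2\,B_i(s)=\lambda_i(t)/\cosh t$; here the bridge condition $B_i(0)=B_i(1)=0$ is automatic since $\lambda_i(t)=O(1)$ while $\cosh t\to\infty$. Under this identification the event $\{\lambda_N(t)\le r\cosh t~\forall\,t\in\rr\}$ is exactly $\{\max_{t\in[0,1]}\sqrt2\,B_N(t)\le r\}$, so Theorems~\ref{thm:bbLOE} and~\ref{thm:dbmLOE} are equivalent and it suffices to evaluate $\pp(\mathcal M_N\le a)$.

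\emph{Step 2: a Fredholm determinant for the left--hand side.} At a fixed time $(B_1(t),\dots,B_N(t))$ is a determinantal point process with a Gaussian‑conjugated Hermite kernel $\KGN$, and the full path is determinantal with the corresponding extended kernel. Using the path‑integral / continuum‑statistics formalism, the probability that the top curve stays below the level $a$ for \emph{all} $t\in[0,1]$ collapses into a single Fredholm determinant on $L^2(\rr)$ obtained by replacing the free Brownian transition semigroup by the one killed at $a$, which by the reflection principle is $p_\tau(x,y)-p_\tau(x,2a-y)$. I would carry this out to get an expression of the form
\[\pp\big(\mathcal M_N\le a\big)=\det\!\big(\mathsf I-\KGN+\KGN\,\bar\Theta^{a}\,\KGN\big)_{L^2(\rr)}\]
for an explicit ``hit‑$a$'' operator $\bar\Theta^{a}$ built from the killed bridge semigroup, and then simplify: the reflection representation of the killed semigroup together with the symmetry $s\leftrightarrow1-s$ of the bridge pins down a definite parity and reduces the right‑hand side to (a Gaussian‑conjugated version of) $\det(\mathsf I-\oKGN)$ or $\det(\mathsf I-\eKGN)$, a Fredholm determinant of the odd (resp.\ even) part of the Hermite kernel restricted to a half‑line.

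\emph{Step 3: a Fredholm determinant for the right--hand side, and matching.} On the other side $F_{\mathrm{LOE},N}(2r^2)$ is the distribution of the largest eigenvalue of the $a=0$, $n=N+1$ Laguerre Orthogonal Ensemble, equivalently the largest squared singular value of an $(N+1)\times N$ real Gaussian matrix $X$; realizing those singular values as the positive eigenvalues of the symmetric matrix $\left(\begin{smallmatrix}0&X\\X^{\sf T}&0\end{smallmatrix}\right)$ yields a Pfaffian point process whose top‑gap probability has a known Fredholm representation $\det(\mathsf I-\KLN)$ with a Laguerre kernel $\KLN$ (one may also invoke directly the classical $\beta=1$ Laguerre gap‑probability formula). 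It then remains to check that, after the squaring change of variables $\lambda=4a^2$, the Laguerre kernel $\KLN$ is exactly the odd/even Hermite kernel produced in Step~2; this reduces, via the quadratic transformations $H_{2m}(x)\propto L^{(-1/2)}_m(x^2)$ and $H_{2m+1}(x)\propto x\,L^{(1/2)}_m(x^2)$ and a Christoffel--Darboux computation, to an algebraic identity, plus careful bookkeeping of the constants ($\sqrt2$, $4$) relating $\mathcal M_N$, $B_N$, the singular values of $X$ and $\lambda_{\mathrm{LOE}}$.

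\emph{Main obstacle.} I expect the principal difficulty to be the second half of Step~2: establishing the continuum barrier formula rigorously at finite $N$ for Brownian bridges and, above all, simplifying it — using the reflection principle and the bridge symmetry — into a closed kernel on the half‑line whose parity structure is precisely the one the Hermite‑to‑Laguerre correspondence requires. A secondary delicate point is Step~3, where one must select the right among several equivalent representations of the LOE gap probability so that the two Fredholm determinants coincide on the nose rather than only after further manipulation. Once the finite‑$N$ identity is in hand, letting $N\to\infty$ and using~\eqref{eq:LOElim} reproves Theorem~\ref{thm:bbGOE} as a by‑product.
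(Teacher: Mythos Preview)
Your Step~1 is correct and matches the paper exactly (see \eqref{eq:nibm-dbm}--\eqref{eq:nibm-dbm-max}). The gaps are in Steps~2 and~3.

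In Step~2 the continuum-statistics formula is indeed the right starting point, and the killed semigroup is indeed obtained by reflection. But the simplification you expect does not occur: the barrier sits at level $r>0$, while the Hermite functions $\varphi_n$ have parity $(-1)^n$ only under $x\mapsto -x$, not under $x\mapsto 2r-x$; the time symmetry $s\leftrightarrow 1-s$ acts on the time variable and does not induce a spatial parity. What actually comes out (Proposition~\ref{prop:dbm-formula}) is
\[
\pp\!\left(\lambda_N(t)\leq r\cosh(t)~\forall\,t\right)=\det\!\left({\sf I}-\KGN\,\varrho_r\,\KGN\right)_{L^2(\rr)},\qquad \varrho_rf(x)=f(2r-x),
\]
which as a finite matrix has entries $H_{jk}=\int\varphi_j(x)\varphi_k(2r-x)\,dx$ --- not an odd or even Hermite kernel on a half-line. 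The paper reaches this on the Dyson-Brownian-motion side: after the time change in Proposition~\ref{prop:dbmkernel} the $\cosh$ barrier becomes \emph{linear}, the reflection principle yields the explicit two-term kernel \eqref{eq:thetarR}, and Lemma~\ref{lem:inkernel} then identifies the reflection piece with $\varrho_r$ after conjugation by $e^{L{\sf D}}\KGN$, independently of $L$.

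In Step~3 the quadratic transformations you invoke, $H_{2m}\propto L_m^{(-1/2)}(x^2)$ and $H_{2m+1}\propto x\,L_m^{(1/2)}(x^2)$, produce Laguerre polynomials with parameters $\pm\tfrac12$, whereas the $a=0$ LOE (that is, $n=N+1$) has weight $e^{-\lambda/2}$ and hence involves $L_n^{(0)}$: even had Step~2 produced $\oKGN$ or $\eKGN$, the kernels would not match. The paper's route here is quite different and rather indirect: it \emph{squares} the identity, using the Forrester--Rains superimposition \eqref{eq:LOELUE}--\eqref{eq:ppLUE} to write $F_{{\rm LOE},N}(2r^2)^2$ as a determinantal (not Pfaffian) gap probability with kernel $\wtKLN$, reduces both sides to $N\times N$ matrix determinants, exhibits an explicit conjugation between $H$ and a Laguerre matrix $\wt H$ (Lemma~\ref{lem:propmat}\ref{itm:1}, via Lemma~\ref{lem:intherm-sumlague}), and then proves the remaining scalar factor by differentiating in $r$ in the Ferrari--Spohn style. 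No direct kernel-level Hermite-to-Laguerre identification of the kind you sketch seems to be available.
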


The equivalence between the two results is due to the fact that non-intersecting Brownian bridges can be mapped into the stationary Dyson Brownian motion in such a way that the probabilities on the left-hand side of \eqref{eq:bbLOE} and \eqref{eq:dbmLOE} coincide.
We will explain this in more detail in Section \ref{sec:dbm}.

The proof of Theorem \ref{thm:dbmLOE} has two steps.
The first one consists in obtaining an explicit formula for the probability on the left-hand side of \eqref{eq:dbmLOE}.
By the mapping between non-intersecting Brownian bridges and the stationary Dyson Brownian motion alluded to above, this is equivalent to finding a formula for the distribution of $\mathcal{M}_N$.
As we already mentioned, there are formulas in the literature for the distribution of the maximal height of several models related to non-intersecting Brownian bridges, which can be obtained through a direct application of the Karlin-McGregor/Lindstr\"om-Gessel-Viennot formula \cite{karlinMcGregor,lindstrom,gesselViennot}.
For completeness, let us state the formula in the case of $\mathcal{M}_N$ (see \cite{SMCR})\footnote{This formula was derived in \cite{SMCR} using path-integral techniques. Although we are not aware of a derivation in the literature based on the Karlin-McGregor formula, for the case of non-intersecting Brownian excursions (corresponding to imposing an absorbing boundary at zero) the analog formula, also derived in \cite{SMCR}, was rederived in this way in \cite{katoriTanemura-BMdet}.}:
\begin{equation}\label{eq:pathintegr}
  \pp(\mathcal{M}_N\leq r)=\frac{2^{2N}}{(2 \pi)^{N/2}r^{N^2}\prod_{j=1}^Nj!}\int_{[0,\infty)^N}d\vec{y}\,
e^{-\sum_jy_j^2/2r^2}\left(\hspace{-0.1em}\det\!\left[y_i^{j-1}\cos(y_i+\tfrac{j\pi}2)\right]_{i,j=1}^N\right)^2.
\end{equation}
By using the Cauchy-Binet identity, the right-hand side can be turned into a single $N\times N$ determinant with entries involving Hermite polynomials, see (102)--(103) in \cite{RS2}.
The resulting formula is reminiscent of some of the formulas we will obtain below, see \eqref{eq:KGN-H} together with \eqref{eq:dbm-formula}, but it is not clear how to use it directly to obtain a proof of Theorem \ref{thm:bbGOE} (nor of \eqref{eq:bbLOE}).
Moreover, as we will explain next, while the structure of the Fredholm determinant formula for the distribution of $\mathcal{M}_N$ which we will obtain in this paper (see Proposition \ref{prop:dbm-formula}) makes very apparent a connection with Johansson's result \mbox{\eqref{eq:johGOE}\dash---this} was an important clue for us in the discovery of \eqref{eq:bbLOE}\dash---from the formula appearing in \cite{RS2} such a connection is not at all clear.
It is worth mentioning that in the case of Brownian excursions, for which the analog of \eqref{eq:pathintegr} turns out to be slightly simpler, the analog of Theorem \ref{thm:bbGOE} (with the same limit) was proved by \citet{liechty-nibmdope} by appealing to a Riemann-Hilbert analysis of a certain system of discrete orthogonal polynomials.

Here we follow a different strategy, leading to an arguably simpler formula which also has some intrinsic interest.
Working at the level of Dyson Brownian motion, we appeal to a result of \cite{bcr} in order to obtain an expression for $\pp\!\left(\lambda_N(t)\leq r\ttsm\cosh(t)~\forall\,t\in[-L,L]\right)$, for fixed $L>0$ in terms of the Fredholm determinant of what they call a ``path-integral kernel''.
This path-integral kernel can be expressed in terms of the solution to a boundary value PDE, which we then solve explicitly.
Taking $L\to\infty$ in the resulting formula leads to the following result.
Let $\varphi_n$ be the \emph{harmonic oscillator functions} (which we will refer to as \emph{Hermite functions}), defined by $\varphi_n(x)=e^{-x^2/2}p_n(x)$, with $p_n$ the $n$-th normalized Hermite polynomial (i.e., so that $\|\varphi\|_2=1$), and define the \emph{Hermite kernel} as
\begin{equation}\label{eq:defKN}
  \KGN(x,y)=\sum_{n=0}^{N-1}\varphi_n(x)\varphi_n(y).
\end{equation}
We introduce also the \emph{reflection operator} ${\sf \varrho}_r$ on $L^2(\rr)$, defined by
\begin{equation}\label{eq:def-varrho}
  \varrho_rf(x)=f(2r-x).
\end{equation}

\begin{prop}\label{prop:dbm-formula}
  For any $r\geq 0$,
  \begin{equation}\label{eq:dbm-formula}
    \pp\!\left(\lambda_N(t)\leq r\ttsm\ttsm\cosh(t)~\forall\,t\in\rr\right) = \det\!\left({\sf I}-\KGN \varrho_r \KGN\right)_{L^2(\rr)}.
  \end{equation}
  The same formula holds for $\pp\!\left(\max_{t\in[0,1]}\sqrt{2}B_N(t) \leq r\right)$.
\end{prop}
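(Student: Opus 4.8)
The plan is to prove \eqref{eq:dbm-formula} first, at the level of Dyson Brownian motion, and then transfer it to non-intersecting Brownian bridges. For the transfer I would use the deterministic space-time change described in Section~\ref{sec:dbm}, which realizes each bridge as $B_i(t)=(2\cosh\tau)^{-1}O_i(\tau)$ with $\tau=\tfrac12\log\tfrac{t}{1-t}$ and $(O_1,\dots,O_N)$ a system of non-intersecting stationary Ornstein--Uhlenbeck paths, i.e.\ (up to a fixed rescaling) the stationary Dyson Brownian motion; under this change the flat barrier $\{\sqrt2\,B_N(t)\le r\ \forall\,t\in[0,1]\}$ becomes the hyperbolic-cosine barrier $\{\lambda_N(t)\le r\cosh t\ \forall\,t\in\rr\}$, and in particular the barrier height at $t=0$ is $r$, which is the source of the reflection $\varrho_r$ across $r$ from \eqref{eq:def-varrho}. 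Granting this correspondence, it suffices to establish \eqref{eq:dbm-formula}.

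Fix $L>0$. The stationary Dyson Brownian motion is a determinantal ensemble of non-intersecting paths whose single-time correlation kernel is $\KGN$ and whose extended kernel is $\KGN$ weighted by the (self-adjointized) Ornstein--Uhlenbeck semigroup, for which the $\varphi_n$ are eigenfunctions with eigenvalues $e^{-n|t-s|}$. Applying the path-integral Fredholm determinant formula of \cite{bcr} to this ensemble and the curve $t\mapsto r\cosh t$ expresses $\pp(\lambda_N(t)\le r\cosh t\ \forall\,t\in[-L,L])$ as a Fredholm determinant on $L^2(\rr)$ of the form $\det({\sf I}-\KGN+\Theta_{r,L}\KGN)$, where $\Theta_{r,L}$ is built from the Ornstein--Uhlenbeck transition operator killed on exceeding $r\cosh t$ over the time window $[-L,L]$. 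Since $\KGN$ is a finite-rank orthogonal projection, this equals $\det({\sf I}-\KGN({\sf I}-\Theta_{r,L})\KGN)$, so the task is reduced to identifying ${\sf I}-\Theta_{r,L}$ modulo the range of $\KGN$.

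The operator $\Theta_{r,L}$ is characterized in \cite{bcr} through the solution of a heat (Ornstein--Uhlenbeck) equation with an absorbing boundary along $t\mapsto r\cosh t$. I would solve this boundary value problem explicitly by applying the change of variables that removes the Ornstein--Uhlenbeck drift --- essentially the inverse of the one used in the reduction above --- which turns it into the ordinary heat equation with a \emph{flat} absorbing boundary; this is precisely where the special form of the hyperbolic-cosine barrier is exploited. For a flat boundary the killed heat kernel is given by the method of images, i.e.\ the free kernel minus its reflection across the boundary, and transporting this back one obtains that ${\sf I}-\Theta_{r,L}$ is, on the range of $\KGN$, a conjugate of the reflection $\varrho_r$ together with remainder terms that are explicit and carry factors decaying exponentially in $L$, the decay coming from the contraction of the non-constant Hermite modes under the Ornstein--Uhlenbeck semigroup.

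Finally I would let $L\to\infty$. Because $\KGN$ has finite rank $N$, the Fredholm determinant $\det({\sf I}-\KGN({\sf I}-\Theta_{r,L})\KGN)$ depends continuously on the finitely many numbers that describe $\KGN({\sf I}-\Theta_{r,L})\KGN$, so it is enough to show $\KGN({\sf I}-\Theta_{r,L})\KGN\to\KGN\varrho_r\KGN$ in trace norm; this is exactly what the exponential bounds of the previous step provide. This yields \eqref{eq:dbm-formula}, and the remaining assertion about $\pp(\max_{t\in[0,1]}\sqrt2\,B_N(t)\le r)$ then follows from the correspondence of the first paragraph. I expect the genuinely hard part to be the explicit solution of the moving-boundary problem: absorbing boundaries that move in time generically have no closed-form Green's function, so one must use quite precisely the interplay between the hyperbolic cosine and the Ornstein--Uhlenbeck drift, carefully track the Gaussian and exponential tilts produced when the boundary is straightened (it first becomes affine rather than constant), and check that these tilts recombine so that an honest reflection operator --- and not a tilted variant of it --- is what remains in the $L\to\infty$ limit.
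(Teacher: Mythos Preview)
Your outline follows the same route as the paper: apply the path-integral formula of \cite{bcr} on a finite window $[-L,L]$, solve the resulting boundary value problem by the change of variables that straightens the $r\cosh t$ barrier, use the method of images, and let $L\to\infty$; the bridge statement is then read off from the space-time change. Two points of the sketch need correction, though, and they are linked.

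First, the \cite{bcr} formula is $\det\!\big({\sf I}-\KGN+\Theta^{(r)}_{[-L,L]}\,e^{2L{\sf D}}\KGN\big)$, with the extra factor $e^{2L{\sf D}}$; after cyclicity this becomes $\det\!\big({\sf I}-\KGN+e^{L{\sf D}}\KGN\,\Theta^{(r)}_{[-L,L]}\,e^{L{\sf D}}\KGN\big)$. With your version $\det({\sf I}-\KGN+\Theta_{r,L}\KGN)$ one would get $\KGN\Theta_{r,L}\KGN\to0$ and hence the wrong limit $\det({\sf I}-\KGN)=0$. Second, and relatedly, the $L\to\infty$ limit is \emph{not} driven by ``contraction of the non-constant Hermite modes under the Ornstein--Uhlenbeck semigroup'': the factors $e^{L{\sf D}}\KGN$ \emph{grow} like $e^{(N-1)L}$, and the mechanism is an exact cancellation rather than a decay. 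Concretely (Lemma~\ref{lem:inkernel}), $e^{L{\sf D}}\KGN\,{\sf R}^{(r)}_{[-L,0]}=\KGN\varrho_r$ and ${\sf R}^{(r)}_{[0,L]}\,e^{L{\sf D}}\KGN=\varrho_r\KGN$ as identities for every $L$, so that the main contribution is already $\KGN\varrho_r\KGN$ before any limit is taken; the only genuinely $L$-dependent piece comes from the endpoint projections $\bar{\sf P}_{r\cosh(L)}$, packaged as $\Omega^{(r)}_L$, and it is this term that is shown to vanish in trace norm (Lemma~\ref{lem:errkernel}). Your closing remark that one must ``check that these tilts recombine so that an honest reflection operator \dots\ is what remains'' is exactly right, and that check is the content of Lemma~\ref{lem:inkernel}.
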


This result will be proved in Section \ref{sec:dbm}. 

The expression on the right-hand side of \eqref{eq:dbm-formula} is a close analog of the formula for $F_{\rm GOE}$ appearing in \eqref{eq:GOE}.
To see this we introduce the \emph{Airy kernel}, defined as
\[\K(x,y)=\int_0^\infty d \lambda \Ai(x+\lambda)\Ai(y+\lambda).\]
This kernel is closely related to GUE, as it is the limiting correlation kernel of the GUE eigenvalues near the edge of the spectrum.
It is related to the Tracy-Widom GOE distribution because of the identity
% \begin{equation}\label{eq:airyId}
  $\int_{-\infty}^\infty d \lambda \Ai(a+\lambda)\Ai(b-\lambda) = 2^{-1/3}\Ai(2^{-1/3}(a+b))$,
% \end{equation}
which (since $\K={\sf B}_0{\sf P}_0{\sf B}_0$, with ${\sf B_0}$ defined in \eqref{eq:defB0}) implies that 
\begin{equation}\label{eq:refl-Ai}
\K \varrho_r \K={\sf B}_0{\sf P}_0\wt {\sf B}_r{\sf P}_0{\sf B}_0
\end{equation}
with $\wt {\sf B}_r(x,y)=2^{-1/3}\Ai(2^{-1/3}(x+y+2r))$.
Since ${\sf B}_0^2={\sf I}$ (this identity is related to the fact that the family of functions $\big\{\!\Ai(x+\lambda)\big\}_{\lambda\in\rr}$ constitutes a generalized eigenbasis of $L^2(\rr)$), the cyclic property of the determinant and \eqref{eq:GOE} allow us to conclude that
\begin{equation}\label{eq:GOE2}
  F_{\rm GOE}(4^{1/3}r)=\det\!\left({\sf I}-\K \varrho_r \K \right) _{L^2(\rr)}.
\end{equation}
We point out that there does not appear to be a direct analog of \eqref{eq:refl-Ai} for $\KGN$ (although one can obtain explicit formulas for $\KGN \varrho_r \KGN$ involving no integrals, see for instance \eqref{eq:intherm-sumlague} and \eqref{eq:intherm}).

We can actually push the analogy between \eqref{eq:dbm-formula} and \eqref{eq:GOE2} a bit further and use it to provide a simple proof of Theorem \ref{thm:bbGOE}. 
Indeed, a simple scaling argument on the right-hand side of \eqref{eq:dbm-formula} leads to $\pp\!\left(2N^{1/6}(\mathcal{M}_N-\sqrt{N}) \leq r\right) = \det\!\left( {\sf I} - \wtKGN \varrho_r \wtKGN \right) $ with $\wtKGN(x,y) = \kappa_N\KGN(\kappa_Nx+\sqrt{2N},\kappa_Ny+\sqrt{2N})$, where $\kappa_N={2^{-1/2}N^{-1/6}}$.
On the other hand, it is well known that $\wtKGN$ converges to $\K$ as $N\to\infty$, where the convergence is strong enough to imply the convergence of the associated Fredholm determinants.
In view of \eqref{eq:GOE2}, and omitting the details, this implies Theorem \ref{thm:bbGOE}.

A related observation is that, in a sense, Proposition \ref{prop:dbm-formula} serves as a generalization of Johansson's result for the Airy$_2$ process, \eqref{eq:johGOE}.
In fact, the scaling argument used in the last paragraph leads to $\det\!\big( {\sf I} - \wtKGN \varrho_r \wtKGN \big) = \pp\big(\lambda_N(t) \leq (\kappa_Nr\tsm+\sqrt{2N})\cosh(t)~\forall\,t\in\rr\big)$.
On the other hand, it is known that $\wt \lambda_N(t)=\kappa_N^{-1}(\lambda_N(N^{-1/3}t)-\sqrt{2N})$ converges to $\aip(t)$ (this is just a restatement of \eqref{eq:bbAiry2} in view of the mapping between the two models).
If we knew that the convergence is strong enough to imply the convergence of $\pp\!\left(\wt\lambda_N(t) \leq a~\forall\,t\in\rr\right)$ with some control on $a$, then \eqref{eq:johGOE} would follow, because by the argument sketched in the last paragraph the determinant would go to $F_{\rm GOE}(4^{1/3}r)$, while $\kappa_N^{-1}\big[ (\kappa_N r+ \sqrt{2N})\cosh(N^{-1/3}t)-\sqrt{2N}\big] = r+t^2+O(N^{-2/3})$.

The second step in the proof of Theorem \ref{thm:dbmLOE} consists in showing that the right-hand side of \eqref{eq:dbm-formula}, i.e. $\det\!\left({\sf I}-\KGN \varrho_r \KGN\right)_{L^2(\rr)}$, equals $F_{{\rm LOE},N}(2r^2)$.
This is proved in Section \ref{sec:connection_with_loe}.
We remark that, together with the preceding discussion, this identity provides an alternative proof of the result of \cite{johnstone} in the case $a=0$.

% subsection main_results (end)

\section{Hitting probabilities for Dyson Brownian motion}\label{sec:dbm}

Recall the stationary Dyson Brownian motion introduced in Section \ref{sub:main_results}.
As we mentioned, this model is intimately related to non-intersecting Brownian bridges.
The basic relation is that if one considers the non-stationary version of Dyson Brownian motion (where the Gaussian variables making up the entries of a GUE matrix evolve according to a plain Brownian motion), then the dynamics of the eigenvalues of this evolving matrix coincide with those of a collection of Brownian motions conditioned to never intersect.
The analogous relation in our setting goes through a time-change, and is given explicitly in \cite[Section 2.2.2]{tracyWidomNIBrExc}:
if $B_1(t)<\dotsm<B_N(t)$, $t\in[0,1]$, are non-intersecting Brownian bridges and $\lambda_1(t)<\dotsm<\lambda_N(t)$, $t\in\rr$, are defined as a stationary Dyson Brownian motion, then
\begin{equation}\label{eq:nibm-dbm}
  \big(B_i(t)\big)_{i=1,\dotsc,N} \,{\buildrel ({\rm d}) \over =}\,\left(\sqrt{2t(1-t)}\,\lambda_i(\tfrac12\log(t/(1-t)))\right)_{i=1,\dotsc,N}
\end{equation}
as processes defined for $t\in[0,1]$.
Changing variables $t\longmapsto e^{2s}/(1+e^{2s})$ leads to
\begin{equation}\label{eq:nibm-dbm-max}
  \max_{t\in[0,1]}B_N(t)\,{\buildrel ({\rm d}) \over =}\,\max_{t\in[0,1]}\sqrt{2t(1-t)}\,\lambda_N(\tfrac12\log(t/(1-t)))
=\sup_{s\in\rr}\frac{\lambda_N(s)}{\sqrt{2}\cosh(s)},
\end{equation}
which shows that Theorems \ref{thm:bbLOE} and \ref{thm:dbmLOE} are equivalent\footnote{A similar argument, together with the fact \cite{tracyWidomDysonBM} that $\sqrt{2}N^{1/6}(\lambda_N(N^{-1/3}t)-\sqrt{2N})$ converges to $\aip(t)$ in the sense of finite-dimensional distributions, provides a justification for a version of \eqref{eq:bbAiry2} in this weaker sense.}.
The rest of this section will thus be devoted to computing $\pp\big(\lambda_N(t)\leq r\ttsm\cosh(t)~\forall\,t\in\rr\big)$.

\subsection{Path-integral kernel} % (fold)
\label{sub:path_integral_kernel}

The finite-dimensional distributions of the stationary (GUE) Dyson Brownian motion are classically expressed through a Fredholm determinant in terms of the \emph{extended Hermite kernel} $\KGN^{\rm ext}$
\begin{equation}\label{eq:KGNext}
  \KGN^{\rm ext}(s,x;t,y)=\begin{cases}
    \sum_{n=0}^{N-1}e^{n(s-t)}\varphi_n(x)\varphi_n(y) &\mbox{if } s\geq t,\\
    -\sum_{n=N}^{\infty}e^{n(s-t)}\varphi_n(x)\varphi_n(y) &\mbox{if } s<t
  \end{cases}
\end{equation}
where $\varphi_n(x)=e^{-x^2/2}p_n(x)$ and $p_n$ is the $n$-th normalized Hermite polynomial.
Explicitly, if $-\infty<t_1<t_2<\ldots<t_n<\infty$ and $r_1,\dotsc,r_n\in\rr$, then
\begin{equation}\label{eq:dbmextkernel}
\pp\bigl(\lambda_N(t_j)\leq r_j,\,j=1,\dotsc,n\bigr)=\det\!\left({\sf I}-{\rm f}\KGN^{\rm ext}{\rm f}\right)_{L^2(\{t_1,\ldots,t_n\}\times\mathbb{R})},
\end{equation}
where we have counting measure on $\{t_1,\ldots,t_n\}$ and Lebesgue measure on $\rr$, and $\rm f$ is defined on $\{t_1,\ldots,t_n\}\times\mathbb{R}$ by
\begin{equation}\label{eq:f}
{\rm f}(t_j,x)=\uno{x\in(r_j,\infty)}
\end{equation}
(for more details see \cite{tracyWidomNIBrExc}).

The first step in our derivation is to obtain a formula for the probability that $\lambda_N(t)$ stays below $r\ttsm\cosh(t)$ on a finite interval $[-L,L]$.
To that end, we need to consider a finite mesh $t_1<\dotsm<t_n$ of $[-L,L]$, let $r_i=r\ttsm\cosh(t_i)$, and then take a limit of the corresponding probabilities as given in \eqref{eq:dbmextkernel} as the mesh size goes to zero.
But these probabilities become increasingly cumbersome as $n$ increases, due to the $n$-dependence in the $L^2$ space on which the operators act.
The way to overcome this problem is to first manipulate the right-hand side of \eqref{eq:dbmextkernel} into a Fredholm determinant of some other kernel acting on $L^2(\rr)$.
Such a formula was first stated, in the context of the Airy$_2$ process, in \cite{prahoferSpohn} (see also \cite{prolhacSpohn}), and the resulting formula was used in \cite{cqr} to obtain a formula for the probability that $\aip(t)$ stays below a given function $g(t)$ on a finite interval.
Later on, the procedure that converts the extended kernel formula into a formula with a Fredholm determinant acting on $L^2(\rr)$ was generalized in \cite{bcr} (see also \cite{quastelRemAiry1}) to a wide class of processes that includes the stationary Dyson Brownian motion, and from the resulting formula they obtained a continuum statistics formula for Dyson Brownian motion in a similar way as in \cite{cqr}.
In order to state the formula we need to introduce some operators.

First, recall the definition of the \emph{Hermite kernel} $\KGN$, given in \eqref{eq:defKN}, and note that $\KGN^{\rm ext}(t,x;t,y)=\KGN(x,y)$ for any $t$. 
Next we introduce the differential operator
\begin{equation}\label{eq:defD}
  {\sf D}=-\tfrac{1}{2}(\Delta-x^2+1)
\end{equation}
($\Delta$ is the Laplacian on $\rr$). 
${\sf D}$ and $\KGN$ are related: ${\sf D} \varphi_n=n \varphi_n$, so that $\KGN$ is the projection operator onto the space span$\{\varphi_0,\dotsc,\varphi_{N-1}\}$ associated to the first $N$ eigenvalues of ${\sf D}$.
In particular, even though $e^{t{\sf D}}$ is well-defined in general only for $t\leq 0$, $e^{t{\sf D}}\KGN$ is well defined for all $t$, and its integral kernel is given by
\begin{equation}\label{eq:etDKGN}
e^{t{\sf D}}\KGN(x,y)=\sum_{n=0}^{N-1}e^{tn}\varphi_n(x) \varphi_n(y).
\end{equation}

Now fix $\ell_1<\ell_2$ and consider a function $g\in H^1([\ell_1,\ell_2])$ (i.e. both $g$ and its derivative are in $L^2(\rr)$).
We introduce an operator $\Theta_{[\ell_1,\ell_2]}^g$ acting on $L^2(\rr)$ as follows: $\Theta_{[\ell_1,\ell_2]}^g f(x)=u(\ell_2,x)$, where $u(\ell_2,\cdot)$ is the solution at time $\ell_2$ of the boundary value problem
\begin{equation}\label{eq:bound-pde}
\begin{aligned}
\partial_t u+{\sf D}u&=0\quad\text{for}\ x<g(t),\ t\in(\ell_1,\ell_2)\\
u(\ell_1,x)&=f(x)\mathbf{1}_{x<g(\ell_1)}\\
u(t,x)&=0\quad \text{for}\ x\geq g(t).
\end{aligned}
\end{equation}

\begin{prop}[\cite{bcr}]\label{prop:dbmcont}
For any $\ell_1<\ell_2$ and $g\in H^1([\ell_1,\ell_2])$ we have
\begin{equation}\label{eq:dbmcont}
\pp\left(\lambda_N(t)<g(t)~\forall\,t\in[\ell_1,\ell_2]\right)=\det\!\left({\sf I}-\KGN+\Theta_{[\ell_1,\ell_2]}^g e^{(\ell_2-\ell_1){\sf D}}\KGN\right).
\end{equation}
\end{prop}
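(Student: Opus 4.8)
This is precisely the continuum-statistics formula for (stationary) Dyson Brownian motion proved in \cite{bcr}; I outline the strategy one would follow, which adapts to the present setting the ``path-integral'' method introduced for the Airy$_2$ process in \cite{prahoferSpohn,cqr}. The starting point is the multi-time determinantal formula \eqref{eq:dbmextkernel}. One fixes a mesh $\ell_1=t_1<t_2<\dotsm<t_n=\ell_2$, sets $r_j=g(t_j)$, and applies \eqref{eq:dbmextkernel}; since each $\lambda_N(t_j)$ has a continuous density, at fixed times the distinction between $\leq$ and $<$ is immaterial. The first task is then to collapse this Fredholm determinant on $L^2(\{t_1,\dotsc,t_n\}\times\rr)$ into one on $L^2(\rr)$.

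The algebraic reduction rests on the semigroup structure of the extended kernel: comparing \eqref{eq:KGNext} with \eqref{eq:etDKGN} one has $\KGN^{\rm ext}(s,\cdot\,;t,\cdot)=e^{(s-t){\sf D}}\KGN$ for $s\geq t$ and $\KGN^{\rm ext}(s,\cdot\,;t,\cdot)=e^{(s-t){\sf D}}(\KGN-{\sf I})$ for $s<t$ (here $e^{(s-t){\sf D}}$ is a genuine bounded contraction when $s<t$, while for $s>t$ the a priori unbounded $e^{(s-t){\sf D}}$ only ever occurs paired with the finite-rank projection $\KGN$, so that \eqref{eq:etDKGN} applies). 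Using that $\KGN$ and all the $e^{s{\sf D}}$ commute (they are simultaneously diagonalized by the Hermite functions) and that $\KGN^2=\KGN$, the standard telescoping over the mesh points turns the multi-time determinant into
\[
\det\!\left({\sf I}-\KGN+\Theta_n\,e^{(\ell_2-\ell_1){\sf D}}\KGN\right)_{L^2(\rr)},
\]
where $\Theta_n=({\sf I}-{\sf P}_{r_1})\,e^{(t_1-t_2){\sf D}}\,({\sf I}-{\sf P}_{r_2})\dotsm e^{(t_{n-1}-t_n){\sf D}}\,({\sf I}-{\sf P}_{r_n})$ is the discrete transfer operator and each factor ${\sf I}-{\sf P}_{r_j}$ is the projection onto $(-\infty,r_j)$ (after possibly using the cyclic property of the Fredholm determinant together with $\KGN e^{s{\sf D}}=e^{s{\sf D}}\KGN$ and $\KGN^2=\KGN$ to bring the expression into exactly the form of \eqref{eq:dbmcont}). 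Note that every exponential $e^{(t_j-t_{j+1}){\sf D}}$ in $\Theta_n$ has non-positive exponent, and the single positive-time propagator $e^{(\ell_2-\ell_1){\sf D}}$ sits against $\KGN$, so all the operators are well defined and the whole expression is a bona fide Fredholm determinant.

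One then refines the mesh. The operator $\Theta_n$ alternates the free evolution $e^{\delta_j{\sf D}}$ ($\delta_j=t_j-t_{j+1}\to0$) with multiplication by $\uno{x<g(t_j)}$, which is exactly a Trotter--Kato/Feynman--Kac product formula approximating the flow that evolves by $\partial_t u=-{\sf D}u$ while being killed the instant a path crosses the moving barrier $x=g(t)$, i.e. the solution operator $\Theta^g_{[\ell_1,\ell_2]}$ of the boundary value problem \eqref{eq:bound-pde}. The hypothesis $g\in H^1([\ell_1,\ell_2])$ enters here, to guarantee well-posedness of \eqref{eq:bound-pde} and to control the moving-barrier approximation, yielding $\Theta_n\to\Theta^g_{[\ell_1,\ell_2]}$ in a topology strong enough (strong convergence together with a uniform trace-norm bound on $\Theta_n\,e^{(\ell_2-\ell_1){\sf D}}\KGN$, the latter being easy because $\KGN$ is finite rank) to pass to the limit inside the Fredholm determinant. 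Finally, one checks that $\pp(\lambda_N(t_j)<g(t_j),\,j=1,\dotsc,n)$ decreases along refining meshes to $\pp(\lambda_N(t)<g(t)~\forall\,t\in[\ell_1,\ell_2])$, by continuity of the paths $t\mapsto\lambda_N(t)$, which identifies the left-hand side of \eqref{eq:dbmcont}.

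The routine part is the algebraic collapse to a single-time determinant: once the semigroup and commutation relations above are in place, it is pure bookkeeping. The genuine analytic content \dash--- and the main obstacle \dash--- is the continuum limit: proving that the discrete transfer operators converge to the PDE solution operator $\Theta^g_{[\ell_1,\ell_2]}$ strongly and with enough uniformity that the limit can be taken inside the Fredholm determinant, which requires careful estimates for the heat-type flow with the moving Dirichlet boundary $x=g(t)$ and is where the $H^1$ regularity of $g$ is used. As all of this is carried out in \cite{bcr}, in our application we simply invoke the resulting identity.
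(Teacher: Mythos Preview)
The paper does not prove this proposition: it simply states it and refers to \cite[Proposition 4.3 and Remark 4.4]{bcr} for the details. Your proposal does exactly the same thing in the end (``we simply invoke the resulting identity''), while additionally supplying a correct sketch of the argument from \cite{bcr}\dash---the algebraic collapse of the extended-kernel determinant via the semigroup/commutation relations, followed by the Trotter-type continuum limit identifying the discrete transfer operator with $\Theta^g_{[\ell_1,\ell_2]}$\dash---so there is nothing to compare and no gap to flag.
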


See \cite[Proposition 4.3 and Remark 4.4]{bcr} for more details. Here, and in the rest of this section, the Fredholm determinant is computed on the Hilbert space $L^2(\rr)$.

In order to make use of \eqref{eq:dbmcont} we need a formula for $\Theta_{[\ell_1,\ell_2]}^g$.
By the linearity of \eqref{eq:bound-pde}, $\Theta_{[\ell_1,\ell_2]}^g$ acts as an integral operator with kernel given by solving the boundary value problem with $f$ replaced by a delta function.
The next result gives a probabilistic representation for the integral kernel of $\Theta_{[\ell_1,\ell_2]}^g$.

\begin{prop}\label{prop:dbmkernel}
Let $\alpha=\frac14e^{2\ell_1}$, $\beta=\frac14e^{2\ell_2}$, and denote by $\Theta_{[\ell_1,\ell_2]}^g(x,y)$ the integral kernel of $\Theta_{[\ell_1,\ell_2]}^g$. Then
\begin{multline}\label{eq:dbmkernel}
  \Theta_{[\ell_1,\ell_2]}^g(x,y) = e^{\frac12(y^2-x^2)+\ell_2}\,
  \frac{e^{-(e^{\ell_1}x-e^{\ell_2}y)^2/(4(\beta-\alpha))}}{\sqrt{4\pi(\beta-\alpha)}}\\
  \times\mathbb{P}_{\hat{b}(\alpha)=e^{\ell_1}x,\,\hat{b}(\beta)=e^{\ell_2}y}\!\left(\hat{b}(t)\leq\sqrt{4t}\,g\bigl(\tfrac12\log(4t)\bigr)~\forall\, t\in[\alpha,\beta]\right),
\end{multline}
where the probability is computed with respect to a Brownian bridge $\hat{b}(t)$ from $e^{\ell_1}x$ at time $\alpha$ to $e^{\ell_2}y$ at time $\beta$ and with diffusion coefficient 2.
\end{prop}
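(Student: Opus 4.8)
The plan is to solve the boundary value problem \eqref{eq:bound-pde} with $f$ replaced by a delta mass $\delta_y$ (so that the resulting $u(\ell_2,\cdot)$ is the kernel $\Theta_{[\ell_1,\ell_2]}^g(\cdot,y)$) by transforming it into a heat equation with an absorbing barrier, for which a Feynman-Kac / reflection representation is available. The first step is to recognize that the operator ${\sf D}=-\tfrac12(\Delta-x^2+1)$ is conjugate, via the ground state $e^{-x^2/2}$ and a time-dependent rescaling of space, to the plain heat semigroup. Concretely, I would look for a substitution of the form $u(t,x)=e^{\frac12(y^2-x^2)}\,c(t)\,v(\tau(t),\,\xi(t,x))$ with $\xi(t,x)=e^{t}x$ and a new time variable $\tau(t)=\tfrac14e^{2t}$ (so $\alpha=\tau(\ell_1)$, $\beta=\tau(\ell_2)$ as in the statement); this is exactly the Mehler-type change of variables that turns the Ornstein-Uhlenbeck generator into the generator of Brownian motion run on a finite time horizon. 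Plugging this ansatz into $\partial_t u+{\sf D}u=0$ and matching terms should force $v$ to solve the backward heat equation $\partial_\tau v=\partial_{\xi\xi}v$ (with diffusion coefficient $2$, matching the "diffusion coefficient 2" in the statement) on the region where the barrier is not active, and should pin down the prefactor $c(t)$ up to the $e^{\ell_2}$ that appears in \eqref{eq:dbmkernel}. I expect the constant to come out correctly once the Jacobian of the time change ($d\tau = \tfrac12 e^{2t}\,dt$) and the normalization of the delta mass under the change of variables are tracked carefully.

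The second step is to handle the moving boundary. Under the substitution, the condition $u(t,x)=0$ for $x\ge g(t)$ becomes $v(\tau,\xi)=0$ for $\xi\ge e^{t}g(t)$; writing $t=\tfrac12\log(4\tau)$ this is $\xi \ge \sqrt{4\tau}\,g\bigl(\tfrac12\log(4\tau)\bigr)$, which is precisely the barrier $\hat b(t)\le \sqrt{4t}\,g(\tfrac12\log(4t))$ appearing in the Brownian-bridge probability in \eqref{eq:dbmkernel}. So after the change of variables I am solving: backward heat equation for $v$ on $\{\xi < \text{barrier}\}$, with initial data (at time $\alpha$) a delta mass at $\xi = e^{\ell_1}x$ and absorption at the barrier, evaluated at time $\beta$ at the point $\xi = e^{\ell_2}y$. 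The classical probabilistic solution of this is the free heat kernel from $e^{\ell_1}x$ to $e^{\ell_2}y$ over time $\beta-\alpha$, namely $\dfrac{e^{-(e^{\ell_1}x-e^{\ell_2}y)^2/(4(\beta-\alpha))}}{\sqrt{4\pi(\beta-\alpha)}}$ (with the factor $4$ in place of $2$ because the diffusion coefficient is $2$), multiplied by the conditional probability that a Brownian bridge between those endpoints stays below the barrier on $[\alpha,\beta]$. Assembling the prefactor $e^{\frac12(y^2-x^2)+\ell_2}$ from step one with this expression gives \eqref{eq:dbmkernel}.

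The main obstacle, and the place where care is needed, is twofold: first, getting the exact constants and the $e^{\ell_2}$ (rather than, say, $e^{\ell_1}$ or $e^{\frac12(\ell_1+\ell_2)}$) right — this requires bookkeeping the Jacobians of both the spatial scaling $x\mapsto e^t x$ and the time change $t\mapsto \tfrac14 e^{2t}$, together with how a delta function transforms under $x\mapsto e^{\ell_1}x$; and second, justifying that the formal PDE manipulation is legitimate, i.e. that the absorbed heat equation with a delta initial condition genuinely has the stated Feynman-Kac representation. For the latter I would invoke standard results on the Dirichlet heat kernel in a time-dependent domain (the barrier curve is as regular as $g\in H^1$ allows, which is enough), and note that since \eqref{eq:dbmcont} is ultimately used inside a Fredholm determinant that we will control by other means, it is enough to identify the kernel on a dense set of nice $g$ and pass to the limit. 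An alternative route that sidesteps some of the PDE technicalities is to verify directly that the right-hand side of \eqref{eq:dbmkernel}, as a function of $(\ell_1,x)$, solves the backward problem $\partial_{\ell_1}u - {\sf D}u = 0$ with the stated boundary and terminal conditions — but either way the constant-chasing in step one is the real work.
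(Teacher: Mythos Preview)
Your proposal is correct and follows essentially the same route as the paper: the authors make the substitution $u(t,x)=e^{x^2/2+t}\,v(\tau,z)$ with $\tau=\tfrac14 e^{2t}$ and $z=e^t x$, check that $v$ then satisfies the heat equation $\partial_\tau v=\partial_z^2 v$ with an absorbing boundary at $z=\sqrt{4\tau}\,g(\tfrac12\log(4\tau))$, and read off the Feynman--Kac representation in terms of a Brownian bridge with diffusion coefficient $2$. One small correction to your ansatz: the conjugating factor should be $e^{+x^2/2}$ (together with $e^t$), not $e^{-x^2/2}$ --- the ground-state transform $u=e^{-x^2/2}w$ yields the Ornstein--Uhlenbeck equation $\partial_t w=\tfrac12\partial_x^2 w - x\partial_x w$, and after the space--time change $z=e^t x$ a residual drift $-2z\partial_z$ survives; it is the ``anti-ground-state'' $e^{+x^2/2}$ that produces the drift $+x\partial_x$ which exactly cancels under $z=e^t x$, leaving the pure heat equation. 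This is precisely the constant-chasing you anticipated, and once fixed everything goes through as you describe.
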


\begin{proof}
  Let $u(t,x)$ be the solution to the boundary value PDE \eqref{eq:bound-pde} and consider the transformation $u(t,x)=e^{x^2/2+t}\ts v(\tau,z)$ with $\tau=\frac14e^{2t}$, $z=e^tx$.
  It is not hard to check then that $v(\tau,z)$ satisfies the following boundary value problem associated to the heat equation:
  \begin{align}
  \p_\tau v-\p^2_z v&=0\quad\text{for}\ z<\sqrt{4\tau}\,g\bigl(\log(4\tau)/2\bigr),\ \tau\in(\alpha,\beta)\\
  v(\alpha,z)&=e^{-z^2/(8\alpha)-\log(4\alpha)/2}f\bigl(z/\sqrt{4\alpha}\bigr)\uno{\{z<\sqrt{4\alpha}\,g(\log(4\alpha)/2)\}}\\
  v(\tau,z)&=0\quad \text{for}\ z>\sqrt{4\tau}\,g\bigl(\log(4\tau)/2\bigr),
  \end{align}
  where $\alpha=\frac14e^{2\ell_1}$, $\beta=\frac14e^{2\ell_2}$.
  This boundary value PDE can be solved explicitly in terms of Brownian motion by using the Feynman-Kac formula: letting $\hat b(s)$ denote a Brownian bridge with diffusion coefficient $2$, we have
  \begin{multline}
  v(\beta,z)=\int_{-\infty}^{\sqrt{4\alpha}\ts g(\log(4\alpha)/2)} dx\,e^{-x^2/(8\alpha)-\log(4\alpha)/2}f\Bigl(\frac{x}{\sqrt{4\alpha}}\Bigr)\frac{e^{-(x-z)^2/(4(\beta-\alpha))}}{\sqrt{4\pi(\beta-\alpha)}}\\
  \cdot\mathbb{P}_{\hat b(\alpha)=x,\,\hat b(\beta)=z}\left(\hat b(\tau)\leq\sqrt{4\tau}\,g\bigl(\log(4\tau)/2\bigr)\ \text{on}\ [\alpha,\beta]\right).
  \end{multline}
  Now using the fact that $u(\ell_2,y)=e^{y^2/2+\ell_2}\ts v(e^{2\ell_2}/4,e^{\ell_2}y)$ and recalling that $\alpha=\frac14e^{2\ell_1}$ we immediately obtain
  \begin{multline}
  u(\ell_2,y)=\int_{-\infty}^{e^{\ell_1}g(\ell_1)}\!dx\,e^{\frac12y^2-\frac{1}{2}e^{-2\ell_1}x^2+\ell_2-\ell_1}\,\frac{e^{-(x-e^{\ell_2}y)^2/(4(\beta-\alpha))}}{\sqrt{4\pi(\beta-\alpha)}}f(e^{-\ell_1}x)\\
  \cdot\mathbb{P}_{\hat b(\alpha)=x,\,\hat b(\beta)=e^{\ell_2}y}\left(\hat b(\tau)\leq\sqrt{4\tau}\,g\bigl(\log(4\tau)/2\bigr)\ \text{on}\ [\alpha,\beta]\right).
  \end{multline}
  Changing variables $x\mapsto e^{\ell_1}x$ in the integral, the formula for $\Theta_{[\ell_1,\ell_2]}^g(x,y)$ readily follows.
\end{proof}

% subsection path_integral_kernel (end)

\subsection{Hyperbolic cosine barrier} % (fold)
\label{sub:hyperbolic_cosine_barrier}

Observe now the key fact that, in our case $g(t)=r\ttsm\cosh(t)$, the probability appearing in \eqref{eq:dbmkernel} is reduced to the probability of a Brownian bridge staying below the linear function $2rt+\inv2r$, which can be computed explicitly.
In fact, assuming that $x\leq e^{-\ell_1}(2r \alpha+r/2)=r\ttsm\cosh(\ell_1)$ and $y\leq e^{-\ell_2}(2r \beta+r/2)=r\ttsm\cosh(\ell_2)$ (note that the probability below is obviously zero if either condition is not met), the Cameron-Martin-Girsanov formula yields
\begin{multline*}
\pp_{\substack{\hat b(\alpha)=e^{\ell_1}x\\\hat b(\beta)=e^{\ell_2}y}}\!\left(\hat b(t)\leq 2rt+\tfrac12r\ \text{on}\ [\alpha,\beta]\right)\\
  =1-e^{-r(e^{\ell_2}y-e^{\ell_1}x)+r^2(\beta-\alpha)}\frac{e^{-\frac{(e^{\ell_2}y-2r \beta-e^{\ell_1}x+2r \alpha)^2}{4(\beta- \alpha)}}}{e^{-\frac{(e^{\ell_2}y-e^{\ell_1}x)^2}{4(\beta- \alpha)}}}\,
 \pp_{\substack{\hat{b}(\alpha)=e^{\ell_1}x-2r\alpha\\\hat b(\beta)=e^{\ell_2}y-2r \beta}}\left(\max_{t\in[\alpha,\beta]}\hat b(t)>\tfrac12r\right).
\end{multline*}
The last probability can be computed easily using the reflection principle, and it equals $e^{-(e^{\ell_1}x-2r \alpha-r/2)(e^{\ell_2}y-2r\beta-r/2)/(\beta-\alpha)}$ (see for instance page 67 in \cite{handbookBM}).
Putting this back in our formula \eqref{eq:dbmkernel} for $\Theta_{[\ell_1,\ell_2]}^{g(t)}$ with $g(t)=r\ttsm\cosh(t)$, which for simplicity we will denote from now on as $\Theta^{(r)}_{[\ell_1,\ell_2]}$, gives
\begin{multline}\label{eq:thetarRpre}
\Theta^{(r)}_{[\ell_1,\ell_2]}(x,y)=\uno{x\leq r\ttsm\cosh(\ell_1),\,y\leq r\ttsm\cosh(\ell_2)}\,
e^{\frac12(y^2-x^2)+\ell_2}\,\frac{1}{\sqrt{4\pi(\beta-\alpha)}}\\
\times\left(e^{-(e^{\ell_1}x-e^{\ell_2}y)^2/(4(\beta-\alpha))}-e^{-r(e^{\ell_2}y-e^{\ell_1}x)+r^2(\beta-\alpha)-(e^{\ell_1}x+e^{\ell_2}y-2r(\alpha+\beta)-r)^2/(4(\beta-\alpha))}\right).
\end{multline}
The above expression splits into two terms.
Note that if we disregard the indicator function, then by the above derivation the first term corresponds to the solution of \eqref{eq:bound-pde} with $g=\infty$, and thus it is nothing but $e^{-(\ell_2-\ell_1){\sf D}}(x,y)$.
As a consequence, we deduce that
\begin{equation}\label{eq:thetarR}
\Theta^{(r)}_{[\ell_1,\ell_2]}=\bar{\sf P}_{r\ttsm\cosh(\ell_1)}\left(e^{-(\ell_2-\ell_1){\sf D}}-{\sf R}^{(r)}_{[\ell_1,\ell_2]}\right)\bar{\sf P}_{r\ttsm\cosh(\ell_2)},
\end{equation}
where $\bar{\sf P}_af(x)=({\sf I}-{\sf P}_a)f(x)=f(x)\uno{x\leq a}$ and ${\sf R}^{(r)}_{[\ell_1,\ell_2]}$ is the reflection term
\begin{equation}\label{eq:reflR}
{\sf R}^{(r)}_{[\ell_1,\ell_2]}(x,y)=\tfrac{1}{\sqrt{4\pi(\beta-\alpha)}}e^{\frac12(y^2-x^2)+\ell_2-r(e^{\ell_2}y-e^{\ell_1}x)+r^2(\beta-\alpha)-(e^{\ell_1}x+e^{\ell_2}y-2r(\alpha+\beta)-r)^2/(4(\beta-\alpha))}
\end{equation}
and, we recall, $\alpha=\frac14e^{2\ell_1}$, $\beta=\frac14e^{2\ell_2}$.

Now we set $-\ell_1=\ell_2=L$, so that by Proposition \ref{prop:dbmcont} we have
\begin{equation}
\pp\left(\lambda_N(t)\leq r\ttsm\cosh(t)~\forall\,t\in\rr\right)
=\lim_{L\to \infty}\det\!\left({\sf I}-\KGN+\Theta^{(r)}_{[-L,L]} e^{2L{\sf D}}\KGN\right).
\end{equation}
Using now the cyclic property of the Fredholm determinant and the identities $e^{2L{\sf D}}\KGN=(e^{L{\sf D}}\KGN)^2$ and $e^{-L{\sf D}}\KGN e^{L{\sf D}}\KGN=e^{L{\sf D}}\KGN e^{-L{\sf D}}\KGN=\KGN$ (which follow directly from \eqref{eq:etDKGN} and the orthonormality of the $\varphi_n$'s) we may rewrite the last identity as
\begin{equation}\label{eq:detRewr}
\pp\left(\lambda_N(t)\leq r\ttsm\cosh(t)~\forall\,t\in\rr\right)
=\lim_{L\to \infty}\det\!\left({\sf I}-\KGN+e^{L{\sf D}}\KGN\Theta^{(r)}_{[-L,L]} e^{L{\sf D}}\KGN\right).
\end{equation}

Note that $s\longmapsto\Theta^{(r)}_{[-L,s]}$ is a semigroup, so that $\Theta^{(r)}_{[-L,L]}=\Theta^{(r)}_{[-L,0]}\Theta^{(r)}_{[0,L]}$, and thus in view of \eqref{eq:thetarR} we may write
\begin{equation}
\Theta^{(r)}_{[-L,L]}=\bar{\sf P}_{r\ttsm\cosh(L)}\bigl(e^{-L{\sf D}}-{\sf R}^{(r)}_{[-L,0]}\bigr)\bar{\sf P}_{r}\bigl(e^{-L{\sf D}}-{\sf R}^{(r)}_{[0,L]}\bigr)\bar{\sf P}_{r\ttsm\cosh(L)}.
\end{equation}
Following \cite{cqr}, we decompose $\Theta^{(r)}_{[-L,L]}$ in the following way:
\begin{equation}\label{eq:thetaOmega}
\Theta^{(r)}_{[-L,L]}=\bigl(e^{-L{\sf D}}-{\sf R}^{(r)}_{[-L,0]}\bigr)\bar{\sf P}_{r}\bigl(e^{-L{\sf D}}-{\sf R}^{(r)}_{[0,L]}\bigr)-\Omega^{(r)}_L,
\end{equation}
where
\begin{multline}\label{eq:errkernel}
\Omega^{(r)}_L=\bigl(e^{-L{\sf D}}-{\sf R}^{(r)}_{[-L,0]}\bigr)\bar{\sf P}_{r}\bigl(e^{-L{\sf D}}-{\sf R}^{(r)}_{[0,L]}\bigr)\\
-\bar{\sf P}_{r\ttsm\cosh(L)}\bigl(e^{-L{\sf D}}-{\sf R}^{(r)}_{[-L,0]}\bigr)\bar{\sf P}_{r}\bigl(e^{-L{\sf D}}-{\sf R}^{(r)}_{[0,L]}\bigr)\bar{\sf P}_{r\ttsm\cosh(L)}.
\end{multline}
The idea is that $\Omega^{(r)}_L$ is an error term which goes to 0 as $L\to \infty$. This is the content of the next result, whose proof we defer to Appendix \ref{sec:pflemerr}:

\begin{lem}\label{lem:errkernel}
Assume $r>0$. Then $\wt\Omega^{(r)}_L:=e^{L{\sf D}}\KGN\Omega^{(r)}_Le^{L{\sf D}}\KGN\xrightarrow[L\to\infty]{}0$ in trace norm.
\end{lem}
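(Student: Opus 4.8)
The plan is to split the height cutoff out of $\Omega^{(r)}_L$, reduce the trace norm to a Hilbert--Schmidt norm using that everything is finite rank, and then show that the cutoff factor is negligible while the remaining factors stay bounded uniformly in $L$. Put $c_L=r\ttsm\cosh(L)$ (so $c_L\to\infty$ as $L\to\infty$, which is where the hypothesis $r>0$ enters) and $A_L=\bigl(e^{-L{\sf D}}-{\sf R}^{(r)}_{[-L,0]}\bigr)\bar{\sf P}_{r}\bigl(e^{-L{\sf D}}-{\sf R}^{(r)}_{[0,L]}\bigr)$, so that $\Omega^{(r)}_L=A_L-\bar{\sf P}_{c_L}A_L\bar{\sf P}_{c_L}$ by \eqref{eq:errkernel}. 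Writing $\bar{\sf P}_{c_L}={\sf I}-{\sf P}_{c_L}$, with ${\sf P}_{c_L}$ the projection onto $(c_L,\infty)$, yields the algebraic identity $\Omega^{(r)}_L={\sf P}_{c_L}A_L+\bar{\sf P}_{c_L}A_L{\sf P}_{c_L}$ and hence
\[\wt\Omega^{(r)}_L=\bigl(e^{L{\sf D}}\KGN{\sf P}_{c_L}\bigr)\bigl(A_Le^{L{\sf D}}\KGN\bigr)+\bigl(e^{L{\sf D}}\KGN\bar{\sf P}_{c_L}A_L\bigr)\bigl({\sf P}_{c_L}e^{L{\sf D}}\KGN\bigr).\]
Since $e^{L{\sf D}}\KGN$ has rank at most $N$ so does $\wt\Omega^{(r)}_L$, hence $\|\wt\Omega^{(r)}_L\|_1\le\sqrt N\,\|\wt\Omega^{(r)}_L\|_2$; it is therefore enough to prove $\|\wt\Omega^{(r)}_L\|_2\to0$, and using $\|AB\|_2\le\|A\|_2\|B\|_{\mathrm{op}}$ and $\|AB\|_2\le\|A\|_{\mathrm{op}}\|B\|_2$ it suffices to establish \emph{(i)} $\|{\sf P}_{c_L}e^{L{\sf D}}\KGN\|_2=\|e^{L{\sf D}}\KGN{\sf P}_{c_L}\|_2\to0$ and \emph{(ii)} $\sup_{L\ge1}\bigl(\|A_Le^{L{\sf D}}\KGN\|_{\mathrm{op}}+\|e^{L{\sf D}}\KGN\bar{\sf P}_{c_L}A_L\|_{\mathrm{op}}\bigr)<\infty$.

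For \emph{(i)} I would use the elementary uniform bound $|\varphi_n(x)|\le C_N e^{-x^2/4}$ for $n\le N-1$ (immediate from $\varphi_n=e^{-x^2/2}p_n$ with $\deg p_n=n$), which by \eqref{eq:etDKGN} gives $|e^{L{\sf D}}\KGN(x,y)|\le N C_N^2\ts e^{L(N-1)}e^{-(x^2+y^2)/4}$; integrating, $\|e^{L{\sf D}}\KGN{\sf P}_{c_L}\|_2^2\le C'_N\ts e^{2L(N-1)}\int_{c_L}^{\infty}e^{-x^2/2}\,dx\le C'_N\ts e^{2L(N-1)-c_L^2/2}$, which tends to $0$ because $c_L^2=r^2\cosh^2(L)\ge\tfrac{r^2}{4}e^{2L}$.

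For \emph{(ii)} I would first use the identities $e^{\mp L{\sf D}}e^{\pm L{\sf D}}\KGN=\KGN$ (which follow from \eqref{eq:etDKGN} and the orthonormality of the $\varphi_n$, exactly as in the derivation of \eqref{eq:detRewr}) to rewrite $A_Le^{L{\sf D}}\KGN=\bigl(e^{-L{\sf D}}-{\sf R}^{(r)}_{[-L,0]}\bigr)\bar{\sf P}_r\bigl(\KGN-{\sf R}^{(r)}_{[0,L]}e^{L{\sf D}}\KGN\bigr)$ and $e^{L{\sf D}}\KGN A_L=\bigl(\KGN-e^{L{\sf D}}\KGN{\sf R}^{(r)}_{[-L,0]}\bigr)\bar{\sf P}_r\bigl(e^{-L{\sf D}}-{\sf R}^{(r)}_{[0,L]}\bigr)$, together with $e^{L{\sf D}}\KGN\bar{\sf P}_{c_L}A_L=e^{L{\sf D}}\KGN A_L-\bigl(e^{L{\sf D}}\KGN{\sf P}_{c_L}\bigr)A_L$. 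Since $e^{-L{\sf D}},\bar{\sf P}_r,\KGN,{\sf P}_{c_L}$ are contractions and $\|e^{L{\sf D}}\KGN{\sf P}_{c_L}\|_{\mathrm{op}}\to0$ by \emph{(i)}, \emph{(ii)} reduces to uniform (over $L\ge1$) bounds on $\|{\sf R}^{(r)}_{[-L,0]}\|_{\mathrm{op}}$, $\|{\sf R}^{(r)}_{[0,L]}\|_{\mathrm{op}}$, $\|{\sf R}^{(r)}_{[0,L]}e^{L{\sf D}}\KGN\|_{\mathrm{op}}$ and $\|e^{L{\sf D}}\KGN{\sf R}^{(r)}_{[-L,0]}\|_{\mathrm{op}}$. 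The first two are routine: reading the explicit formula \eqref{eq:reflR} with $\ell_1=-L,\ts\ell_2=0$ (resp. $\ell_1=0,\ts\ell_2=L$; recall $\alpha=\tfrac14e^{2\ell_1}$, $\beta=\tfrac14e^{2\ell_2}$), the kernel is a Gaussian in $(x,y)$ whose quadratic form has $x^2$- and $y^2$-coefficients $\le-\tfrac12$ uniformly for $L\ge1$, cross term $O(e^{-L})$, and whose $L$-growing pieces (the term $r^2(\beta-\alpha)$ against the square $(\cdots)^2/(4(\beta-\alpha))$) cancel; hence each kernel is dominated by a fixed Gaussian and both operators are uniformly Hilbert--Schmidt.

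The crux, and the step I expect to be the main obstacle, is the uniform boundedness of $\|{\sf R}^{(r)}_{[0,L]}e^{L{\sf D}}\KGN\|_{\mathrm{op}}$ and $\|e^{L{\sf D}}\KGN{\sf R}^{(r)}_{[-L,0]}\|_{\mathrm{op}}$: in $e^{L{\sf D}}\KGN=\sum_{n=0}^{N-1}e^{Ln}\varphi_n\otimes\varphi_n$ the $n$-th mode is amplified by $e^{Ln}$, and this has to be \emph{exactly} absorbed by the decay of the matching Hermite mode of the reflection kernel. I would bound these in Hilbert--Schmidt norm: writing $e^{L{\sf D}}\KGN{\sf R}^{(r)}_{[-L,0]}(x,y)=\sum_{n=0}^{N-1}e^{Ln}\varphi_n(x)\ts g_n(y)$ with $g_n(y)=\langle\varphi_n,{\sf R}^{(r)}_{[-L,0]}(\cdot,y)\rangle$, orthonormality of the $\varphi_n$ gives $\|e^{L{\sf D}}\KGN{\sf R}^{(r)}_{[-L,0]}\|_2^2=\sum_{n=0}^{N-1}e^{2Ln}\|g_n\|_2^2$, so it suffices that $\|g_n\|_2\le C_n e^{-Ln}$ uniformly in $L\ge1$. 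Reading \eqref{eq:reflR} as a function of the integration variable $z$, one has $\varphi_n(z){\sf R}^{(r)}_{[-L,0]}(z,y)=p_n(z)\,e^{-(1+\rho_L)z^2+\sigma_L(y)z+\tau_L(y)}$ with $\rho_L\ge0$, $\rho_L=O(e^{-2L})$, $\sigma_L(y)=O(e^{-L})$ (affine in $y$), and $e^{\tau_L(y)}\le C e^{-y^2/4}$ uniformly; then, rescaling $z\mapsto z/\sqrt{1+\rho_L}$ (which expresses $p_n$ in terms of $p_m$, $m\le n$ of the same parity, with coefficients of order $|\lambda^2-1|^{(n-m)/2}=O(e^{-L(n-m)})$, $\lambda=1/\sqrt{1+\rho_L}$) and using the exact Hermite--Gaussian identity $\int e^{-w^2+bw}p_m(w)\,dw=\tfrac{\pi^{1/4}}{\sqrt{2^m m!}}\,b^m e^{b^2/4}$ — a pure monomial in $b$, by orthogonality of the $p_m$ — each contribution is $O(e^{-L(n-m)})\cdot O(e^{-Lm})=O(e^{-Ln})$, so $g_n(y)=O(e^{-Ln})\,\mathrm{poly}(y)\,e^{-y^2/4}$ and $\|g_n\|_2\le C_n e^{-Ln}$ as needed; the estimate for ${\sf R}^{(r)}_{[0,L]}e^{L{\sf D}}\KGN$ is analogous, expanding now in the second argument of \eqref{eq:reflR}. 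Combining the pieces, $\|\wt\Omega^{(r)}_L\|_2\le C\,\|e^{L{\sf D}}\KGN{\sf P}_{c_L}\|_2\to0$, hence $\|\wt\Omega^{(r)}_L\|_1\to0$. The delicate point is precisely making the $O(e^{-Ln})$ matching work for every $n<N$, which forces the use of the monomial form of the Hermite--Gaussian overlap together with careful bookkeeping of the $L$-dependence of all the quantities in \eqref{eq:reflR}.
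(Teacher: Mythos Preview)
Your argument is correct, but it takes a different and noticeably more laborious route than the paper's.  The paper factors $\wt\Omega^{(r,1)}_L=\Upsilon_1\Upsilon_2$ with $\Upsilon_1=e^{L{\sf D}}\KGN{\sf P}_{c_L}(e^{-L{\sf D}}-{\sf R}^{(r)}_{[-L,0]})\bar{\sf P}_r$ and $\Upsilon_2=\bar{\sf P}_r(e^{-L{\sf D}}-{\sf R}^{(r)}_{[0,L]})\bar{\sf P}_{c_L}e^{L{\sf D}}\KGN$, and simply bounds $\|\Upsilon_1\|_2\le c_1e^{NL-c_2e^{2L}}$ and $\|\Upsilon_2\|_2\le c_1e^{NL}$ separately: the factor carrying ${\sf P}_{c_L}$ provides doubly-exponential decay, and the other is allowed to blow up polynomially in $e^L$; no delicate cancellation between $e^{Ln}$ and the reflection kernel is ever invoked.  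By contrast, you insist that the ``non-cutoff'' factor $A_Le^{L{\sf D}}\KGN$ stay \emph{uniformly bounded}, which forces the $e^{-Ln}$ matching in $\|g_n\|_2$.

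That matching is correct, but you should note that it is precisely the content of Lemma~\ref{lem:inkernel}: there one shows the \emph{exact} identities $e^{L{\sf D}}\KGN{\sf R}^{(r)}_{[-L,0]}=\KGN\varrho_r$ and ${\sf R}^{(r)}_{[0,L]}e^{L{\sf D}}\KGN=\varrho_r\KGN$, so both operators you struggle with are in fact $L$-independent with operator norm $1$.  Your Hermite--Gaussian computation (rescaling $z\mapsto z/\sqrt{1+\rho_L}$, expanding $p_n(\lambda\,\cdot)$ in lower Hermite modes, and using the monomial overlap) is essentially a redo of that proof at the level of bounds rather than identities.  So the trade-off is: the paper's argument is shorter and entirely self-contained, while yours---once you recognize that the ``crux'' step is Lemma~\ref{lem:inkernel}---becomes a one-line invocation and gives the conceptually sharper statement that the non-cutoff factors are genuinely bounded.
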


Since the mapping ${\sf A}\longmapsto\det({\sf I}+{\sf A})$ is continuous with respect to the trace norm, the lemma together with \eqref{eq:detRewr} and \eqref{eq:thetaOmega} show that if
\begin{equation}\label{eq:lambda}
  \Lambda:=\lim_{L\to \infty}\Big[\KGN-e^{L{\sf D}}\KGN\bigl(e^{-L{\sf D}}-{\sf R}^{(r)}_{[-L,0]}\bigr)\bar{\sf P}_{r}\bigl(e^{-L{\sf D}}-{\sf R}^{(r)}_{[0,L]}\bigr)e^{L{\sf D}}\KGN\Big]
\end{equation}
exists in the trace class topology, then
\begin{equation}\label{eq:Llimitkernel}
\pp\left(\lambda_N(t)\leq r\ttsm\cosh(t)~\forall\,t\in\rr\right)=\det\!\big({\sf I}-\Lambda\big).
\end{equation}
But, as we will see next, the operator inside the brackets in \eqref{eq:lambda} in fact does not depend on $L$ (the analogous property was proved in \cite{cqr} in the setting of the Airy$_2$ process).
The key step is the following result:

\begin{lem}\label{lem:inkernel}
For all $L>0$,
\begin{equation}
e^{L{\sf D}}\KGN\,{\sf R}^{(r)}_{[-L,0]}=\KGN\,\varrho_r\ \qqand
{\sf R}^{(r)}_{[0,L]}e^{L{\sf D}}\KGN=\varrho_r\KGN,
\end{equation}
where $\varrho_r$ is the reflection operator $\varrho_rf(x)=f(2r-x)$.
\end{lem}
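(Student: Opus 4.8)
The plan is to prove the two identities by direct computation of the integral kernels on the left-hand sides, recognizing them as the kernels of $\KGN\varrho_r$ and $\varrho_r\KGN$ respectively. I will focus on the first identity, $e^{L{\sf D}}\KGN\,{\sf R}^{(r)}_{[-L,0]}=\KGN\,\varrho_r$; the second follows by an entirely parallel calculation (or, since ${\sf R}^{(r)}_{[0,L]}$ and ${\sf R}^{(r)}_{[-L,0]}$ are related by swapping the roles of the two endpoints, by taking adjoints together with the symmetry of $\KGN$ and the elementary relation $\varrho_r^*=\varrho_r$). The starting point is the explicit formula \eqref{eq:reflR} for ${\sf R}^{(r)}_{[-L,0]}(x,y)$, specialized to $\ell_1=-L$, $\ell_2=0$, so that $\alpha=\tfrac14 e^{-2L}$ and $\beta=\tfrac14$; combined with the kernel \eqref{eq:etDKGN} for $e^{L{\sf D}}\KGN$, the left-hand side becomes $\int_\rr dz\,\sum_{n=0}^{N-1}e^{Ln}\varphi_n(x)\varphi_n(z)\,{\sf R}^{(r)}_{[-L,0]}(z,y)$.

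First I would massage the Gaussian exponent in ${\sf R}^{(r)}_{[-L,0]}(z,y)$. The point is that, after collecting terms, the $z$-dependence in ${\sf R}^{(r)}_{[-L,0]}(z,y)$ should organize into a single Gaussian in the variable $e^{-L}z$ (equivalently $z$ rescaled), with a width that, after multiplying by the factor $e^{-z^2/2}$ hidden in $\varphi_n(z)=e^{-z^2/2}p_n(z)$, collapses — as $\alpha=\tfrac14e^{-2L}\to 0$ is NOT what happens here, since $L$ is fixed; rather, the algebra is exact for every $L$. Concretely, I expect the combination $e^{-z^2/2}\cdot{\sf R}^{(r)}_{[-L,0]}(z,y)$ to reduce, after completing the square, to $C(y)\,e^{-(e^{-L}z - \text{(affine in }y))^2/(2\cdot\text{const})}$ times an explicit prefactor. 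The key algebraic miracle one is counting on — and this is where I would check the bookkeeping most carefully — is that the $e^{Ln}$ factors are exactly absorbed: using $\varphi_n(e^{-L}z\cdot e^{L}) $-type scaling identities for Hermite functions, or more cleanly using the reproducing/semigroup structure $e^{L{\sf D}}\KGN(x,z)$ acting against a Gaussian of the right variance reproduces $\varphi_n$ evaluated at a reflected argument. In other words, I would recognize the $z$-integral as an instance of the Mehler-type formula: integrating $e^{L{\sf D}}\KGN(x,z)$ against the appropriate normalized Gaussian kernel in $z$ centered so as to encode the reflection $z\mapsto 2r - (\text{image of }y)$ returns $\sum_{n=0}^{N-1}\varphi_n(x)\varphi_n(2r-y)=\KGN(x,2r-y)=\KGN\varrho_r(x,y)$.

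The cleanest route, which avoids ever touching Hermite polynomials explicitly, is operator-theoretic: rewrite ${\sf R}^{(r)}_{[-L,0]}$ as a composition $G_{\ell_1}\,\varrho_r\,G'$ of an explicit Gaussian (heat-type) operator, a reflection, and another Gaussian, using the reflection-principle origin of the kernel — the reflection term in \eqref{eq:reflR} is, after all, built from a Brownian bridge reflected across the barrier at $\tfrac12 r$, which in the rescaled heat-equation variables is a genuine reflection of the spatial coordinate. Then the identity becomes $e^{L{\sf D}}\KGN\,G_{\ell_1} = \KGN e^{-\text{something}}$ composed with the fact that $\varrho_r$ commutes appropriately, all of which reduces to the already-noted semigroup identities $e^{L{\sf D}}\KGN e^{-L{\sf D}}\KGN = \KGN$ and $e^{t{\sf D}}\KGN$ being an honest integral operator for all $t$ (from \eqref{eq:etDKGN}).

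The main obstacle I anticipate is purely computational stamina rather than conceptual: carrying the Gaussian exponent from \eqref{eq:reflR}, which is a fairly unwieldy expression with the $r^2(\beta-\alpha)$ term, the cross term $-r(e^{\ell_2}y - e^{\ell_1}x)$, and the squared term with the shift $-2r(\alpha+\beta)-r$, through the $z$-integration while keeping track of the conjugating factors $e^{\frac12(y^2-x^2)}$ and the Gaussian weights in $\varphi_n$, and verifying that everything cancels to leave precisely $\KGN(x,2r-y)$ with no leftover exponential prefactor. I would organize this by first checking the special case $N=1$ (where $\varphi_0$ is itself Gaussian and the integral is a pure Gaussian convolution) to pin down all constants, and then arguing the general $N$ case via the Mehler-kernel/semigroup identity rather than term-by-term. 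A secondary point requiring a line of justification is that the interchange of the finite sum $\sum_{n=0}^{N-1}$ with the $z$-integral is trivially valid (finitely many bounded, rapidly-decaying terms), and that the resulting operator is indeed trace class (immediate, since $\KGN\varrho_r$ has finite rank $N$).
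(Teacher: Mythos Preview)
Your proposal is correct, and your primary route---direct computation of the $z$-integral---is exactly what the paper does. The paper's implementation uses the contour-integral representation \eqref{eq:integhermite} of $\varphi_n$: the $z$-integral then becomes a bare Gaussian integral sitting inside the contour integral, and after evaluating it a change of variables $t\mapsto te^L$ in the contour variable absorbs the $e^{Ln}$ factor and produces $\varphi_n(2r-y)$ on the nose. This is precisely the ``Mehler-type'' cancellation you anticipate, just packaged via the generating function rather than via the Mehler kernel itself.

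Your second, operator-theoretic, suggestion is in fact available in a sharper form than you state, and is cleaner than the paper's argument. One does not need a three-factor decomposition $G_{\ell_1}\varrho_r G'$: comparing exponents directly (the constants $a$, $b_y$, $c_y$ that the paper records already make this transparent) gives the \emph{exact} identity
\[
{\sf R}^{(r)}_{[-L,0]}(x,y)=e^{-L{\sf D}}(x,2r-y),\qquad\text{i.e.}\quad {\sf R}^{(r)}_{[-L,0]}=e^{-L{\sf D}}\varrho_r,
\]
since the exponent $-ax^2+b_yx+c_y$ is nothing but the Mehler exponent $-\frac{(1+\rho^2)(x^2+(2r-y)^2)-4\rho x(2r-y)}{2(1-\rho^2)}$ with $\rho=e^{-L}$. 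Granting this, the first identity in the lemma is immediate from $e^{L{\sf D}}\KGN e^{-L{\sf D}}=\KGN$, and the second follows the same way from ${\sf R}^{(r)}_{[0,L]}=\varrho_r e^{-L{\sf D}}$. The paper does not isolate this operator identity and instead pushes the computation through term by term with the contour integral; your route bypasses any explicit appearance of Hermite polynomials.
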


Using this lemma and the fact that $e^{L{\sf D}}\KGN\ts e^{-L{\sf D}}=\KGN$ we get
\begin{multline}
  \KGN-e^{L{\sf D}}\KGN\bigl(e^{-L{\sf D}}-{\sf R}^{(r)}_{[-L,0]}\bigr)\bar{\sf P}_{r}\bigl(e^{-L{\sf D}}-{\sf R}^{(r)}_{[0,L]}\bigr)e^{L{\sf D}}\KGN\\
  =\KGN-\KGN({\sf I}-\varrho_r)\bar{\sf P}_{r}({\sf I}-\varrho_r)\KGN=\KGN\varrho_r\KGN,
\end{multline}
where the second equality follows from the identity $({\sf I}-\varrho_r)\bar {\sf P}_r({\sf I}-\varrho_r)={\sf I}-\varrho_r$.
In view of \eqref{eq:lambda} and \eqref{eq:Llimitkernel}, this yields Proposition \ref{prop:dbm-formula}.
All that is left to prove then is Lemma \ref{lem:inkernel}.

\begin{proof}[Proof of Lemma \ref{lem:inkernel}]
We will only provide the proof of the first formula, the second one is very similar.
Using \eqref{eq:reflR} we write the kernel of the operator ${\sf R}^{(r)}_{[-L,0]}$ as
\begin{equation}
{\sf R}^{(r)}_{[-L,0]}(x,y)=\tfrac{1}{\sqrt{\pi(1-e^{-2L})}}e^{-ax^2+b_yx+c_y}
\end{equation}
with
\begin{equation}
a=\tfrac{1+e^{-2L}}{2(1-e^{-2L})},\qquad b_y=\tfrac{2e^{-L}(2r-y)}{1-e^{-2L}}\qqand c_y=-\tfrac{(1+e^{-2L})(2r-y)^2}{2(1-e^{-2L})}.
\end{equation}
This formula together with \eqref{eq:etDKGN} and the contour integral representation of the Hermite function $\varphi_n(x)$,
\begin{equation}\label{eq:integhermite}
\varphi_n(x)=(2^nn!\sqrt{\pi})^{-1/2}e^{-x^2/2}\frac{n!}{2\pi i}\oint dt \frac{e^{2tx-t^2}}{t^{n+1}}
\end{equation}
(where the contour of integration encircles the origin), gives us
\begin{multline}
e^{L{\sf D}}\KGN {\sf R}^{(r)}_{[-L,0]}(x,y)=\int_{\rr}dz\sum_{n=0}^{N-1}e^{Ln}\varphi_n(x)\varphi_n(z){\sf R}^{(r)}_{[-L,0]}(z,y)\\
=\tfrac{1}{\sqrt{\pi(1-e^{-2L})}}\sum_{n=0}^{N-1}e^{Ln}\varphi_n(x)(2^nn!\sqrt{\pi})^{-1/2}\frac{n!}{2\pi i}\oint dt\frac{e^{-t^2}}{t^{n+1}}\int_{\rr}dz\,e^{-z^2/2+2tz-az^2+b_yz+c_y}.
\end{multline}
The $z$ integral is just a Gaussian integral, and computing it the last expression becomes
\begin{multline}
\sum_{n=0}^{N-1}e^{Ln}\varphi_n(x)(2^nn!\sqrt{\pi})^{-1/2}\frac{n!}{2\pi i}\oint dt\frac{e^{-e^{-2L}t^2+2e^{-L}t(2r-y)-(2r-y)^2/2}}{t^{n+1}}\\
=\sum_{n=0}^{N-1}e^{Ln}\varphi_n(x)(2^nn!\sqrt{\pi})^{-1/2}\frac{n!}{2\pi i}\oint dt\frac{e^{-t^2+2t(2r-y)-(2r-y)^2/2}}{t^{n+1}e^{Ln}},
\end{multline}
where we have performed the change of variables $t\mapsto t\ts e^L$.
The last integral and its prefactor are nothing but $\varphi_n(2r-y)$, so this yields $e^{L{\sf D}}\KGN {\sf R}^{(r)}_{[-L,0]}(x,y)=\KGN(x,2r-y)$ as needed.
\end{proof}

% subsection hyperbolic_cosine_barrier (end)

\section{Connection with LOE} % (fold)
\label{sec:connection_with_loe}

This section is devoted to the proof of the following result:

\begin{prop}\label{prop:identity}
For $r\geq0$,
  \[\det\!\left({\sf I}-\KGN \varrho_r \KGN\right)_{L^2(\rr)}=F_{{\rm LOE},N}(2r^2).\]
\end{prop}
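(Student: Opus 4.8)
The plan is to evaluate both sides as explicit $N\times N$ determinants and match them.

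\emph{The left-hand side.} Since $\KGN$ is the orthogonal projection onto $V_N=\mathrm{span}\{\varphi_0,\dots,\varphi_{N-1}\}$, the operator $\KGN\varrho_r\KGN$ has rank at most $N$, so its Fredholm determinant reduces to a finite one,
\[
\det\!\left({\sf I}-\KGN\varrho_r\KGN\right)_{L^2(\rr)}=\det\!\left({\sf I}_N-M\right),\qquad
M_{jk}=\langle\varphi_j,\varrho_r\varphi_k\rangle=\int_\rr\varphi_j(x)\varphi_k(2r-x)\,dx,
\]
$0\le j,k\le N-1$, with $M$ symmetric. Using the parity $\varphi_k(2r-x)=(-1)^k\varphi_k(x-2r)$, the entry $M_{jk}$ is, up to the sign $(-1)^k$, the matrix element $\langle\varphi_j,e^{-2r\,\p_x}\varphi_k\rangle$ of the translation semigroup. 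Writing $\p_x$ through the creation/annihilation operators associated with $\{\varphi_n\}$ and applying the Baker--Campbell--Hausdorff identity gives the classical closed form in terms of associated Laguerre polynomials: for $j\ge k$,
\[
M_{jk}=(-1)^k e^{-r^2}\sqrt{\tfrac{k!}{j!}}\,\bigl(\sqrt2\,r\bigr)^{\,j-k}L_k^{(j-k)}(2r^2),
\]
and $M_{kj}=M_{jk}$; note the argument $2r^2$, matching the right-hand side. A diagonal conjugation, which leaves $\det({\sf I}_N-M)$ unchanged, then normalizes $M$.

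\emph{The right-hand side.} For $a=0$ one has
\[
F_{{\rm LOE},N}(2r^2)=\frac1{Z_N}\int_{(0,2r^2)^N}\prod_{1\le i<j\le N}|\lambda_i-\lambda_j|\prod_{i=1}^N e^{-\lambda_i/2}\,d\vec\lambda .
\]
This is a $\beta=1$ (Pfaffian) quantity. I would apply de Bruijn's integration identity to write it as a Pfaffian built from the cut-off Laguerre weight $e^{-\lambda/2}\mathbf 1_{(0,2r^2)}$, and then show that this Pfaffian collapses to a single $N\times N$ determinant. The mechanism should parallel the one by which the GOE Tracy--Widom distribution (also a $\beta=1$ object) is written as the scalar Fredholm determinant \eqref{eq:GOE}: substituting $\lambda=2y^2$ (so $\lambda<2r^2\iff 0<y<r$) turns the moment integrals into Gaussian integrals of Hermite functions on $(0,r)$, and doubling the integration region to $(-r,r)$ via the reflection $y\mapsto -y$ — the discrete avatar of $\varrho_r$ — turns the sign-kernel $\mathbf 1_{x<y}$ built into the Pfaffian into a projection-type structure that collapses the Pfaffian to a determinant; the quadratic transformations $H_{2n}(y)\propto L_n^{(-1/2)}(y^2)$, $H_{2n+1}(y)\propto y\,L_n^{(1/2)}(y^2)$ reconcile the monomial/Laguerre basis natural for the LOE with the Hermite functions. (Alternatively one may invoke a known determinantal representation of $F_{{\rm LOE},N}$ with $a=0$, e.g.\ via the Forrester--Rains decimation identities realizing the odd-indexed eigenvalues of a real $\beta=1$ Laguerre ensemble as those of a $\beta=2$ determinantal ensemble, so that $\{\lambda_{\rm LOE}(N)\le 2r^2\}$ becomes a gap probability of a determinantal process.)

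\emph{Matching, and the main obstacle.} Finally I would identify the resulting $N\times N$ determinant with $\det({\sf I}_N-M)$ from the first step. Passing between integral and determinant forms is handled by the Andr\'eief (Cauchy--Binet) identity; the changes of basis among monomials, Laguerre polynomials and Hermite functions are triangular, so this identification is a sequence of row and column operations, the only genuinely arithmetical content being to match the normalizing constant $Z_N$ against the Gaussian prefactors $e^{-r^2}$ and powers of $\sqrt2$ produced along the way. I expect the main obstacle to be precisely the collapse of the $\beta=1$ Pfaffian to a single determinant: the left-hand side is manifestly a $\beta=2$ object, so the heart of the proof is to exhibit this collapse in the correct form (and to control the constants through the chain of substitutions). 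The structural reason it works — the reflection $\varrho_r$ and the argument $2r^2$ on the left matching the $\lambda=2y^2$ substitution and the $y\mapsto-y$ symmetry on the right — is transparent, but carrying it out cleanly, and choosing the more economical of the two routes above, is where the work lies.
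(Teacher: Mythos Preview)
Your reduction of the left-hand side to the finite determinant $\det(I_N-H)$ with $H_{jk}=\int_\rr\varphi_j(x)\varphi_k(2r-x)\,dx$ is exactly what the paper does, and your alternative route for the right-hand side via the Forrester--Rains decimation is in fact the route the paper takes. However, there is a genuine gap in your understanding of what the matching step actually requires.

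The decimation identity expresses $F_{{\rm LOE},N}(2r^2)^2$, not $F_{{\rm LOE},N}(2r^2)$ itself, as a gap probability of a determinantal process; after passing to finite matrices one obtains $F_{{\rm LOE},N}(2r^2)^2=\det(I-L-R_1\otimes R_2)$ with $L$ a symmetric matrix built from Laguerre functions and $R_1\otimes R_2$ a rank-one correction. So the identity to be proved is $\det(I-H)^2=\det(I-L-R_1\otimes R_2)$, and this is \emph{not} a sequence of row and column operations or a constant bookkeeping exercise. The paper introduces an auxiliary matrix $\wt H$ (anti-triangular, built from differences of Laguerre functions at $2r^2$) and proves a package of algebraic facts: $\wt H$ is conjugate to $H$; $\wt H^2=L$; $R_1=\wt Hu$ and $R_2=(I-\wt H)v$ for explicit vectors $u,v$; and a differential relation $\partial_r\wt H=Q\wt H$ for a fixed lower-triangular $Q$. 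These reduce the squared identity to $\det(I-\wt H)=\det(I+\wt H)\bigl(1-\langle u,(I+\wt H)^{-1}\wt Hv\rangle\bigr)$, which is then established not by direct manipulation but by taking logarithms, differentiating in $r$, and matching the resulting trace identities (in the spirit of the Ferrari--Spohn proof of \eqref{eq:GOE}), with the boundary condition checked at $r\to\infty$. Your claim that ``the only genuinely arithmetical content [is] to match the normalizing constant'' misses this entirely: the $\beta=1\to\beta=2$ passage here manifests as a nontrivial quadratic relation between finite matrices, and the paper's proof hinges on the discovery of $\wt H$ and its properties.

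As for your primary route (de Bruijn $\to$ Pfaffian $\to$ collapse via $\lambda=2y^2$ and reflection), the analogy with the GOE/Sasamoto--Ferrari--Spohn mechanism is suggestive, but you have not shown that the sign-kernel structure actually collapses in the LOE case, and the paper gives no indication that it does in any direct way; the squaring and the rank-one term $R_1\otimes R_2$ on the LOE side are precisely the residue of the $\beta=1$ structure that survives, and handling them is the content of the proof.
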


Together with Proposition \ref{prop:dbm-formula}, this proposition implies Theorem \ref{thm:dbmLOE}.

Let us start by introducing an explicit formula for $F_{{\rm LOE},N}$.
To that end, we will utilize the ensemble $\bar\lambda(1)<\bar\lambda(2)<\dotsm<\bar\lambda(N)$ obtained as the result of superimposing the eigenvalues of two independent copies of our LOE matrices, writing them in increasing order, and then keeping only the even labelled coordinates (i.e. keeping the largest, 3\textsuperscript{rd} largest, 5\textsuperscript{th} largest, and so on).
Observe that if $\lambda_{\rm LOE}(N)$ denotes the largest eigenvalue of an LOE matrix as in Section \ref{sub:goe_and_loe}, then
\begin{equation}\label{eq:LOELUE}
  \pp(\lambda_{\rm LOE}(N)\leq2r^2)^2=\pp(\bar \lambda(N) \leq 2r^2).  
\end{equation}
The advantage of this representation is that the superimposed ensemble $(\bar \lambda(i))_{i=1,\dotsc,N}$ is a determinantal process with a simple correlation kernel $\wtKLN$ (see \cite{forresterRains-decim}). The kernel $\wtKLN$ is given as follows. For $n\in\nn$, introduce the \emph{Laguerre function}
\begin{equation}\label{eq:def-psi}
  \psi_n(x)=e^{-x/2}L_n(x),
\end{equation}
where $L_n(x)$ is the $n$-th normalized Laguerre polynomial (so that $\|\psi_n\|_2=1$), and then define the \emph{Laguerre kernel} as  
\begin{equation}
\KLN(x,y)=\sum_{n=0}^{N-1}\psi_n(x)\psi_n(y).
\end{equation}
Then
\begin{equation}\label{eq:wtKLN}
\wtKLN(x,y)=-\frac{\partial}{\partial x}\int_0^y du\,\KLN(x,u).
\end{equation}
The determinantal structure of the superimposed ensemble leads directly to a formula for the distribution of $\bar \lambda(N)$ (see \cite{johansson} or \cite[(1.36)]{quastelRem-review}):
\begin{equation}\label{eq:ppLUE}
  \pp( \bar\lambda(N)\leq 2r^2)=\det\!\left({\sf I}-{\sf P}_{2r^2}\wtKLN {\sf P}_{2r^2}\right)_{L^2(\rr)}.
\end{equation}

Observe that $\wtKLN$ is a finite rank operator, and thus the last determinant can be represented as the determinant of a finite matrix.
More precisely, if we factor our operator as $\wtKGN={\sf K}_1{\sf K}_2$ with ${\sf K}_1\!:\ell^2(\{0,\dotsc,N-1\}) \longrightarrow L^2(\rr)$ and ${\sf K}_2\!:L^2(\rr)\longrightarrow \ell^2(\{0,\dotsc,N-1\})$ defined by the kernels
${\sf K}_1(x,n)=-\psi_n'(x)$ and ${\sf K}_2(n,y)=\int_0^ydu\,\psi_n(u)$, then the cyclic property of the Fredholm determinant implies that the determinant in \eqref{eq:ppLUE} equals $\det({\sf I}-{\sf K}_2{\sf K}_1)$, so that
\begin{multline}
\det\bigl({\sf I}-{\sf P}_{2r^2}\wtKLN {\sf P}_{2r^2}\bigr)_{L^2(\rr)}=\det\Bigl[\delta_{jk}+\int_{2r^2}^\infty dx\,\psi_j'(x)\int_{0}^x du\,\psi_k(u)\Bigr]_{j,k=0}^{N-1}\\
=\det\Bigl[\delta_{jk}-\int_{2r^2}^\infty dx\,\psi_j(x)\psi_k(x)-\psi_j(2r^2)\int_{0}^{2r^2} du\,\psi_k(u)\Bigr]_{j,k=0}^{N-1},
\end{multline}
where in the second equality we have integrated by parts.
Defining now a symmetric matrix\footnote{As a notational guide, note that while we have used sans-serif fonts to denote operators acting on a Hilbert space and their associated kernels, we are using regular fonts to denote (finite) matrices.} $L\in\rr^{N\times N}$ and two column vectors $R_1,R_2\in\rr^{N}$ by
\begin{equation}\label{eq:LR1R2}
L_{jk}=\int_{2r^2}^\infty dx\,\psi_j(x)\psi_k(x),\quad (R_1)_j=\psi_j(2r^2)\ \qand\ (R_2)_j=\int_{0}^{2r^2}\!\! du\,\psi_j(u),
\end{equation}
for $j,k\in\{0,\ldots,N-1\}$, we deduce by the last identity, \eqref{eq:LOELUE} and \eqref{eq:ppLUE}, that
\begin{equation}\label{eq:LOEsq}
F_{{\rm LOE},N}(2r^2)^2=\det(I-L-R_1\otimes R_2).
\end{equation}

Similarly, we have a version of \eqref{eq:dbm-formula} in terms of the determinant of a finite matrix (which can be obtained by conjugating the kernel inside the Fredholm determinant in \eqref{eq:dbm-formula} by the operator ${\sf G}\!:L^2(\rr)\longrightarrow \ell^2(\{0,\dotsc,N-1\})$ with kernel ${\sf G}(n,x)=\varphi_n(x)$):
\begin{equation}\label{eq:KGN-H}
\det\bigl({\sf I}-\KGN\ts\varrho_r\KGN\bigr)_{L^2(\rr)}=\det(I-H),
\end{equation}
where the symmetric matrix $H$ has entries given by
\begin{equation}\label{eq:H}
  H_{jk}=\int_{\rr}dx\,\varphi_j(x)\varphi_k(2r-x).
\end{equation}
Therefore, and in view of \eqref{eq:LOEsq}, we see that, in order to prove Proposition \ref{prop:identity}, we have to establish that
\begin{equation}\label{eq:finite-dbmloe}
\det(I-H)^2=\det(I-L-R_1\otimes R_2).
\end{equation}

At this point the main difficulty in proving \eqref{eq:finite-dbmloe} lies in the fact that the two sides of the identity are given in terms of objects related to two different families of orthogonal polynomials, which makes it hard to relate one to the other.
So the first step in our proof of the identity consists in replacing the matrix $H$ on the left-hand side by a matrix defined in terms of Laguerre polynomials.

To this end, let us introduce the following $N\times N$ (real) matrix $\wt H$:
\begin{equation}\label{eq:wtH}
\wt H_{ij}=(-1)^N\left(\psi_{i+j-N}(2r^2)-\psi_{i+j-N+1}(2r^2)\right)\quad\text{for}\ i,j\in\{0,\ldots,N-1\}. 
\end{equation}
Here $\psi_n$ is the Laguerre function introduced in \eqref{eq:def-psi} for $n\geq0$, while we set $\psi_n\equiv0$ for $n<0$. Note in particular that $\wt H$ is zero above the anti-diagonal (i.e. $\wt H_{ij}=0$ if $i+j<N-1$).
This matrix will play a key role in the proof. As we will see in the next lemma, $\wt H$ is conjugate to $H$, so that $\det(I-H)=\det(I-\wt H)$. Moreover, we will see that the matrices $L$ and $R_1\otimes R_2$ are also intimately related to $\wt H$. In order to state the lemma we introduce a matrix $Q\in\rr^{N\times N}$ and two column vectors $u,v\in\rr^{N}$ by
\begin{equation}\label{eq:uvQ}
\begin{gathered}
  u=(-1)^{N-1}\uno{},\qquad v_i=(-1)^i2\quad\text{ for }i=0,\dotsc,N-1,\\
Q_{ij}=\begin{cases}
0 &\mbox{for } i<j,\\
-2r &\mbox{for } i=j,\\
-4r &\mbox{for } i>j,
\end{cases}
\qquad i,j=0,\ldots,N-1
\end{gathered}
\end{equation}
(here $\uno{}$ denotes the constant vector with 1 in each entry).

Note that the matrices and vectors introduced in this section are always indexed by $\{0,\dotsc,N-1\}$, and they generally depend on $N$ and $r$; we have omitted this dependence from the notation for simplicity.

\begin{lem}\label{lem:propmat}
Let $H$, $\wt H$, $L$, $R_1$, $R_2$, $Q$, $u$ and $v$ be defined as in \eqref{eq:LR1R2}, \eqref{eq:H}, \eqref{eq:wtH} and \eqref{eq:uvQ}.
Then the following properties hold:
\begin{enumerate}[label=\textnormal{(\roman{enumi})}]
\item\label{itm:1} $\wt H$ is conjugate to $H$, i.e. there exists an invertible matrix $S\in\rr^{N\times N}$ such that $\wt H=SHS^{-1}$.
\item\label{itm:2} $\wt H^2=L$.
\item\label{itm:3} $R_1=\wt Hu$ and $R_2=(I-\wt H)v$.
\item\label{itm:4} $\tfrac{\p}{\p r}\wt H=Q\wt H$.
\item\label{itm:5} $\tfrac{\p}{\p r}(I+\wt H)^{-1}=(I-\wt H^2)^{-1}\wt HQ+(I-\wt H^2)^{-1}E(I+\wt H)^{-1}$, where $E=4r\wt Hu\otimes u$.
\end{enumerate}
\end{lem}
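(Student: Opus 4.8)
The plan is to establish the five properties one at a time, using in each case the contour integral representations of the Hermite and Laguerre functions together with standard algebraic identities for their generating functions. The key preliminary observation is that the matrix $\wt H$ has been engineered so that its entries are differences of Laguerre functions $\psi_n(2r^2)$; since the Laguerre polynomials satisfy simple three-term and derivative recurrences, the differenced quantities $\psi_{n}(2r^2)-\psi_{n+1}(2r^2)$ should telescope nicely under the operations appearing in (ii)--(v). A secondary observation is that the anti-triangular structure of $\wt H$ (it vanishes strictly below the anti-diagonal, i.e. $\wt H_{ij}=0$ for $i+j<N-1$) means that many matrix products will again be controllably triangular, which is what makes identities like $\wt H^2 = L$ plausible despite $L$ being a genuinely two-sided integral.

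For \ref{itm:1}, the natural route is to write both $H$ and $\wt H$ explicitly using contour integrals: $H_{jk}=\int_\rr dx\,\varphi_j(x)\varphi_k(2r-x)$ can be evaluated by inserting \eqref{eq:integhermite} for both $\varphi_j$ and $\varphi_k$, performing the resulting Gaussian $x$-integral, and recognizing the double contour integral in terms of Laguerre functions after a change of variables relating the Hermite and Laguerre generating functions (the classical identity $H_{2n}(x)\sim L_n(x^2)$ up to constants). The conjugating matrix $S$ will be (up to normalization constants absorbing the orthonormalization of $p_n$ versus $L_n$) triangular, coming from expressing the monomial basis in terms of each polynomial family; the point is just that $S$ is invertible, and one tracks it through the computation. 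For \ref{itm:2}, I would compute $(\wt H^2)_{ij}=\sum_k \wt H_{ik}\wt H_{kj}$, expand each factor as a difference of two $\psi$'s, and use the Christoffel--Darboux-type summation $\sum_{n}\psi_n(x)\psi_n(y)$ reorganized via the recurrence to collapse the double sum; comparing with $L_{jk}=\int_{2r^2}^\infty \psi_j\psi_k$, which by Christoffel--Darboux equals a boundary term at $2r^2$ of the form (const)$\,[\psi_{j}\psi_{k+1}-\psi_{j+1}\psi_k]$-type expression, one matches the two sides. Property \ref{itm:3} is the most computational: $R_1=\wt H u$ with $u=(-1)^{N-1}\uno{}$ is a statement that summing the rows of $\wt H$ (with the alternating sign built into $(-1)^N$) telescopes to a single $\psi_j(2r^2)$, which follows from the telescoping differences; and $R_2=(I-\wt H)v$ requires the identity $\int_0^{2r^2}\psi_j(u)\,du = $ a combination of $\psi$'s evaluated at $2r^2$, which is exactly the integral of the Laguerre differential/recurrence relation $\int_0^x \psi_n = $ (something like) $2(\psi_0(0)\text{-type constant}) - 2\sum \ldots$, again telescoping.

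Properties \ref{itm:4} and \ref{itm:5} are differential identities in $r$, and here the chain rule produces $\tfrac{\p}{\p r}\psi_n(2r^2)=4r\,\psi_n'(2r^2)$; the Laguerre ODE / derivative recurrence $\psi_n'(x) = $ a linear combination of $\psi_0,\dots,\psi_n$ (schematically $L_n' = -\sum_{k<n} L_k$ type identities, adjusted for the $e^{-x/2}$ factor) turns $\tfrac{\p}{\p r}\wt H_{ij}$ into a sum over earlier indices, which upon reorganizing is precisely $(Q\wt H)_{ij}$ with the specific constants $-2r$ on the diagonal and $-4r$ below coming from the $4r$ prefactor times the $\pm\tfrac12$ and the telescoped count of terms. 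Then \ref{itm:5} follows by differentiating $(I+\wt H)^{-1}$ formally, $\tfrac{\p}{\p r}(I+\wt H)^{-1} = -(I+\wt H)^{-1}(\tfrac{\p}{\p r}\wt H)(I+\wt H)^{-1}$, substituting \ref{itm:4}, and then using \ref{itm:2} ($\wt H^2 = L$) together with the rank-one correction bookkeeping — the extra term $E = 4r\,\wt Hu\otimes u$ arises because the naive differentiation misses a boundary contribution, equivalently because $Q$ differs from the ``true'' generator of $\wt H$ by a rank-one piece supported on the constant vector $u$; one verifies this by applying both sides to $u$ and to a complementary subspace.

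The main obstacle I anticipate is \ref{itm:2}, the identity $\wt H^2 = L$: it is the one place where a genuinely two-sided object (the integral $\int_{2r^2}^\infty \psi_j\psi_k$ over a half-line) must be shown equal to a finite sum of pointwise values at the single point $2r^2$, and getting the Christoffel--Darboux boundary terms to line up with the telescoped double sum — including all the $(-1)^N$ sign factors and the convention $\psi_n\equiv 0$ for $n<0$ at the edges of the index range — will require care. A close second is keeping the normalization constants consistent between the Hermite side (functions $\varphi_n$ with $\|\varphi_n\|_2=1$) and the Laguerre side (functions $\psi_n$ with $\|\psi_n\|_2=1$) throughout \ref{itm:1}; a single misplaced $\sqrt{2}$ or factorial would propagate. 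Everything else is a disciplined application of orthogonal-polynomial recurrences, and once \ref{itm:1}--\ref{itm:5} are in hand, the identity \eqref{eq:finite-dbmloe} — hence Proposition \ref{prop:identity} — should follow by a short Fredholm/matrix-determinant manipulation (differentiate $\det(I-\wt H)^2$ in $r$, use \ref{itm:2}--\ref{itm:5} to show it matches $\tfrac{\p}{\p r}\det(I - L - R_1\otimes R_2)$, and check equality at $r=0$ or $r=\infty$ where both sides are explicit).
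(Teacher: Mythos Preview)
Your overall strategy is sound and tracks the paper's proof in spirit, but two points deserve correction.

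For \ref{itm:2}, the paper does \emph{not} use Christoffel--Darboux or telescoping sums. Instead it inserts the contour integral representation
\[
\psi_n(x)=e^{-x/2}\,\tfrac{1}{2\pi\I}\oint dt\,\frac{e^{-xt/(1-t)}}{t^{n+1}(1-t)}
\]
for each Laguerre function, performs the $x$-integral explicitly in $L_{jk}=\int_{2r^2}^\infty\psi_j\psi_k$, and separately computes $(\wt H^2)_{jk}$ as a double contour integral via a finite geometric series in $uv$. The difference $L_{jk}-(\wt H^2)_{jk}$ is then a contour integral whose integrand carries a factor $(uv)^N$ and has no pole at the origin (since $j,k\le N-1$), hence vanishes. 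Your Christoffel--Darboux idea is aimed at the wrong object: that identity controls $\sum_n\psi_n(x)\psi_n(y)$, not $\int_{2r^2}^\infty\psi_j\psi_k$ for fixed $j,k$, and there is no clean boundary-term formula for the latter without essentially reproducing the contour computation.

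For \ref{itm:5}, your diagnosis of where $E$ comes from is incorrect. The differentiation formula $\tfrac{\p}{\p r}(I+\wt H)^{-1}=-(I+\wt H)^{-1}(\tfrac{\p}{\p r}\wt H)(I+\wt H)^{-1}$ is exact; nothing is ``missed''. After substituting \ref{itm:4} one has $-(I+\wt H)^{-1}Q\wt H(I+\wt H)^{-1}$, and the whole point is to move $Q$ past $\wt H$. The key identity, proved directly in the paper from the entrywise formula for $\wt H$, is the anticommutator relation
\[
Q\wt H+\wt HQ=-E,\qquad E=4r\,\wt Hu\otimes u.
\]
With this in hand, one writes $-(I+\wt H)^{-1}Q\wt H(I+\wt H)^{-1}=(I-\wt H^2)^{-1}(I-\wt H)(\wt HQ+E)(I+\wt H)^{-1}$ and then checks the elementary matrix identity $\wt HQ(I+\wt H)+E=(I-\wt H)(\wt HQ+E)$, which is immediate from the anticommutator. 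Your plan to ``verify by applying both sides to $u$ and a complementary subspace'' would not reveal this structure; the anticommutator is what you should be proving.

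Parts \ref{itm:1}, \ref{itm:3}, \ref{itm:4} in your sketch are essentially the paper's approach, with the caveat that \ref{itm:1} requires an explicit (upper-triangular) $S$ and two nontrivial binomial-sum lemmas --- your ``$H_{2n}(x)\sim L_n(x^2)$'' heuristic is too coarse to produce the conjugation; the actual identity (Lemma~\ref{lem:intherm-sumlague}) expresses the Hermite convolution $\int\varphi_n(x)\varphi_m(2r-x)\,dx$ as an alternating binomial sum of $\psi_k(2r^2)$, and matching this against $\wt H$ is a genuine combinatorial computation.
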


This lemma contains all the key identities which will be needed in the proof of \eqref{eq:finite-dbmloe}.
Let us thus postpone the proof of the lemma until the end of this section and proceed directly to the proof of the main result of this section.

\begin{proof}[Proof of Proposition \ref{prop:identity}]
As we already explained, all we need to do is prove \eqref{eq:finite-dbmloe}.
The structure of the proof is inspired in that of the proof of \eqref{eq:GOE} in \cite{ferrariSpohn}.
Note that both sides of \eqref{eq:finite-dbmloe} are zero if $r=0$ (this is equivalent to the fact that both sides of \eqref{eq:LOELUE} vanish when $r=0$, which is clear).
Therefore we will assume throughout this proof that $r>0$, which for similar reasons implies that both $\det(I-H)$ and $\det(I-L-R_1\otimes R_2)$ are strictly positive.

We start by using \ref{itm:2} and \ref{itm:3} in Lemma \ref{lem:propmat} to rewrite the determinant on the left-hand side of \eqref{eq:finite-dbmloe} as
\begin{align*}
\det(I-L-R_1\otimes R_2)&=\det\bigl(I-\wt H^2-\wt Huv^{\sf T}(I-\wt H)\bigr)\\
&=\det(I+\wt H)\det\bigl(I-(I+\wt H)^{-1}\wt H uv^{\sf T}\bigr)\det(I-\wt H)\\
&=\det(I-\wt H)\det(I+\wt H)\bigl(1-\langle u,(I+\wt H)^{-1}\wt Hv\rangle\bigr),
\end{align*}
where in the third equality we used the fact that $(I+\wt H)^{-1}\wt H uv^{\sf T}$ is rank one.
By Lemma \ref{lem:propmat}\ref{itm:1}, we have $\det(I-H)=\det(I-\wt H)$, and thus \eqref{eq:finite-dbmloe} will follow if we prove that
\begin{equation}\label{eq:finite-detH}
\det(I-\wt H)=\det(I+\wt H)\bigl(1-\langle u,(I+\wt H)^{-1}\wt Hv\rangle\bigr).
\end{equation}
Note that, by the discussion in the last paragraph, since $r>0$, the left-hand side and the two factors on the right-hand side are strictly positive.

Consider the second factor on the right-hand side of \eqref{eq:finite-detH}.
Since $\langle u,v\rangle=0$ if $N$ is even and $\langle u,v\rangle=2$ if $N$ is odd, we can write $1=\langle u,v\rangle+(-1)^{N}$, so that
\begin{equation}
1-\langle u,(I+\wt H)^{-1}\wt Hv\rangle=(-1)^N+\langle u,v\rangle-\langle u,(I+\wt H)^{-1}\wt Hv\rangle=(-1)^N+\langle u,(I+\wt H)^{-1}v\rangle.
\end{equation}
Taking now logarithm on both sides we see that \eqref{eq:finite-detH} is equivalent to 
\begin{equation}\label{eq:logDet}
\log\det(I-\wt H)=\log\det(I+\wt H)+\log\bigl((-1)^N+\langle u,(I+\wt H)^{-1}v\rangle\bigr).
\end{equation}
We will prove that the derivatives in $r$ of both sides are equal, that is,
\begin{equation}\label{eq:tracedetH}
-{\rm Tr}\Bigl((I-\wt H)^{-1}\tfrac{\p}{\p r}\wt H\Bigr)={\rm Tr}\Bigl((I+\wt H)^{-1}\tfrac{\p}{\p r}\wt H\Bigr)+\frac{\langle u,\tfrac{\p}{\p r}(I+\wt H)^{-1}v\rangle}{(-1)^N+\langle u,(I+\wt H)^{-1}v\rangle},
\end{equation}
where we used the fact that $\frac{\p}{\p r}\log(\det(A))={\rm Tr}\bigl(A^{-1}\frac{\p}{\p r}A\bigr)$ if $A$ is a square matrix depending smoothly on $r$.
As a consequence, the two sides of \eqref{eq:logDet} differ at most by a constant.
But, since $\wt H \longrightarrow 0$ as $r\to\infty$, both sides of \eqref{eq:logDet} go to 0 as $r \rightarrow \infty$, so the two sides are equal.
Therefore our proof will be ready once we show that \eqref{eq:tracedetH} holds.

Since $(I-\wt H)^{-1}+(I+\wt H)^{-1}=2(I-\wt H^2)^{-1}$, \eqref{eq:tracedetH} is equivalent to
\begin{equation}\label{eq:tracedetH2}
-2\ts {\rm Tr}\Bigl((I-\wt H^2)^{-1}\tfrac{\p}{\p r}\wt H\Bigr)\!\left[(-1)^N+\langle u,(I+\wt H)^{-1}v\rangle\right]=\langle u,\tfrac{\p}{\p r}(I+\wt H)^{-1}v\rangle.
\end{equation}
At this stage we use Lemma \ref{lem:propmat}\ref{itm:4} and then the cyclicity of the trace to obtain
\[\nonumber-2\ts {\rm Tr}\Bigl((I-\wt H^2)^{-1}\tfrac{\p}{\p r}\wt H\Bigr)=
-2\ts {\rm Tr}\Bigl((I-\wt H^2)^{-1}Q\wt H\Bigr)=-2\ts {\rm Tr}\Bigl(Q\wt H(I-\wt H^2)^{-1}\Bigr).\]
Now note that if $A$ is an $N\times N$ real symmetric matrix then ${\rm Tr}(QA)=-2r\sum_{i,j=0}^{N-1}A_{ij}=-2r\langle u,Au\rangle$, and thus
\begin{equation}\label{eq:twoTrace}
  4r\langle u, (I-\wt H^2)^{-1}\wt Hu\rangle
  =-2\ts {\rm Tr}\Bigl((I-\wt H^2)^{-1}\tfrac{\p}{\p r}\wt H\Bigr).
\end{equation}
On the other hand, on the right-hand side of \eqref{eq:tracedetH2} we may apply Lemma \ref{lem:propmat}\ref{itm:5} and use the simple identity $Qv=(-1)^N4ru$ to get
\begin{equation}\label{eq:diffdetH}
\begin{aligned}
\langle u,\tfrac{\p}{\p r}(I+\wt H)^{-1}v\rangle&=\langle u,(I-\wt H^2)^{-1}\wt HQv+4r(I-\wt H^2)^{-1}(\wt Hu\otimes u)(I+\wt H)^{-1}v\rangle\\
&=4r\langle u,(I-\wt H^2)^{-1}\wt Hu\rangle\!\left[(-1)^N+\langle u,(I+\wt H)^{-1}v\rangle \right].
\end{aligned}
\end{equation}
Using \eqref{eq:twoTrace} in \eqref{eq:diffdetH} we get \eqref{eq:tracedetH2}, which finishes the proof.
\end{proof}

\begin{proof}[Proof of Lemma \ref{lem:propmat}]
\mbox{}\\[4pt]
\ref{itm:1} Fix $N\in\nn$ and $r>0$, and define a upper triangular matrix $S\in\rr^{N\times N}$ as follows:
\begin{equation}\label{eq:defmatS}
S_{ij}=\mathtt c_j\tbinom{N-1-i}{j-i}(-1)^{N-1+j}\uno{j\geq i}\quad\text{with}\quad
\mathtt c_k=r^{N-1-k}\left(\tfrac{2^{N-1-k}k!}{(N-1)!}\right)^{1/2}
\end{equation}
for $i,j\in\{0,\ldots,N-1\}$.
We claim that $S$ is invertible, with inverse given by
\begin{equation}\label{eq:defmatSinv}
  S^{-1}_{ij}=\tfrac{1}{\mathtt c_i}\tbinom{N-1-i}{j-i}(-1)^{N-1+j}\uno{j\geq i}.
\end{equation}
To check this, note first that, since both $S$ and $S^{-1}$ (as given above) are upper triangular, we have $(SS^{-1})_{ij}=0$ for $i>j$, while $(SS^{-1})_{ii}=S_{ii}S^{-1}_{ii}=1$. Thus it remains to show that $(SS^{-1})_{ij}=0$ when $i<j$.
But
$(SS^{-1})_{ij}=\tfrac{(N-1-i)!}{(N-1-j)!}(-1)^{2N-2+j}\sum_{k=i}^j \tfrac{(-1)^k}{(k-i)!(j-k)!}
=\tfrac{(-1)^{i+j}(N-1-i)!}{(N-1-j)!(j-i)!}\sum_{k=0}^{j-i} (-1)^k\tfrac{(j-i)!}{k!(j-i-k)!}$,
and by the binomial theorem the last sum on the right-hand side is simply $(-1+1)^{j-i}=0$.

Now Lemma \ref{lem:intherm-sumlague} in Appendix \ref{sec:formula-hermlague} allows us to rewrite the symmetric matrix $H$ in terms of Laguerre functions: for $j\geq i$,
\begin{equation}\label{eq:idenmatHlague}
H_{ji}=H_{ij}=\frac{\mathtt c_j}{\mathtt c_i}\sum_{k=i}^j\binom{j-i}{k-i}(-1)^k\psi_k(2r^2).
\end{equation}
We will use this representation to show that $S^{-1}\wt HS=H$. We have
\begin{equation}
(S^{-1}\wt HS)_{ij}=\frac{\mathtt c_j}{\mathtt c_i}\sum_{\substack{k=i,\ldots, N-1\\\ell=0,\ldots,j}}\binom{N-1-i}{k-i}\binom{N-1-\ell}{j-\ell}(-1)^{2N-2+j+k}\wt H_{k\ell}.
\end{equation}
Note that the value of $\wt H_{k\ell}$ depends only on $k+\ell$.
Letting $\wt\psi_{n}=\psi_{n-1}(2r^2)-\psi_{n}(2r^2)$, so that $\wt H_{k\ell}=(-1)^N\wt\psi_{k+\ell-N+1}$, and recalling that, by convention, $\wt\psi_n=0$ for $n<0$, we may write
\begin{equation}\label{eq:sqBracketPre}
(S^{-1}\wt HS)_{ij}=\frac{\mathtt c_j}{\mathtt c_i}\sum_{n=0}^j\Biggl[\,\sum_{\substack{k=i,\dotsc,N-1,\;\ell=0,\dotsc,j\\k+\ell-N+1=n}}\binom{N-1-i}{k-i}\binom{N-1-\ell}{N-1-j}(-1)^{j+k+N}\Biggr]\wt\psi_{n}.
\end{equation}
Performing the change of variables $k\mapsto k+i$, $\ell\mapsto N-1-\ell$, and introducing the convention that $\binom{n}{m}=0$ if $m>n\geq0$, the sum in the square brackets turns into
\[\sum_{\substack{k\geq0,\;0\leq\ell\leq N-1\\k-\ell=n-i}}\binom{N-1-i}{k}\binom{\ell}{N-1-j}(-1)^{i+j+k+N}.\]
We claim now that the sum in $0\leq\ell\leq N-1$ can be extended to $\ell\geq0$. 
In fact, we may assume that $k\leq N-1-i$, since otherwise the first binomial coefficient vanishes. Since $\ell$ is constrained to be $\ell=k+i-n\leq N-1-n\leq N-1$, adding the terms with $\ell\geq N$ does not really contribute to the sum.
In view of this, and using Lemma \ref{lem:convbinom} from Appendix \ref{sec:formula-hermlague}, our sum can be rewritten as
\begin{equation}\label{eq:sqBracket}
\sum_{\substack{k\geq 0,\;\ell\geq 0\\\ell-k=i-n}}\binom{N-1-i}{k}\binom{\ell}{N-1-j}(-1)^{i+j+k+N}=
\begin{cases}
(-1)^{j+1}\binom{i-n}{i-j}\uno{i\geq j\geq n} &\mbox{for }i\geq n,\\
(-1)^{n+1}\binom{j-i-1}{n-i-1}\uno{j\geq n} &\mbox{for }i<n.
\end{cases}
\end{equation}
Now we substitute this formula into \eqref{eq:sqBracketPre} and consider three separate cases:

\begin{itemize}[leftmargin=0.8cm,itemsep=8pt]
  \item If $i=j$, then
  $(S^{-1}\wt HS)_{ii}=\sum_{n=0}^i\binom{i-n}{0}(-1)^{i+1}\wt\psi_{n}=(-1)^i\psi_{i}(2r^2)$.
  \item If $i<j$,  then
  $(S^{-1}\wt HS)_{ij}=\frac{\mathtt c_j}{\mathtt c_i}\sum_{n=i+1}^j(-1)^{n+1}\binom{j-i-1}{n-i-1}\wt\psi_n
    =\frac{\mathtt c_j}{\mathtt c_i}\sum_{n=i}^{j}\binom{j-i}{n-i}(-1)^n\psi_n(2r^2)$,
  where the second identity follows by summation by parts.
  \item If $i>j$, then proceeding as for $i<j$ we get
  $(S^{-1}\wt HS)_{ij}=
  % \frac{\mathtt c_j}{\mathtt c_i}(-1)^{j+1}\sum_{n=0}^{j}\binom{i-n}{i-j}\wt\psi_n
  \frac{\mathtt c_j}{\mathtt c_i}(-1)^j\sum_{n=0}^{j}\binom{i-n-1}{i-j-1}\psi_n(2r^2)$. 
  Applying Lemma \ref{lem:sumlague} from Appendix \ref{sec:formula-hermlague} we deduce that the last sum equals
  \begin{equation}
  \tfrac{\mathtt c_j}{\mathtt c_i}\tfrac{i!(2r^2)^j}{j!(2r^2)^i}\sum_{j\leq n\leq i}\tbinom{i-j}{n-j}(-1)^n\psi_n(2r^2)
  =\tfrac{\mathtt c_i}{\mathtt c_j}\sum_{j\leq n\leq i}\tbinom{i-j}{n-j}(-1)^n\psi_n(2r^2).
  \end{equation}
\end{itemize}
In each case, the expression for $(S^{-1}\wt HS)_{ij}$ coincides with the formula for $H_{ij}$ in \eqref{eq:idenmatHlague}, which completes the proof of \ref{itm:1}.\\[4pt]
\ref{itm:2} We will use the contour integral representation of the Laguerre function $\psi_n(x)$,
\begin{equation}\label{eq:integlague}
  \psi_n(x)=e^{-x/2}\frac{1}{2\pi\I}\oint dt\,\frac{e^{-\frac{xt}{1-t}}}{t^{n+1}(1-t)},
\end{equation}
where the integration is along a small circle around the origin (note that by Cauchy's theorem this formula is consistent with our convention $\psi_n\equiv0$ for $n<0$).
Together with the definition of $L$, \eqref{eq:integlague} leads to
\begin{align}\label{eq:ointL}
  L_{jk}=\int_{2r^2}^\infty dx\,\psi_j(x)\psi_k(x)
  &=\frac{1}{(2\pi\I)^2}\int_{2r^2}^\infty dx\oint\mspace{-9.0mu}\oint du\,dv\,\frac{e^{-x-\frac{xu}{1-u}-\frac{xv}{1-v}}}{u^{j+1}(1-u)v^{k+1}(1-v)}\\
  &=\frac{1}{(2\pi\I)^2}\oint\mspace{-9.0mu}\oint du\,dv\,\frac{e^{-2r^2\left(1+\frac{u}{1-u}+\frac{v}{1-v}\right)}}{u^{j+1}v^{k+1}(1-uv)}.
\end{align}
On the other hand, from the definition of $\wt H$ we get
\begin{align}\label{eq:ointH}
\nonumber&(\wt H^2)_{jk}=(-1)^{2N}\sum_{n=0}^{N-1}\left(\psi_{j+n-N}(2r^2)-\psi_{j+n-N+1}(2r^2)\right)\left(\psi_{n+k-N}(2r^2)-\psi_{n+k-N+1}(2r^2)\right)\\
\nonumber&=\frac{1}{(2\pi\I)^2}\oint\mspace{-9.0mu}\oint du\,dv\sum_{n=0}^{N-1}\frac{e^{-2r^2-\frac{2r^2u}{1-u}-\frac{2r^2v}{1-v}}}{(1-u)(1-v)}\Big(\tfrac{1}{u^{j+n-N+1}}-\tfrac{1}{u^{j+n-N+2}}\Big)\Big(\tfrac{1}{v^{n+k-N+1}}-\tfrac{1}{v^{n+k-N+2}}\Big)\\
&=\frac{1}{(2\pi\I)^2}\oint\mspace{-9.0mu}\oint du\,dv\,\frac{e^{-2r^2\left(1+\frac{u}{1-u}+\frac{v}{1-v}\right)}(1-(uv)^N)}{u^{j+1}v^{k+1}(1-uv)}.
\end{align}
The difference between \eqref{eq:ointL} and \eqref{eq:ointH} is then given by
\begin{equation}
L_{jk}-(\wt H^2)_{jk}=\frac{1}{(2\pi\I)^2}\oint\mspace{-9.0mu}\oint du\,dv\,\frac{e^{-2r^2\left(1+\frac{u}{1-u}+\frac{v}{1-v}\right)}(uv)^N}{u^{j+1}v^{k+1}(1-uv)}.
\end{equation}
Since $0\leq j,k\leq N-1$, the integrand has no poles in $u$ and $v$ inside the chosen contours, and hence the whole integral vanishes.\\[4pt]
\ref{itm:3} For the first formula, we compute directly $\wt Hu$ to get
\begin{equation}
(\wt Hu)_i=(-1)^{2N-1}\sum_{k=0}^{N-1}\left[\psi_{i+k-N}(2r^2)-\psi_{i+k-N+1}(2r^2)\right]
=\psi_i(2r^2)-\psi_{i-N}(2r^2)=(R_1)_i,
\end{equation}
where the last identity follows because $\psi_{i-N}(2r^2)=0$ (since $i<N$).
For the second one, we use the property $\frac{\p}{\p x}\!\left(L_n(x)-L_{n+1}(x)\right)=L_n(x)$ of Laguerre polynomials to obtain
$\frac{\p}{\p x}\left(\psi_n(x)-\psi_{n+1}(x)\right)=\frac{1}{2}\left(\psi_n(x)+\psi_{n+1}(x)\right)$, which, together with the fact that $L_n(0)=1$ for all $n\in\nn$, gives
\begin{equation}
\frac{1}{2}\int_{0}^{2r^2}\!dx\left[\psi_n(x)+\psi_{n+1}(x)\right]=\psi_n(2r^2)-\psi_{n+1}(2r^2)
\end{equation}
for all $n\in\nn$. Hence we can write the entries of $\wt H$ as
\begin{equation}\label{eq:wtH2}
\wt H_{ij}=\begin{cases}
0 &\mbox{for } i+j<N-1,\\
(-1)^{N+1}e^{-r^2} &\mbox{for } i+j=N-1,\\
\tfrac{(-1)^N}{2}\left(\Psi_{i+j-N}(2r^2)+\Psi_{i+j-N+1}(2r^2)\right) &\mbox{for } i+j>N-1,
\end{cases}
\end{equation}
with $\Psi_n(s)=\int_0^{s}\!dx\,\psi_n(x)$
(note that $\psi_0(x)=e^{-x/2}$). Now we can compute
\begin{align}
((I-\wt H)v)_i&=\sum_{k=0}^{N-1}(\delta_{ik}-\wt H_{ik})(-1)^k2
=2(-1)^i-2(-1)^{2N-i}e^{-r^2}-2\sum_{k=N-i}^{N-1}(-1)^k\wt H_{ik}\\
&=2(-1)^i(1-e^{-r^2})-2\sum_{k=0}^{i-1}(-1)^{k+N-i}\frac{(-1)^N}2(\Psi_k(2r^2)+\Psi_{k+1}(2r^2))\\
&=2(-1)^i(1-e^{-r^2})-(-1)^{i}\Psi_0(2r^2)+\Psi_i(2r^2)=(R_2)_i.
\end{align}
\\[4pt]
\ref{itm:4} From \eqref{eq:wtH2} we get
\begin{equation}
\tfrac{\p}{\p r}\wt H_{ij}=
\begin{cases}
0 &\mbox{for } i+j<N-1,\\
(-1)^{N}2r\ts e^{-r^2} &\mbox{for } i+j=N-1,\\
(-1)^N2r\!\left(\psi_{i+j-N}(2r^2)+\psi_{i+j-N+1}(2r^2)\right) &\mbox{for } i+j>N-1.
\end{cases}
\end{equation}
This expression coincides with the $i,j$ entry, for all $i,j\in\{0,\dotsc,N-1\}$, of the matrix $Q\wt H$, which is given by
\begin{equation}
(Q\wt H)_{ij}=-4r\sum_{k=0}^{i-1}\wt H_{kj}-2r\wt H_{ij}=(-1)^N 2r\!\left(\psi_{i+j-N}(2r^2)+\psi_{i+j-N+1}(2r^2)\right).
\end{equation}
\\[4pt]
\ref{itm:5} For $i,j\in\{0,\ldots,N-1\}$ we have
\begin{equation}
(Q\wt H)_{ij}+(\wt HQ)_{ij}=-4r\Bigl(\wt H_{ij}+\sum_{k=0}^{i-1}\wt H_{kj}+\sum_{k=j+1}^{N-1}\wt H_{ik}\Bigr).
\end{equation}
Since $\sum_{k=0}^{i-1}\wt H_{kj}=\sum_{k=0}^{j-1}\wt H_{ik}$, the right-hand side of the last identity equals $-4r\sum_{k=0}^{N-1}\wt H_{ik}$, which coincides with $-(4r\wt Hu\otimes u)_{ij}$.
Thus, recalling the notation $E=4r\wt Hu\otimes u$, we have
\begin{equation}\label{eq:hkpmatrices}
Q\wt H=-\wt HQ-E.
\end{equation}
Now recall that if $A$ is a square matrix which depends smoothly on a parameter $r$, then $\p_rA^{-1}=-A^{-1}\p_rAA^{-1}$.
Then, in view of \ref{itm:4} and the last identity, we have
\begin{equation}
  \p_r(I+\wt H)^{-1}=-(I+\wt H)^{-1}Q\wt H(I+\wt H)^{-1}%=-(I-\wt H^2)^{-1}(I-\wt H)Q\wt H(I+\wt H)^{-1}\\&
  =(I-\wt H^2)^{-1}(I-\wt H)(\wt HQ+E)(I+\wt H)^{-1}.
\end{equation}
Comparing this with the right-hand side of the identity we seek to prove, we see that it is enough to check that $\wt HQ(I+\wt H)+E=(I-\wt H)(\wt HQ+E)$,
which follows easily from \eqref{eq:hkpmatrices}.
\end{proof}

% subsection proof_of_the_identity (end)

\appendix

\section{Some formulas for Hermite and Laguerre polynomials}\label{sec:formula-hermlague}

We begin with a combinatorial result which was used in the proof of Lemma \ref{lem:propmat}\ref{itm:1} and which will also be used later in this appendix. 

Throughout this appendix we adopt the convention that, for $k\in\nn$ and $\ell\in\zz$, $\binom{k}{\ell}=0$ if $\ell<0$ or $\ell>k$ (this can be justified, for instance, by replacing the factorials with Gamma functions).

\begin{lem}\label{lem:convbinom}
Let $n,m\in\nn$ and $a\in\zz$. Then
\begin{equation}
\sum_{\substack{i,j\geq 0\\j-i=a}}\binom{n}{i}\binom{j}{m}(-1)^i=
\begin{dcases*}
(-1)^n\binom{a}{m-n} & for $a\geq 0$,\\
(-1)^{m-a}\binom{n-m-1}{n-m+a}\uno{\{n\geq m-a\}} & for $a<0$.
\end{dcases*}
\end{equation}
\end{lem}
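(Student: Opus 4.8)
The plan is to prove this identity by a generating-function argument. First I would recognize that the sum on the left is the coefficient-extraction from a product of two well-understood generating functions. Recall the elementary identities $\sum_{i\geq0}\binom{n}{i}(-1)^i x^i=(1-x)^n$ (a polynomial, since $\binom{n}{i}=0$ for $i>n$) and $\sum_{j\geq0}\binom{j}{m}y^j=y^m/(1-y)^{m+1}$ (valid as a formal power series in $y$). The constraint $j-i=a$ means that, writing everything in a single variable, we are extracting a coefficient: if we set $x=z^{-1}$ and $y=z$ formally, then $\sum_{i,j\geq0,\,j-i=a}\binom{n}{i}\binom{j}{m}(-1)^i$ is the coefficient of $z^{a}$ in $(1-z^{-1})^n\cdot z^m/(1-z)^{m+1}$. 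The cleanest way to make this rigorous is via a contour integral: the sum equals
\begin{equation}\label{eq:convbinom-contour}
\frac{1}{2\pi\I}\oint dz\,\frac{(1-z^{-1})^n z^m}{(1-z)^{m+1}z^{a+1}}
=\frac{1}{2\pi\I}\oint dz\,\frac{(z-1)^n z^{m-n}}{(1-z)^{m+1}z^{a+1}}
=\frac{(-1)^{m+1}}{2\pi\I}\oint dz\,\frac{(z-1)^{n-m-1}}{z^{a-m+n+1}},
\end{equation}
where the contour is a small circle around the origin (small enough to exclude $z=1$), and interchanging sum and integral is justified because for each fixed power of $z$ only finitely many $(i,j)$ contribute (the $i$-sum is finite).

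Next I would evaluate the contour integral in \eqref{eq:convbinom-contour} by residues, splitting into the two cases of the statement. The integral picks out the coefficient of $z^{a-m+n}$ in the Laurent expansion of $(-1)^{m+1}(z-1)^{n-m-1}$ around $z=0$. If $a\geq0$: here one should instead expand in a form that makes the binomial theorem directly applicable; writing $(z-1)^n z^{m-n}/(1-z)^{m+1}$ and noting $a\geq0$ means we want a nonnegative power of $z$, I would expand $(1-z)^{-(m+1)}=\sum_{k\geq0}\binom{m+k}{k}z^k$ and $(1-z^{-1})^n=\sum_{i}\binom{n}{i}(-1)^i z^{-i}$, multiply by $z^m$, and collect the coefficient of $z^a$; a short Vandermonde-type simplification yields $(-1)^n\binom{a}{m-n}$ (consistent with the convention that this vanishes unless $0\leq m-n\leq a$). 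If $a<0$: then $n-m-1$ may be negative, so $(z-1)^{n-m-1}=(-1)^{n-m-1}(1-z)^{n-m-1}$ must be expanded as a power series $\sum_{k\geq0}\binom{n-m-1+k}{k}(-1)^{?}\cdots$; extracting the coefficient of $z^{a-m+n}$ (which is $\geq0$ exactly when $n\geq m-a$, matching the indicator) and simplifying the binomial coefficient gives $(-1)^{m-a}\binom{n-m-1}{n-m+a}$.

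The main obstacle I anticipate is purely bookkeeping: getting the signs and the direction of the binomial-coefficient expansions right in the two cases, since $n-m-1$ changes sign between them and the convention $\binom{k}{\ell}=0$ for $\ell<0$ or $\ell>k$ must be invoked consistently (in particular to see that the $a\geq0$ answer already encodes its own support condition, whereas the $a<0$ answer needs the explicit indicator $\uno{\{n\geq m-a\}}$ because $\binom{n-m-1}{n-m+a}$ with the Gamma-function convention would not vanish for all the relevant negative arguments). As a sanity check I would verify the boundary case $a=0$ from both formulas and confirm they agree (both give $(-1)^n\binom{0}{m-n}=\uno{\{m=n\}}=(-1)^{m}\binom{n-m-1}{n-m}\uno{\{n\geq m\}}$, using that $\binom{n-m-1}{n-m}=0$ for $n>m$), and I would spot-check a small numeric case such as $n=m=2$. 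An alternative, should the generating-function manipulation prove delicate, is a direct induction on $a$ (shifting $a\mapsto a+1$ corresponds to a Pascal-type recursion on the left-hand side), but I expect the contour-integral route to be the most transparent.
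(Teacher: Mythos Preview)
Your approach is correct and, like the paper's, is a generating-function argument, but the implementation is genuinely different. You encode the constraint $j-i=a$ by extracting the coefficient of $z^{a}$ from the product $(1-z^{-1})^{n}\cdot z^{m}/(1-z)^{m+1}$ and reduce everything to the single contour integral $\frac{(-1)^{m+1}}{2\pi\I}\oint (z-1)^{\,n-m-1}z^{-(a-m+n+1)}\,dz$. The paper instead treats the two signs of $a$ by two separate generating functions in an \emph{external} parameter: for $a\geq0$ it sums the left-hand side over $m$ and recognizes $\sum_{m}[\cdots]x^{m}=(1+x)^{a}x^{n}$, while for $a<0$ it sums over $n$ and obtains $\sum_{n}[\cdots]x^{n}=x^{\,m-a}/(1-x)^{-a}$, in each case reading off a single coefficient. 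Your route is more unified (one integral for both cases); the paper's is slightly more elementary (pure power-series manipulation, no contours). One small remark on your plan: for $a\geq0$ you revert to the unsimplified product and appeal to a Vandermonde-type identity, but this detour is unnecessary---your simplified integral already yields the answer in both regimes once you expand $(z-1)^{\,n-m-1}$ around $z=0$, as a polynomial when $n>m$ and as $(-1)^{\,n-m-1}(1-z)^{\,n-m-1}=\,(-1)^{\,n-m-1}\sum_{k\geq0}\binom{m-n+k}{k}z^{k}$ when $n\leq m$; note that the natural dichotomy here is the sign of $n-m-1$, which then automatically matches the support conditions on $a$ in the two formulas.
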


\begin{proof}
Assume first that $a\geq 0$. Then the formula we seek to prove can be rewritten as
\begin{equation}\label{eq:binombinom}
  \sum_{i\geq 0}\binom{n}{i}\binom{i+a}{m}(-1)^{n-i}=\binom{a}{m-n}.
\end{equation}
For $x\in\rr$ and with our convention, we have (using Newton's generalized binomial theorem)
\begin{multline}
\sum_{m\geq 0}\sum_{i\geq 0}\binom{n}{i}\binom{i+a}{m}(-1)^{n-i}x^m=\sum_{i\geq 0}\binom{n}{i}(-1)^{n-i}(1+x)^{i+a}\\
=(1+x)^a\left(-1+(1+x)\right)^{n}=(1+x)^a x^{n}=x^{n}\sum_{\ell\in\zz}\binom{a}{\ell}x^\ell.
\end{multline}
By equating the coefficient in front of $x^m$ on both sides, we obtain \eqref{eq:binombinom} .

When $a<0$, we first let $b=-a>0$ and rewrite the desired identity as
\begin{equation}\label{eq:caseneg}
\sum_{j\geq 0}\binom{n}{j+b}\binom{j}{m}(-1)^{j-m}=\binom{n-m-1}{n-m-b}\uno{\{n\geq m+b\}}.
\end{equation}
Pick $x\in\rr$ such that $|x|<1$ and $\big|\frac{x}{1-x}\big|<1$. Using three times the identity 
\begin{equation}\label{eq:binomialThing}
  \frac{x^k}{(1-x)^{k+1}}=\sum_{n\geq k}^\infty \binom{n}{k}x^n
\end{equation}
(which is a straightforward consequence of Newton's generalized binomial theorem) together with our convention we have
\begin{multline}
\sum_{n\geq 0}\sum_{j\geq 0}\binom{n}{j+b}\binom{j}{m}(-1)^{j-m}x^n=\sum_{j\geq 0}\binom{j}{m}\frac{x^{j+b}}{(1-x)^{j+b+1}}(-1)^{j-m}\\
=\frac{x^b}{(1-x)^{b+1}}\frac{\left(x/(1-x)\right)^m}{\left(1+x/(1-x)\right)^{m+1}}=\frac{x^{b+m}}{(1-x)^b}
=x^{m+1}\sum_{\ell\geq b-1}\binom{\ell}{b-1}x^\ell.
\end{multline}
\eqref{eq:caseneg} now follows from the fact that the coefficient of $x^n$ on the right-hand side is given by $\binom{n-m-1}{n-m-b}$ when $n\geq m+b$ and equals $0$ when $n<m+b$.
\end{proof}

\begin{lem}\label{lem:intherm-sumlague}
For $n,m\in\nn$ with $n\geq m$ and any $r\in\rr\setminus\{0\}$, the following relation holds:
\begin{equation}\label{eq:intherm-sumlague}
\int_\rr dx\,\varphi_n(x)\varphi_m(2r-x)=r^{m-n}\left(\frac{2^m n!}{2^n m!}\right)^{1/2}\sum_{k=m}^{n}\binom{n-m}{k-m}(-1)^k\psi_k(2r^2).
\end{equation}
Similarly, for the case $r=0$ we have
\begin{equation}\label{eq:intherm-sumlague-0}
\int_\rr dx\,\varphi_n(x)\varphi_m(-x)=(-1)^n\uno{m=n}.
\end{equation}
\end{lem}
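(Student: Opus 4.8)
The plan is to reduce the $r\neq0$ identity to a single combinatorial statement about power-series coefficients, using the contour representations already in the paper. First I would substitute the contour integral formula \eqref{eq:integhermite} for $\varphi_n(x)$ and for $\varphi_m(2r-x)$, turning $\int_\rr dx\,\varphi_n(x)\varphi_m(2r-x)$ into a double contour integral (variables $s,t$ on small circles around $0$) with a Gaussian $x$-integral inside. Interchanging integrals (justified by absolute convergence on the compact contours) and computing the elementary Gaussian $\int_\rr e^{-x^2+(2r+2s-2t)x-2r^2+4rt}\,dx$ by completing the square, the cross terms telescope and one is left with
\[\int_\rr dx\,\varphi_n(x)\varphi_m(2r-x)=\Big(\tfrac{n!\,m!}{2^{n+m}}\Big)^{1/2}e^{-r^2}\,\frac{1}{(2\pi\I)^2}\oint\!\!\oint\frac{ds\,dt}{s^{n+1}t^{m+1}}\,e^{-2st+2r(s+t)}.\]
The $t$-integral is now a pure residue, $\frac{1}{2\pi\I}\oint dt\,t^{-m-1}e^{2t(r-s)}=2^m(r-s)^m/m!$, which collapses the double integral to $\big(\tfrac{2^m n!}{2^n m!}\big)^{1/2}e^{-r^2}\,\frac{1}{2\pi\I}\oint ds\,s^{-n-1}e^{2rs}(r-s)^m$. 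Rescaling $s\mapsto rs$ (allowed since $r\neq0$; it merely rescales the circle, orientation preserved) pulls out the factor $r^{m-n}$, and after cancelling the common $e^{-r^2}$ via $\psi_k(2r^2)=e^{-r^2}L_k(2r^2)$ the lemma is reduced to
\[\frac{1}{2\pi\I}\oint\frac{du}{u^{n+1}}\,e^{wu}(1-u)^m=\sum_{k=m}^{n}\binom{n-m}{k-m}(-1)^k L_k(w),\qquad w=2r^2;\]
that is, the coefficient of $u^n$ in $e^{wu}(1-u)^m$ equals the displayed Laguerre sum.

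Next I would prove this combinatorial identity by also writing its right-hand side as a contour integral. Inserting the generating-function representation \eqref{eq:integlague} for $\psi_k$ (equivalently for $L_k$) and summing the inner series, $\sum_{k=m}^{n}\binom{n-m}{k-m}(-1)^k t^{-k-1}=(-1)^m t^{-m-1}(1-1/t)^{n-m}=(-1)^n t^{-n-1}(1-t)^{n-m}$, the right-hand side becomes $\frac{(-1)^n}{2\pi\I}\oint dt\,t^{-n-1}(1-t)^{n-m-1}e^{-wt/(1-t)}$. The Möbius substitution $u=t/(t-1)$ (an involution fixing $0$ and exchanging $1$ and $\infty$) satisfies $1-u=(1-t)^{-1}$, $wu=-wt/(1-t)$, and a short computation gives $\frac{du}{u^{n+1}}=(-1)^n\frac{(1-t)^{n-1}}{t^{n+1}}\,dt$; multiplying these out, $\frac{du}{u^{n+1}}e^{wu}(1-u)^m$ is carried exactly onto $(-1)^n\frac{dt}{t^{n+1}}(1-t)^{n-m-1}e^{-wt/(1-t)}$, so the two contour integrals coincide. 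The point to check is that a small positively-oriented circle around $u=0$ pulls back to a loop that winds once, positively, around $t=0$ and avoids $t=1$ (the only other singularity of the $t$-integrand); this is clear because $u\approx-t$ near the origin.

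For $r=0$ the argument is different and immediate: $\varphi_m$ has parity $(-1)^m$ (the Hermite polynomial is even or odd according to the parity of $m$, and $e^{-x^2/2}$ is even), so $\int_\rr\varphi_n(x)\varphi_m(-x)\,dx=(-1)^m\int_\rr\varphi_n\varphi_m=(-1)^m\delta_{nm}=(-1)^n\uno{m=n}$ by orthonormality of the $\varphi_j$. (Alternatively, one can note that the right-hand side of \eqref{eq:intherm-sumlague} vanishes to order $w^{n-m}$ as $w=2r^2\to0$, hence extends continuously to the above value at $r=0$.)

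I expect the routine part to be the bookkeeping in the reduction step — tracking the powers of $2$, the $\pi^{1/2}$ and the signs through the Gaussian integration, the residue evaluation and the rescaling. The genuinely delicate step is the change of variables $u=t/(t-1)$ in the combinatorial identity: finding this particular Möbius map and verifying that the two contours, orientations included, match up so that no spurious contribution from $t=1$ is introduced. Once the substitution is identified the identity falls out mechanically.
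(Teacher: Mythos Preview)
Your argument is correct, and it is genuinely different from the paper's. The paper proceeds by first computing the left-hand side explicitly: inserting the bilinear generating function for Hermite polynomials into a Gaussian integral yields the closed form
\[
\int_\rr dx\,\varphi_n(x)\varphi_m(2r-x)=\frac{e^{-r^2}}{\sqrt{2^{n+m}n!m!}}\sum_{\ell=0}^m (-2)^\ell\ell!\binom{n}{\ell}\binom{m}{\ell}(2r)^{n+m-2\ell},
\]
and then, expanding the Laguerre polynomials on the right-hand side as power series in $2r^2$, the two finite sums are matched coefficient by coefficient using the Vandermonde-type identity of Lemma~\ref{lem:convbinom}. Your route instead stays entirely inside contour integrals: after the Gaussian step and one residue you reduce everything to showing that $[u^n]\,e^{wu}(1-u)^m$ equals the Laguerre sum, and the M\"obius involution $u=t/(t-1)$ carries the Hermite-side contour integral onto the Laguerre generating-function contour integral \eqref{eq:integlague} directly.

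The trade-off is this: the paper's method produces, as a by-product, the explicit polynomial expression \eqref{eq:intherm} for $H_{ij}$, but it outsources the combinatorics to a separate lemma. Your method is self-contained and more structural---it explains the identity as a change of variables rather than a coefficient coincidence---at the cost of not isolating a closed-form sum. Your verification that the small circle in $u$ pulls back to a loop around $t=0$ avoiding $t=1$ (so no spurious residue enters) is the only point that needs care, and your justification via $u\approx -t$ near the origin, together with conformality, is adequate. The $r=0$ case via parity is also cleaner than extracting it from the explicit formula.
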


\begin{proof}
Recall that the Hermite polynomials have a simple generating function, namely
\begin{equation}
\sum_{n=0}^\infty\frac{H_n(x)}{n!}t^n=e^{2xt-t^2}.
\end{equation}
We write the convolution of Hermite functions in \eqref{eq:intherm-sumlague} as
\begin{equation}
\int_\rr dx\,\varphi_n(x)\varphi_m(2r-x)=\frac{1}{\sqrt{2^{n+m}\pi n!m!}}\int_\rr dx\,H_n(x)e^{-x^2/2}H_m(2r-x)e^{-(2r-x)^2/2}
\end{equation}
and then use the above generating function to evaluate the sum
\begin{multline}\label{eq:gener-conv}
\sum_{n,m=0}^\infty\frac{t_1^nt_2^m}{n!m!}
\int_\rr dx\,H_n(x)e^{-x^2/2}H_m(2r-x)e^{-(2r-x)^2/2}\\
=\int_{\rr}dx\,e^{2xt_1-t_1^2-x^2/2+2(2r-x)t_2-t_2^2-(2r-x)^2/2}
=\sqrt\pi e^{-r^2+2r(t_1+t_2)-2t_1t_2}.
\end{multline}
By equating the coefficient of $t_1^nt_2^m$ on each side, we obtain an explicit formula for the left-hand side of \eqref{eq:intherm-sumlague}:
\begin{multline}\label{eq:intherm}
\int_\rr dx\,\varphi_n(x)\varphi_m(2r-x)=\frac{1}{\sqrt{2^{n+m}n!m!}}e^{-r^2}\p_{t_1}^n\p_{t_2}^m\left.e^{2r(t_1+t_2)-2t_1t_2}\right|_{t_1=t_2=0}\\
=\frac{1}{\sqrt{2^{n+m}n!m!}}e^{-r^2}\sum_{\ell=0}^m (-2)^\ell\ell!\binom{n}{\ell}\binom{m}{\ell}(2r)^{n+m-2\ell}.
\end{multline}
In particular, we get \eqref{eq:intherm-sumlague-0}, so from now on we will assume $r\neq0$.

Turning to  the right-hand side of \eqref{eq:intherm-sumlague}, we use the explicit power series expansion of the Laguerre polynomials,
\begin{equation}\label{eq:lagExp}
  L_k(x)=\sum_{\ell=0}^k\binom{k}{\ell}\frac{(-1)^\ell}{\ell!}x^\ell,
\end{equation}
to rewrite it as
\begin{equation}
r^{m-n}\!\left(\frac{2^m n!}{2^n m!}\right)^{1/2}\sum_{k=m}^{n}\sum_{\ell=0}^k\binom{n-m}{k-m}(-1)^ke^{-r^2}\binom{k}{\ell}\frac{(-1)^\ell}{\ell!}(2r^2)^\ell.
\end{equation}
We need to show that this expression equals the right-hand side of \eqref{eq:intherm} or, equivalently, that
\begin{equation}
  \sum_{k=0}^{n-m}\sum_{\ell=0}^{k+m}\binom{n-m}{k}\binom{k+m}{\ell}(-1)^{k+m}\frac{(-2r^2)^\ell}{\ell!}
  =\sum_{\ell=n-m}^n\binom{m}{n-\ell}(-1)^n\frac{(-2r^2)^\ell}{\ell!},
\end{equation}
where we have performed the changes of variables $k\mapsto k+m$ on the left-hand side and $\ell\mapsto n-\ell$ on the right-hand side.
Using our convention, this is equivalent to
\begin{equation}
  \sum_{\ell=0}^{\infty}\left[\sum_{k=0}^{\infty}\binom{n-m}{k}\binom{k+m}{\ell}(-1)^{n-m-k}\right]\!\frac{(-2r^2)^\ell}{\ell!}
  =\sum_{\ell=0}^\infty\binom{m}{n-\ell}\frac{(-2r^2)^\ell}{\ell!}.
\end{equation}
It follows from Lemma \ref{lem:convbinom} (or, more specifically, from \eqref{eq:binombinom}) that the coefficients of $\frac{(-2r^2)^\ell}{\ell!}$ on both sides of this identity coincide, and this finshes the proof.
\end{proof}

\begin{lem}\label{lem:sumlague}
For any $n,m\in\nn$ with $n\geq m$ and any $x\in\rr$, the following relation holds:
\begin{equation}\label{eq:sumlague}
\frac{(-1)^mx^n}{n!}\sum_{k=0}^{m}\binom{n-k-1}{n-m-1}L_k(x)
=\frac{x^m}{m!}\sum_{k=m}^{n}\binom{n-m}{k-m}(-1)^kL_k(x).
\end{equation}
\end{lem}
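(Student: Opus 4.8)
The plan is to reduce \eqref{eq:sumlague} to a pair of elementary binomial identities, one of which is already available as \eqref{eq:binombinom}, by expanding the Laguerre polynomials in powers of $x$ and comparing coefficients.

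First I would put the claim in a more symmetric form. Set $p=n-m\ge 0$ and note that $\binom{n-k-1}{n-m-1}=\binom{n-k-1}{m-k}$. Since both sides of \eqref{eq:sumlague} are polynomials, dividing by $\tfrac{(-1)^{m}x^{m}}{m!}$ and re-indexing the right-hand sum by $i=k-m$ reduces the lemma to the statement that the two polynomials
\[
\frac{m!\,x^{p}}{n!}\sum_{k=0}^{m}\binom{n-k-1}{m-k}L_{k}(x)
\qquad\text{and}\qquad
\sum_{i=0}^{p}\binom{p}{i}(-1)^{i}L_{m+i}(x)
\]
coincide. (For $p=0$ this is immediate, so one may assume $p\ge 1$.)

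Next I would expand via the power series \eqref{eq:lagExp}, $L_{k}(x)=\sum_{\ell\ge0}\binom{k}{\ell}\tfrac{(-x)^{\ell}}{\ell!}$, and interchange sums. On the left the coefficient of $x^{p+\ell}$ equals $\tfrac{m!}{n!}\tfrac{(-1)^{\ell}}{\ell!}$ times $\sum_{k}\binom{n-k-1}{m-k}\binom{k}{\ell}$, while on the right the coefficient of the same power $x^{p+\ell}$ equals $\tfrac{(-1)^{p+\ell}}{(p+\ell)!}$ times $\sum_{i}\binom{p}{i}(-1)^{i}\binom{m+i}{p+\ell}$. Hence the lemma follows from the two combinatorial identities
\[
\sum_{k}\binom{n-k-1}{m-k}\binom{k}{\ell}=\binom{n}{m-\ell},
\qquad
\sum_{i}\binom{p}{i}(-1)^{i}\binom{m+i}{p+\ell}=(-1)^{p}\binom{m}{\ell},
\]
together with the bookkeeping identity $\tfrac{m!}{n!\,\ell!}\binom{n}{m-\ell}=\tfrac{1}{(p+\ell)!}\binom{m}{\ell}$ (immediate on writing out the binomials) and the sign cancellation $(-1)^{p+\ell}(-1)^{p}=(-1)^{\ell}$. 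The second identity above is exactly \eqref{eq:binombinom} from Lemma \ref{lem:convbinom}, applied with its ``$n$'' equal to $p$, its ``$a$'' equal to $m$, and its ``$m$'' equal to $p+\ell$.

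It then remains to prove the Vandermonde-type identity $\sum_{k}\binom{n-k-1}{m-k}\binom{k}{\ell}=\binom{n}{m-\ell}$, which is essentially the only point requiring any work; the main subtlety is organizational (aligning the shifted powers of $x$ and the signs correctly) rather than conceptual. I would establish it by the same generating-function device used in the proof of Lemma \ref{lem:convbinom}: after the substitution $r=m-k$ the left side is $\sum_{r}\binom{p+r-1}{r}\binom{m-r}{\ell}$; writing $\binom{p+r-1}{r}=[z^{r}](1-z)^{-p}$ (the negative binomial series, cf.\ \eqref{eq:binomialThing}) and $\binom{m-r}{\ell}=[y^{\ell}](1+y)^{m-r}$, summing over $r$ gives $[y^{\ell}](1+y)^{m}\bigl(1-(1+y)^{-1}\bigr)^{-p}=[y^{\ell}](1+y)^{m}\bigl((1+y)/y\bigr)^{p}=[y^{\ell+p}](1+y)^{m+p}=\binom{m+p}{\ell+p}=\binom{n}{m-\ell}$; alternatively, upper-negating $\binom{n-k-1}{m-k}$ turns the sum into one directly covered by Lemma \ref{lem:convbinom}. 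The remaining verification is the routine factorial bookkeeping indicated above. (An alternative route avoids coefficient comparison entirely: the left side of \eqref{eq:sumlague} equals $\tfrac{(-1)^{m}x^{n}}{n!}L_{m}^{(n-m)}(x)$ by the standard connection formula for Laguerre polynomials, and the right side equals the same expression via $\sum_{i}\binom{p}{i}(-1)^{i}L_{m+i}(x)=(-1)^{p}L_{n}^{(-p)}(x)$ and the transformation $L_{n}^{(-p)}(x)=(-1)^{p}\tfrac{m!}{n!}x^{p}L_{m}^{(p)}(x)$; this is shorter but invokes properties of the generalized Laguerre polynomials $L_{n}^{(\alpha)}$ that are not otherwise used in the paper.)
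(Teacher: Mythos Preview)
Your proposal is correct and follows essentially the same route as the paper: expand both sides via \eqref{eq:lagExp}, reduce the right-hand sum to \eqref{eq:binombinom}, and reduce the left-hand sum to the Vandermonde-type identity $\sum_{k}\binom{n-k-1}{m-k}\binom{k}{\ell}=\binom{n}{m-\ell}$, which the paper likewise proves by a generating-function argument based on \eqref{eq:binomialThing}. The only differences are cosmetic (you first divide out the common factor $(-1)^m x^m/m!$, and your generating-function computation for the Vandermonde step uses the substitution $z=(1+y)^{-1}$ rather than the paper's product of two series in a single variable $x$); both lead to the same identity and the same bookkeeping.
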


\begin{proof}
We will use \eqref{eq:lagExp} in order to extract the coefficients of $x^\ell$ in the polynomials appearing on both sides.
The coefficient of $x^\ell$ on the left-hand side of \eqref{eq:sumlague} is clearly 0 if $\ell<n$, while for $n\leq\ell\leq n+m$ it is given by
\begin{equation}\label{eq:lhscoef}
\frac{(-1)^{\ell+m-n}}{n!(\ell-n)!}\sum_{k=0}^{m}\binom{n-k-1}{n-m-1}\binom{k}{\ell-n}
=\frac{(-1)^{\ell+m-n}}{n!(\ell-n)!}\binom{n}{\ell-m},
\end{equation}
where we have used a variant of Vandermonde's identity which can be obtained by equating the coefficient of $x^{n}$ in the expansion of both sides of the identity $x\frac{x^{n-m-1}}{(1-x)^{n-m}}\frac{x^{\ell-n}}{(1-x)^{\ell-n+1}}=\frac{x^{\ell-m}}{(1-x)^{\ell-m+1}}$ obtained by using \eqref{eq:binomialThing}.
On the other hand, for $\ell<m$ the coefficient of $x^\ell$ on the right-hand side of \eqref{eq:sumlague} is clearly zero, while for $m\leq\ell\leq n+m$ it is given by
\begin{equation}\label{eq:rhscoef}
\frac{(-1)^{\ell-m}}{m!(\ell-m)!}\sum_{k=0}^{n-m}\binom{n-m}{k}\binom{k+m}{\ell-m}(-1)^{k+m}
=\frac{(-1)^{\ell+n-m}}{m!(\ell-m)!}\binom{m}{\ell-n},
\end{equation}
where we used the change of variables $k\mapsto k+m$ and Lemma \ref{lem:convbinom}.
Notice that \eqref{eq:rhscoef} equals $0$ by our convention if $m\leq\ell< n$ and it clearly equals \eqref{eq:lhscoef} if $n\leq \ell\leq n+m$ (recall that we are assuming $n\geq m$). The proof is thus complete.
\end{proof}

\section{Proof of Lemma \ref{lem:errkernel}} % (fold)
\label{sec:pflemerr}

Throughout the proof we will use $c_1$ and $c_2$ to denote positive constants which do not depend on $L$ and whose value may change from line to line.
We will denote by $\|\cdot\|_1$ and $\|\cdot\|_2$ the trace class and Hilbert-Schmidt norms of operators on $L^2(\rr)$. We recall that
\begin{equation}\label{eq:inenorms}
\|AB\|_1\leq\|A\|_2\|B\|_2\qqand \|A\|_2^2=\int dx\,dy\,A(x,y)^2
\end{equation}
if $A$ has integral kernel $A(x,y)$;
for more details see \cite[Section 2]{quastelRem-review} or \cite{simon}.

In view of \eqref{eq:errkernel} we write $\wt\Omega^{(r)}_L=\wt\Omega^{(r,1)}_L+\wt\Omega^{(r,2)}_L$, where
\begin{align}
\wt\Omega^{(r,1)}_L&=e^{L{\sf D}}\KGN {\sf P}_{r\ttsm\cosh(L)}\bigl(e^{-L{\sf D}}-{\sf R}^{(r)}_{[-L,0]}\bigr)\bar{\sf P}_{r}\bigl(e^{-L{\sf D}}-{\sf R}^{(r)}_{[0,L]}\bigr)\bar{\sf P}_{r\ttsm\cosh(L)}e^{L{\sf D}}\KGN,\\
\wt\Omega^{(r,2)}_L&=e^{L{\sf D}}\KGN \bigl(e^{-L{\sf D}}-{\sf R}^{(r)}_{[-L,0]}\bigr)\bar{\sf P}_{r}\bigl(e^{-L{\sf D}}-{\sf R}^{(r)}_{[0,L]}\bigr){\sf P}_{r\ttsm\cosh(L)}e^{L{\sf D}}\KGN.
\end{align}
We will focus on $\wt \Omega^{(r,1)}_L$ and show that it goes to zero in trace norm, the proof for $\wt\Omega^{(r,2)}_L$ is very similar so we will omit it.

We factor $\wt\Omega^{(r,1)}_L$ as
\begin{align}
  \wt\Omega^{(r,1)}_L=\Upsilon_1\Upsilon_2\qquad\text{with}\quad
\Upsilon_1&=e^{L{\sf D}}\KGN {\sf P}_{r\ttsm\cosh(L)}\bigl(e^{-L{\sf D}}-{\sf R}^{(r)}_{[-L,0]})\bar{\sf P}_{r}\\\qand
\Upsilon_2&=\bar{\sf P}_{r}\bigl(e^{-L{\sf D}}-{\sf R}^{(r)}_{[0,L]}\bigr)\bar{\sf P}_{r\ttsm\cosh(L)}e^{L{\sf D}}\KGN.
\end{align}
By \eqref{eq:inenorms}, it is enough to show that $\|\Upsilon_1\|_2\|\Upsilon_2\|_2 \longrightarrow0$ as $L\to \infty$.
We start with $\Upsilon_1$, which is made of two terms which we will bound separately.
By \eqref{eq:inenorms} and the fact that the family $(\varphi_n)_{n\in\nn}$ is orthonormal we have
\begin{align}
\|e^{L{\sf D}}&\KGN {\sf P}_{r\ttsm\cosh(L)}e^{-L{\sf D}}\bar{\sf P}_r\|_2^2
=\sum_{n,n'=0}^{N-1}\int_{-\infty}^\infty dx\int_{-\infty}^r dy\,e^{L(n+n')}\varphi_n(x)\varphi_{n'}(x)\\
&\hspace{1.85in}\times\int_{[r\ttsm\cosh(L),\infty)^2} dzdz'\,\varphi_n(z)\varphi_{n'}(z')e^{-L{\sf D}}(z,y)e^{-L{\sf D}}(z',y)\\
&=\sum_{n=0}^{N-1}e^{2nL}\int_{-\infty}^r dy\left(\int_{r\ttsm\cosh(L)}^\infty dz\,\varphi_n(z)e^{-L{\sf D}}(z,y)\right)^2\\
&\leq Ne^{2(N-1)L}\int_{r\ttsm\cosh(L)}^\infty dz\int_{-\infty}^\infty dy\,(e^{-L{\sf D}}(z,y))^2,
\end{align}
where we have used the Cauchy-Schwarz inequality.
Using the formula for the kernel of $e^{-L{\sf D}}$ which is implicit in \eqref{eq:thetarRpre} and \eqref{eq:thetarR} we see that that the $y$ integral is just a Gaussian integral, and computing it gives
\[\|e^{L{\sf D}}\KGN {\sf P}_{r\ttsm\cosh(L)}e^{-L{\sf D}}\bar{\sf P}_r\|_2^2\leq Ne^{2(N-1)L}\tfrac{\coth(L)-1}{\sqrt{4\pi\coth(L)}}\int_{r\ttsm\cosh(L)}^\infty dz\,e^{2L-z^2\tanh(L)}.\]
The last integral is bounded by $c_1\ts e^{2L-r^2\cosh(L)^2\tanh(L)}$ for all $L>0$, and thus, recalling that we are assuming $r>0$,
\begin{equation}
\|e^{L{\sf D}}\KGN {\sf P}_{r\ttsm\cosh(L)}e^{-L{\sf D}}\bar{\sf P}_r\|_2^2\leq c_1e^{2NL-c_2e^{2L}}.
\end{equation}
for sufficiently large $L$.
The estimate for the other term appearing in $\Upsilon_1$ is very similar and leads to the same type of bound.
We deduce that
\begin{equation}\label{eq:upsilon1}
\|\Upsilon_1\|_2\leq c_1e^{NL-c_2e^{2L}}
\end{equation}
for large enough $L$.
On the other hand it is easy to check that the same calculation as above leads to
\begin{equation}\label{eq:upsilon2}
\|\Upsilon_2\|_2\leq c_1e^{NL}
\end{equation}
(note that in this case the projection ${\sf P}_{r\ttsm\cosh(L)}$ appearing in $\Upsilon_1$ is replaced by $\bar{\sf P}_{r\ttsm\cosh(L)}$; this accounts for the fact that the factor $e^{-c_2e^{2L}}$ disappears from the upper bound).
By combining \eqref{eq:upsilon1} and \eqref{eq:upsilon2} together we immediately get
\begin{equation}
\|\wt\Omega^{(r,1)}_L\|_1\leq c_1\,e^{2NL-c_2e^{2L}}\xrightarrow[L\to \infty]{}0.
\end{equation}

\begin{comment}

\section{Location of the maximal height of non-intersecting Brownian motions}\label{sec:argmaxnibm}

Let $\mathcal{T}_N$ denote the location at which the maximum attended
\begin{equation}\label{eq:TN}
\mathcal{T}_N=\argmax_{t\in[0,1]}B_N(t).
\end{equation}
Our result is an explicit formula for the joint density $f(t,r)$ of $\mathcal{T}_N$ and $\mathcal{M}_N$
\end{comment}

% section section_name (end)

\vs
\paragraph{\bf Acknowledgements}
The authors would like to thank Jeremy Quastel for valuable comments on this manuscript.
They also thank Gregory Schehr for pointing out to us formulas (102)--(103) in \cite{RS2}.
GBN and DR were partially supported by Conicyt Basal-CMM and by Programa Iniciativa Cient\'ifica Milenio grant number NC130062 through Nucleus Millenium Stochastic Models of Complex and Disordered Systems.
DR was also supported by Fondecyt Grant 1120309.

\printbibliography[heading=apa]

\end{document}